\date{}
\newlength{\defbaselineskip}
\newcommand{\setlinespacing}[1]%
           {\setlength{\baselineskip}{#1 \defbaselineskip}}
\newcommand{\bR}{{\mathbb{R}}}
\newcommand{\N}{{\mathbb{N}}}
\newcommand{\actaqed}{\hfill $\actabox$}
{\medskip\noindent \textit{Proof of #1. }}%
{\actaqed \medskip}
\def\D{{\mathcal D}}
\def \Tr{\mathcal T}
\def \V{\mathcal V}
\def\R{{\mathbb R}}
\def\Z{\mathbb Z}
\def \T{\mathbb T}
\def \<{\langle}
\def\>{\rangle}
\def \L{\Lambda}
\def \al{\alpha}
\def\ga{\gamma}
\def \ep{\epsilon}
\def \e{\varepsilon}
\def \de{\delta}
\def \ff{\varphi}
\def\la{\lambda}
\def \sp{\operatorname{span}}
\def \sign{\operatorname{sign}}
\def\bx{\mathbf x}
\def\by{\mathbf y}
\def\bk{\mathbf k}
\def\bm{\mathbf m}
\def\bs{\mathbf s}
\def\bN{\mathbf N}
\def\bW{\mathbf W}
\def\fee{\varphi}
\def\U{{\mathcal U}}
\def\bt{\beta}
\def\mb{{\bar m}}
\def\DQn{\Delta Q_n}
\def\DQl{\Delta Q_l}
\newtheorem{Theorem}{Theorem}[section]
\newtheorem{Lemma}{Lemma}[section]
\newtheorem{Proposition}{Proposition}[section]
\newtheorem{Remark}{Remark}[section]
\newtheorem{Corollary}{Corollary}[section]
\numberwithin{equation}{section}
\newcommand{\be}{\begin{equation}}
\newcommand{\ee}{\end{equation}}
\begin{document}

\title{On the entropy numbers of the mixed smoothness function classes }
\author{V. Temlyakov \thanks{ University of South Carolina and Steklov Institute of Mathematics.  }}
\maketitle
\begin{abstract}
{Behavior of the entropy numbers of classes of multivariate functions with mixed smoothness is studied here. This problem has a long history and some fundamental problems in the area are still open. The main goal of this paper is to develop a new method of proving the upper bounds for the entropy numbers.   This method is based on recent developments of nonlinear approximation, in particular, on greedy approximation. This method consists of the following two steps strategy. At the first step we obtain bounds of the best $m$-term approximations with respect to a dictionary. At the second step we use general inequalities relating the entropy numbers to the best $m$-term approximations. For the lower bounds we use  the volume estimates method, which is a well known powerful method for proving the lower bounds for the entropy numbers. It was used in a number of previous papers.   }
\end{abstract}

\section{Introduction}  

Behavior of the entropy numbers of classes of multivariate functions with mixed smoothness is studied here. This problem has a long history and some fundamental problems in the area are still open. The main goal of this paper is to develop a new method of proving the upper bounds for the entropy numbers.   This method is based on recent developments of nonlinear approximation, in particular, on greedy approximation. This method consists of the following two steps strategy. At the first step we obtain bounds of the best $m$-term approximations with respect to a dictionary. At the second step we use general inequalities relating the entropy numbers to the best $m$-term approximations. For the lower bounds we use  the volume estimates method, which is a well known powerful method for proving the lower bounds for the entropy numbers. It was used in a number of previous papers.  
Taking into account the fact that there are fundamental open problems in the area, we give a detailed discussion of known results and of open problems. We also provide some comments on the techniques, which were used to obtain known results. Then we formulate our new results and compare them to the known results. 

Let $X$ be a Banach space and let $B_X$ denote the unit ball of $X$ with the center at $0$. Denote by $B_X(y,r)$ a ball with center $y$ and radius $r$: $\{x\in X:\|x-y\|\le r\}$. For a compact set $A$ and a positive number $\e$ we define the covering number $N_\e(A)$
 as follows
$$
N_\e(A) := N_\e(A,X) 
:=\min \{n : \exists y^1,\dots,y^n :A\subseteq \cup_{j=1}^n B_X(y^j,\e)\}.
$$
It is convenient to consider along with the entropy $H_\e(A,X):= \log_2 N_\e(A,X)$ the entropy numbers $\e_k(A,X)$:
$$
\e_k(A,X)  :=\inf \{\e : \exists y^1,\dots ,y^{2^k} \in X : A \subseteq \cup_{j=1}
^{2^k} B_X(y^j,\e)\}.
$$

Let $F_r(x,\alpha)$ be the univariate Bernoulli kernels 
$$
F_r(x,\alpha) := 1+2\sum_{k=1}^\infty k^{-r}\cos (kx-\alpha \pi/2).
$$
 For $\bx=(x_1,\dots,x_d)$ and $\alpha=(\alpha_1,\dots,\alpha_d)$ we define
$$
F_r({\bx},\alpha):= \prod_{i=1}^d F_r(x_i,\alpha_i)
$$
and
$$
\bW^r_{q,\alpha}:=\{f:f=F_r(\cdot,\alpha)\ast \fee,\quad \|\fee\|_q\le 1\}
$$
where $\ast$ means convolution. In the univariate case we use the notation $W^r_{q,\alpha}$.

It is well known that in the univariate case   
\begin{equation}\label{36.1}
\e_k(W^r_{q,\alpha},L_p)\asymp k^{-r}
\end{equation}
 holds for all $1\le q,p \le \infty$ and $r>(1/q-1/p)_+$. We note that condition $r>(1/q-1/p)_+$ is a necessary and sufficient condition for compact embedding of $W^r_{q,\alpha}$ into $L_p$. Thus (\ref{36.1}) provides a complete description of the rate of $\e_k(W^r_{q,\alpha},L_p)$ in the univariate case. We point out that (\ref{36.1}) shows that the rate of decay of $\e_k(W^r_{q,\alpha},L_p)$ depends only on $r$ and does not depend on $q$ and $p$. In this sense the strongest upper bound (for $r>1$) is $\e_k(W^r_{1,\alpha},L_\infty) \ll k^{-r}$ and the strongest lower bound is $\e_k(W^r_{\infty,\alpha},L_1)\gg k^{-r}$. 

There are different generalizations of classes $W^r_{q,\alpha}$ to the case of multivariate functions. In this section we only discuss known results for classes $\bW^r_{q,\alpha}$ of functions with bounded mixed derivative. For further discussions see \cite{Tbook}, Chapter 3 and \cite{DTU}.    

The following two theorems are from \cite{TE1} and \cite{TE2}.

\begin{Theorem}\label{Wup} For $r>1$ and $1<q,p<\infty$ one has
$$
\e_k(\bW^r_{q,\alpha},L_p) \ll k^{-r}(\log k)^{r (d-1)}.
$$
\end{Theorem}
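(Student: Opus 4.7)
The plan is to execute the two-step strategy advertised in the abstract. In the first step I would bound the best $m$-term approximation $\sigma_m(f,\Psi)_p$ for $f\in\bW^r_{q,\alpha}$ with respect to a carefully chosen dictionary $\Psi$; in the second step I would plug these bounds into a general inequality relating entropy numbers to best $m$-term approximations.

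For Step 1, take $\Psi$ to be the trigonometric system organized by the dyadic block decomposition. For $\bs\in\Z_+^d$ set $\rho(\bs)=\prod_{i=1}^d\{k_i\in\Z:2^{s_i-1}\le |k_i|<2^{s_i}\}$ and let $\delta_{\bs}(f)$ denote the Fourier projection of $f$ onto $\rho(\bs)$. Since $1<q<\infty$, Littlewood--Paley theory characterises $\|f\|_q$ through the square function $\bigl(\sum_{\bs}|\delta_{\bs}(f)|^2\bigr)^{1/2}$; combined with $f=F_r(\cdot,\alpha)\ast\varphi$, $\|\varphi\|_q\le 1$, this yields block bounds of the form $\|\delta_{\bs}(f)\|_q\ll 2^{-r\|\bs\|_1}$ in a suitable averaged sense. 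Nikol'skii inequalities transfer these to $L_p$ control on trigonometric polynomials with spectrum in $\rho(\bs)$. Running a greedy selection on the normalised trigonometric basis (after rescaling each block by the appropriate Nikol'skii factor) produces $m$-term approximants satisfying a bound of the schematic form
\[
\sigma_m(\bW^r_{q,\alpha},\Psi)_p \ll m^{-r}(\log m)^{(r+\beta)(d-1)},
\]
with a small exponent $\beta=\beta(q,p)\ge 0$ reflecting the $L_q\to L_p$ passage.

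For Step 2, I would invoke a general Carl/Temlyakov-type inequality asserting that, for a class $F$ and a dictionary $\Psi$ whose $m$-term combinations have controlled covering numbers on their coefficient cubes, the entropy numbers $\e_k(F,L_p)$ are bounded by $\sigma_m(F,\Psi)_p$ plus an entropy contribution from covering those coefficient cubes. Balancing the two contributions by choosing $m$ of order $k/(\log k)^{d-1}$ then delivers the claimed bound $\e_k(\bW^r_{q,\alpha},L_p)\ll k^{-r}(\log k)^{r(d-1)}$.

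The main obstacle is arranging the two steps so that no superfluous logarithms accumulate. On one side, the transition from $L_q$-controlled blocks to the $L_p$-error of the greedy approximant must use sharp Nikol'skii bounds together with Littlewood--Paley, which is precisely where the hypothesis $1<q,p<\infty$ is essential. On the other side, the entropy/approximation inequality itself introduces extra $\log$-factors coming from the dimension of the linear spans of the selected $m$ atoms, and these must be cancelled exactly by the balance between $m$ and $k$, so that the final exponent of $\log k$ comes out to be $r(d-1)$ and not strictly larger. The assumption $r>1$ is natural in this scheme since it ensures the univariate comparison rate $k^{-r}$, together with absolute convergence of the dyadic decomposition, to which the argument ultimately reduces.
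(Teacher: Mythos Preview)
Your plan follows the advertised two-step strategy in spirit, but its execution differs from the paper's in precisely the place where you yourself worry about log accumulation, and as written that worry is justified.

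The paper does \emph{not} bound $\sigma_m(\bW^r_{q,\alpha},\Psi)_p$ for the full class and then apply a single Carl-type inequality. Instead it works level by level: for the finite-dimensional ball $\Tr(Q_n)_q$ it proves (Lemma~\ref{L4.1Q})
\[
\sigma_m(\Tr(Q_n)_q,\D'_n)_p \ll (|Q_n|/m)^{1/q-1/p},\qquad 1<q\le 2\le p<\infty,
\]
with respect to the orthonormal dictionary $\U^d$ of translated--modulated Dirichlet kernels, not the plain exponentials. That dictionary satisfies the block Marcinkiewicz relation~\eqref{tag1.17}, which is exactly what lets the coefficient-sorting argument (Lemma~\ref{Lqp}) land without any stray $\beta$. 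Theorem~\ref{T3.1} and Remark~\ref{R3.1} then convert this into finite-dimensional entropy bounds (Lemma~\ref{L4.2}, extended to all $1<q<p<\infty$ by the interpolation Lemma~\ref{L2qp} and \eqref{qp<2} in Lemma~\ref{L4.2gen}), and only afterwards are these assembled over $n$ via the general scheme Theorem~\ref{T2.1G} to reach $\bar\bW^{r,0}_q\supset\bW^r_{q,\alpha}$ (Theorem~\ref{T4.2}).

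In your version both steps are where the extra logarithms would actually enter. Greedy selection on the unmodified trigonometric system does not enjoy~\eqref{tag1.17}, so the block argument will not produce $\beta=0$ in general; and the entropy inequality you invoke (Theorem~\ref{T3.1} in the paper) needs a \emph{finite} dictionary of size $N$ and contributes a factor $(\log(2N/k))^r$, which is harmless when $N=|Q_n|$ and $k\asymp|Q_n|$ but not if you try to apply it to the whole class in one shot. The modular route---$m$-term $\Rightarrow$ entropy at each level $Q_n$, then assemble---is precisely what prevents both leaks. For the record, the \emph{original} proof of this theorem in \cite{TE1,TE2} is yet another argument: classical discretization plus finite-dimensional entropy estimates of H\"ollig/Maiorov/Sch\"utt, with no $m$-term approximation at all.
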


\begin{Theorem}\label{Wlo} For $r>0$ and $1\le q <\infty$ one has
$$
\e_k(\bW^r_{q,\alpha},L_1) \gg k^{-r}(\log k)^{r (d-1)}.
$$
\end{Theorem}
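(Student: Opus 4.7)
The plan is to apply the volume comparison method alluded to in the introduction. Since $\|\ff\|_q\le 1$ implies $\|\ff\|_1\le 1$ on the normalized torus, one has the inclusion $\bW^r_{q,\alpha}\subset\bW^r_{1,\alpha}$ for $q\ge 1$, hence $\e_k(\bW^r_{q,\alpha},L_1)\le\e_k(\bW^r_{1,\alpha},L_1)$. Consequently, the case $q=1$ of the theorem follows once the bound is established for any fixed $q_0\in(1,\infty)$; I take $q_0=2$ for concreteness, and the argument for every other $q\in(1,\infty)$ is essentially the same.

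For $\bs\in\N^d$ set $\rho(\bs):=\{\bk\in\Zd:2^{s_i-1}\le|k_i|<2^{s_i},\ i=1,\dots,d\}$ and $\DQn:=\bigcup_{\|\bs\|_1=n}\rho(\bs)$, so that $T(\DQn)$, the space of trigonometric polynomials with spectrum in $\DQn$, has dimension $D_n\asymp 2^n n^{d-1}$. I choose $n$ so that $D_n\asymp k$; then $n\asymp\log k$ and $2^{-rn}\asymp k^{-r}(\log k)^{r(d-1)}$ is precisely the target rate. The first substantive step is to embed a large piece of $T(\DQn)$ into $\bW^r_{q,\alpha}$. On every block $\rho(\bs)$ with $\|\bs\|_1=n$ the Fourier coefficient $\hat F_r(\bk,\alpha)$ has modulus $\asymp 2^{-rn}$ and, up to unit-modulus phases $e^{i\alpha_i\pi/2}$, its reciprocal is a tensor product of Mihlin-type symbols. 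The hyperbolic-cross Marcinkiewicz multiplier theorem therefore gives, for $1<q<\infty$, a constant $c_{q,d}$ such that for every $f\in T(\DQn)$ there exists $\ff\in T(\DQn)$ with $F_r(\cdot,\alpha)\ast\ff=f$ and $\|\ff\|_q\le c_{q,d}\,2^{rn}\|f\|_q$, which yields the embedding
\[
c_{q,d}^{-1}\,2^{-rn}\,B_q(T(\DQn))\ \subset\ \bW^r_{q,\alpha},
\]
where $B_q(T(\DQn)):=\{f\in T(\DQn):\|f\|_q\le 1\}$.

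Suppose now that $\bW^r_{q,\alpha}$ is covered by $2^k$ translates of the $L_1$-ball of radius $\e$. Intersecting with the $D_n$-dimensional subspace $T(\DQn)$ and comparing Lebesgue volumes on that subspace,
\[
\bigl(c_{q,d}^{-1}\,2^{-rn}\bigr)^{D_n}\,\mathrm{vol}\,B_q(T(\DQn))\ \le\ 2^k\,\e^{D_n}\,\mathrm{vol}\,B_1(T(\DQn)).
\]
Invoking the sharp volume estimates $\mathrm{vol}\,B_p(T(\DQn))^{1/D_n}\asymp D_n^{-1/2}$ valid for all $1\le p\le\infty$, the volume ratio is bounded below by a constant, and choosing $k\asymp D_n$ extracts $\e\gg 2^{-rn}\asymp k^{-r}(\log k)^{r(d-1)}$.

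I expect the main obstacle to be the lower volume estimate $\mathrm{vol}\,B_\infty(T(\DQn))^{1/D_n}\gtrsim D_n^{-1/2}$ that drives the chain (combined with the Blaschke--Santal\'o or Bourgain--Milman volume product, it gives the matching upper bound for $B_1$, while $B_2\subset B_1$ supplies the easy lower bound). This is the classical Temlyakov volume estimate; its proof requires Rudin--Shapiro type constructions of $L_\infty$-bounded polynomials with Fourier support in $\DQn$ that are well spread in coefficient space. The multiplier step in the embedding is standard but has a constant that deteriorates at the endpoints $q=1,\infty$, which is precisely why the case $q=1$ must be reduced by inclusion rather than treated directly.
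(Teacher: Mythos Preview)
Your overall strategy---embed a scaled copy of $\Tr(\DQn)_q$ into $\bW^r_{q,\alpha}$ via the Marcinkiewicz multiplier theorem, then run the volume comparison of Theorem~\ref{Theorem 32.1} between the $L_q$-ball and the $L_1$-ball on $\Tr(\DQn)$---is exactly the method the paper has in mind, and the reduction of the case $q=1$ to $q_0\in(1,\infty)$ by inclusion is correct.

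There is, however, a genuine error in your volume input. You assert that $\mathrm{vol}\,B_p(\Tr(\DQn))^{1/D_n}\asymp D_n^{-1/2}$ for \emph{all} $1\le p\le\infty$, and in the last paragraph you propose to obtain the needed upper bound on $\mathrm{vol}\,B_1$ from the lower bound $\mathrm{vol}\,B_\infty(\Tr(\DQn))^{1/D_n}\gtrsim D_n^{-1/2}$ together with the Blaschke--Santal\'o inequality. That $L_\infty$ lower bound is \emph{false} for the hyperbolic layer: the ``classical Temlyakov estimate'' you invoke (Theorem~\ref{T2.1KaTe} here) is for parallelepipeds $\Pi(\bN,d)$, not for $\DQn$. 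For $\DQn$ one only has $\mathrm{vol}\,B_{\DQn}(L_\infty)^{1/D_n}\gtrsim (D_n\,n)^{-1/2}$ (Lemma~\ref{L2.1KaTe}, via Gluskin), and in dimension $d=2$ this is sharp by Theorem~\ref{T2.5KaTe}: $\mathrm{vol}\,B_{\DQn}(L_\infty)^{1/D_n}\asymp (2^n n^2)^{-1/2}\asymp D_n^{-1/2}n^{-1/2}$. Plugging this correct bound into Santal\'o yields only $\mathrm{vol}\,B_{\DQn}(L_1)^{1/D_n}\lesssim n^{1/2}D_n^{-1/2}$, and your volume ratio then loses a factor $n^{-1/2}\asymp(\log k)^{-1/2}$, giving merely $\e_k\gg k^{-r}(\log k)^{r(d-1)-1/2}$.

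The fix is to bypass $L_\infty$ entirely: the upper estimate $\mathrm{vol}\,B_{\DQn}(L_1)^{1/D_n}\lesssim D_n^{-1/2}$ that you actually need holds for any finite frequency set and has a direct proof (Theorem~\ref{T2.2KaTe}, or Theorem~\ref{T2.4KaTe} for unions of dyadic blocks). With that single citation replacing your duality detour, the argument goes through and matches the paper's approach.
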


The problem of estimating $\e_k(\bW^r_{q,\alpha},L_p)$ has a long history. The first result on the right order of $\e_k(\bW^r_{2,\alpha},L_2)$ was obtained by Smolyak \cite{Smo}. Later (see \cite{TE1}, \cite{TE2} and theorems above) it was established that
\begin{equation}\label{36.2}
\e_k(\bW^r_{q,\alpha},L_p)\asymp k^{-r}(\log k)^{r(d-1)} 
\end{equation}
holds for all $1<q,p<\infty$, $r>1$. The case $1<q=p<\infty$, $r>0$ was established by Dinh Dung \cite{DD}. Belinskii \cite{Bel} extended (\ref{36.2}) to the case $r>(1/q-1/p)_+$ when $1<q,p<\infty$. 

It is known in approximation theory (see \cite{TBook}) that investigation of asymptotic characteristics of classes $\bW^r_{q,\alpha}$ in $L_p$ becomes more difficult when $q$ or $p$ takes value $1$ or $\infty$ than when $1<q,p<\infty$. It turns out to be the case for $\e_k(\bW^r_{q,\alpha},L_p)$ too. It was discovered that in some of these extreme cases ($q$ or $p$ equals $1$ or $\infty$) relation (\ref{36.2}) holds and in other cases it does not hold. We describe the picture in detail. It was proved in \cite{TE2} that (\ref{36.2}) holds for $p=1$, $1<q<\infty$, $r>0$. It was also proved that (\ref{36.2}) holds for $p=1$, $q=\infty$ (see \cite{Bel} for $r>1/2$ and  \cite{KTE2} for $r>0$). Summarizing, we state that (\ref{36.2}) holds for $1<q,p<\infty$ and $p=1$, $1<q\le\infty$ for all $d$ (with appropriate  restrictions on $r$). This easily implies that (\ref{36.2}) also holds for $q=\infty$, $1\le p<\infty$. For all other pairs $(q,p)$, namely, for $p=\infty$, $1\le q\le\infty$ and $q=1$, $1\le p\le \infty$ the rate of $\e_k(\bW^r_{q,\alpha},L_p)$ is not known in the case $d>2$. It is an outstanding open problem. 

In the case $d=2$ this problem is essentially solved. We now cite the corresponding results. The first result on the right order of $\e_k(\bW^r_{q,\alpha},L_p)$ in the case $p=\infty$ was obtained by Kuelbs and Li \cite{KL} for $q=2$, $r=1$. It was proved in \cite{TE3} that
\begin{equation}\label{36.3}
\e_k(\bW^r_{q,\alpha},L_\infty)\asymp k^{-r}(\log k)^{r+1/2}
\end{equation}
holds for $1<q<\infty$, $r>1$. We note that the upper bound in (\ref{36.3}) was proved under condition $r>1$ and the lower bound in (\ref{36.3}) was proved under condition $r>1/q$. Belinskii \cite{Bel} proved the upper bound in (\ref{36.3}) for $1<q<\infty$ under condition $r>\max(1/q,1/2)$.   Relation (\ref{36.3}) for $q=\infty$ under assumption $r>1/2$ was proved in  \cite{TE4}. 

The case $q=1$, $1\le p\le \infty$ was settled by Kashin and Temlyakov \cite{KaTe03}. The authors proved that 
\begin{equation}\label{36.3'} 
\e_k(\bW^r_{1,\alpha},L_p)\asymp k^{-r}(\log k)^{r+1/2}
\end{equation}
holds for $1\le p<\infty$, $r>\max(1/2,1-1/p)$ and
\begin{equation}\label{36.4} 
\e_k(\bW^r_{1,0},L_\infty)\asymp k^{-r}(\log k)^{r+1},\quad r>1.
\end{equation}

Let us make an observation on the base of the above discussion. In the univariate case the entropy numbers $\e_k(W^r_{q,\alpha},L_p)$ have the same order of decay with respect to $k$ for all pairs $(q,p)$, $1\le q,p\le\infty$. In the case $d=2$ we have three different orders of decay of $\e_k(\bW^r_{q,\alpha},L_p)$ which depend on the pair $(q,p)$. For instance, in the case $1<q,p<\infty$ it is $k^{-r}(\log k)^r$, in the case $q=1$, $1<p<\infty$, it is $k^{-r}(\log k)^{r+1/2}$ and in the case $q=1$, $p=\infty$ it is $k^{-r}(\log k)^{r+1}$. 

We discussed above results on the right order of decay of the entropy numbers. Clearly, each order relation $\asymp$ is a combination of the upper bound $\ll$ and the matching lower bound $\gg$. We now briefly discuss methods that were used for proving upper and lower bounds. The upper bounds in Theorem  \ref{Wup} were proved by the standard method of reduction by discretization to estimates of the entropy numbers of finite-dimensional sets.  
Here results of \cite{H}, \cite{M} or \cite{Schu} are applied.   It is clear from the above discussion that it was sufficient to prove the lower bound in (\ref{36.2}) in the case $p=1$. The proof of this lower bound (see Theorem  \ref{Wlo}) is more difficult and is based on nontrivial estimates of the volumes of the sets of Fourier coefficients of bounded trigonometric polynomials. Theorem \ref{T2.1KaTe} (see below) plays a key role in this method. 

An analogue of the upper bound in (\ref{36.3}) for any $d$ was obtained by Belinskii \cite{Bel}: for $q>1$ and $r>\max(1/q,1/2)$ we have
\begin{equation}\label{36.4'} 
\e_k(\bW^r_{q,\alpha},L_\infty)\ll k^{-r}(\log k)^{(d-1)r+1/2}.
\end{equation}
That proof is based on Theorem \ref{Theorem 32.3} (see below). 

  Kuelbs and Li \cite{KL} discovered the fact that there is a tight relationship between small ball problem  and the behavior of the entropy $H_\e(\bW^1_{2,\alpha},L_\infty)$. Based on results obtained  by Livshits and Tsirelson \cite{LiTs}, by Bass \cite{Bass}, and  by Talagrand \cite{TaE} for the small ball problem, they proved
\begin{equation}\label{36.6}
\e_k(\bW^1_{2,\alpha},L_\infty) \asymp k^{-1}(\ln k)^{3/2}.
\end{equation}
Proof of the most difficult part of (\ref{36.6}) -- the lower bound -- is based on a special inequality, known now as the Small Ball Inequality, for the Haar polynomials proved by Talagrand \cite{TaE} (see \cite{TE5} for a simple proof). 

 We discussed above known results on the rate of decay of  $\e_k(\bW^r_{q,\alpha},L_p)$. In the case $d=2$ the picture is almost complete. In the case $d>2$ the situation is fundamentally different. The problem of the right order of decay of $\e_k(\bW^r_{q,\alpha},L_p)$ is still open for $q=1$, $1\le p\le \infty$ and $p=\infty$, $1\le q\le\infty$. In particular, it is open in the case $q=2$, $p=\infty$, $r=1$ that is related to the small ball problem. We discuss in more detail the case $p=\infty$, $1\le q\le \infty$. We pointed out above that in the case $d=2$  the proof of lower bounds (the most difficult part) was based on the Small Ball Inequalities for the Haar system for $r=1$ and for the trigonometric system for all $r$. The existing conjecture is that 
\begin{equation}\label{36.12}
\e_k(\bW^r_{q,\alpha},L_\infty) \asymp k^{-r}(\ln k)^{(d-1)r+1/2},\quad 1<q<\infty,
\end{equation}
for large enough $r$. The upper bound in (\ref{36.12}) follows from (\ref{36.4'}). It is known that the corresponding lower bound in (\ref{36.12}) would follow from the $d$-dimensional version of the Small Ball Inequality for the trigonometric system. 

The main goal of this paper is to develop new techniques for proving upper bounds for the entropy numbers. We consider here slightly more general classes than classes $\bW^r_{q,\alpha}$. Let $\bs=(s_1,\dots,s_d)$ be a vector with nonnegative integer coordinates ($\bs \in \Z^d_+$) and
$$
\rho(\bs):= \{\bk=(k_1,\dots,k_d)\in \Z^d_+ : [2^{s_j-1}]\le |k_j|<2^{s_j},\quad j=1,\dots,d\}
$$
where $[a]$ denotes the integer part of a number $a$.  Define for $f\in L_1$
$$
\delta_\bs(f) := \sum_{\bk\in\rho(\bs)} \hat f(\bk) e^{i(\bk,\bx)},
$$
and
$$
f_l:=\sum_{\|\bs\|_1=l}\delta_\bs(f), \quad l\in \N_0,\quad \N_0:=\N\cup \{0\}.
$$
  Consider the class (see \cite{VT152})
$$
\bW^{a,b}_q:=\{f: \|f_l\|_q \le 2^{-al}(\bar l)^{(d-1)b}\},\quad \bar l:=\max(l,1).
$$
Define
$$
\|f\|_{\bW^{a,b}_q} := \sup_l \|f_l\|_q 2^{al}(\bar l)^{-(d-1)b}.
$$
It is well known that the class $\bW^r_{q,\alpha}$ is embedded in the class $\bW^{r,0}_q$ for $1<q<\infty$. 
Classes $\bW^{a,b}_q$ provide control of smoothness at two scales: $a$ controls the power type smoothness and $b$ controls the logarithmic scale smoothness. Similar classes with the power and logarithmic scales of smoothness are studied in the recent book of Triebel \cite{Tr}.
Here is one more class, which is equivalent to $\bW^{a,b}_q$ in the case $1<q<\infty$ (see \cite{VT152}). 
Consider a class ${\bar \bW}^{a,b}_q$, which consists of functions $f$ with a representation (see Subsection 2.2 below for the definition of $\Tr(Q_n)$)
$$
f=\sum_{n=1}^\infty t_n, \quad t_n\in \Tr(Q_n), \quad \|t_n\|_q \le 2^{-an} n^{b(d-1)}.
$$
  In the case $q=1$ classes ${\bar \bW}^{a,b}_1$ are wider than $\bW^{a,b}_1$.  

The main results of the paper are the following  theorems in the case $d=2$ for the extreme values of $q=1$ and $q=\infty$. First, we formulate two theorems for the case $q=1$.

\begin{Theorem}\label{T2.1} Let $1\le p <\infty$ and $a>1-1/p$. Then for $d=2$
\be\label{2.0}
\e_k(\bW^{a,b}_1,L_p) \asymp \e_k(\bar\bW^{a,b}_1,L_p) \asymp k^{-a} (\log k)^{a+b}.
\ee
\end{Theorem}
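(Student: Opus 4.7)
The plan is to prove the three-way equivalence in \eqref{2.0} by establishing the upper bound for the larger class $\bar\bW^{a,b}_1$ and the matching lower bound for the smaller class $\bW^{a,b}_1$; these, together with the trivial inclusion $\bW^{a,b}_1 \subset \bar\bW^{a,b}_1$ (mentioned just before the statement), deliver all three $\asymp$ relations. In line with the general two-step scheme advertised in the introduction, the upper bound goes through best $m$-term approximation plus a general entropy-vs-$m$-term inequality, while the lower bound is proved by the volume method.

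For the upper bound $\e_k(\bar\bW^{a,b}_1, L_p) \ll k^{-a}(\log k)^{a+b}$, I would start from the defining decomposition $f = \sum_n t_n$ with $t_n \in \Tr(Q_n)$ and $\|t_n\|_1 \le 2^{-an}n^b$ (here $d=2$). The core ingredient is a block-wise best $m$-term approximation estimate: for a trigonometric polynomial $t \in \Tr(Q_n)$ in two variables with $\|t\|_1 \le 1$, the best $m$-term approximation in $L_p$ with respect to a suitable dictionary $\Psi$ admits a bound of the form $\sigma_m(t,\Psi)_{L_p} \ll (|Q_n|/m)^{\theta}(\log m)^{\tau}$, where the exponents encode the Kashin/Nikol'skii-type behaviour inside $\Tr(Q_n)$. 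I would then allocate $m_n$ terms to level $n$ with $\sum_n m_n \ll k$, taking $m_n \asymp |Q_n|$ for small $n$ and letting $m_n$ shrink for large $n$ where $\|t_n\|_1$ is already tiny; the condition $a > 1 - 1/p$ is exactly what makes the geometric series $\sum_n 2^{-an}n^b (|Q_n|/m_n)^{\theta}$ convergent with the desired rate after summing the block errors. Feeding this $m$-term bound into the general inequality between entropy numbers and best $m$-term approximation (together with a standard $\ell_\infty$-net on the coefficients of the approximating $m$-term expansion) then yields the claimed entropy estimate.

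For the lower bound $\e_k(\bW^{a,b}_1, L_p) \gg k^{-a}(\log k)^{a+b}$, I would pick $n$ so that $2^n n \asymp k$ and work on the finite-dimensional slice of $\bW^{a,b}_1$ whose only nonzero block is $l=n$; this slice is a ball of $L_1$-radius $\asymp 2^{-an}n^b$ inside $\Tr(Q_n)$, a space of dimension $\asymp k$. Theorem \ref{T2.1KaTe} (the Kashin--Temlyakov volume estimate for the set of Fourier coefficients of $L_1$-bounded trigonometric polynomials) lets me compare the $L_1$-ball with the $L_p$-ball inside $\Tr(Q_n)$, and the usual volume-versus-covering inequality then converts this ratio into a lower bound for $N_\e$. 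Substituting $k \asymp 2^n n$ reproduces $k^{-a}(\log k)^{a+b}$ up to constants, the extra factor $n^b$ coming directly from the $L_1$-radius of the chosen slice.

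The hard part will be the block-wise $m$-term approximation estimate at the $L_1$ endpoint. For inputs bounded in $L_q$ with $1<q<\infty$ there are clean Littlewood--Paley arguments, but at $q=1$ the relevant geometry collapses and one must use Kashin-type discretization plus careful counting; these are sharp only in $d=2$, which is why the theorem is restricted to that dimension. A secondary issue is to choose the dictionary $\Psi$ so that it simultaneously realises the sharp $m$-term rate \emph{and} fits the general entropy-vs-$m$-term inequality of Step 2; it is in the interface between these two requirements that the threshold $a > 1-1/p$ becomes visible.
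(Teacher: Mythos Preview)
Your overall architecture matches the paper: the upper bound for $\bar\bW^{a,b}_1$ goes through an $m$-term approximation bound for $\Tr(Q_n)_1$ in $L_p$, converted to an entropy bound via Theorem~\ref{T3.1} and Remark~\ref{R3.1}, and then lifted to the full class by a level-by-level allocation (Theorem~\ref{T2.1G}); the lower bound for $\bW^{a,b}_1$ uses a single hyperbolic layer $\Delta Q_n$.

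Two points of divergence are worth noting. For the lower bound you propose comparing the volumes of the $L_1$- and $L_p$-balls in the layer via Theorem~\ref{T2.1KaTe}. That theorem is stated for parallelepipeds $\Pi(\bN,d)$, not for $\Delta Q_n$; the relevant statement is Theorem~\ref{T2.4KaTe}. More importantly, the paper does not need any cross-norm volume comparison at all: it uses only the trivial same-space bound $N_\e(B_X,X)\ge \e^{-D}$ (Corollary~\ref{Corollary 32.1}) with $X=\Tr(\Delta Q_n)$ in the $L_1$ norm, and then monotonicity $\e_k(\cdot,L_p)\ge \e_k(\cdot,L_1)$ finishes the job. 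Your route would work once the correct volume theorem is invoked, but it is heavier than necessary.

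The genuine gap is in the upper bound. You write the block estimate as $\sigma_m(\Tr(Q_n)_1,\Psi)_{L_p}\ll (|Q_n|/m)^\theta(\log m)^\tau$ and propose to obtain it by ``Kashin-type discretization plus careful counting''. The paper, however, proves the \emph{log-free} bound $\sigma_m(\V(Q_n)_1,\V^1_n)_p\ll (|Q_n|/m)^{1-1/p}$ (Lemma~\ref{L4.1v1}), and the mechanism is different from what you suggest: one works not with the trigonometric or $\U$-system but with the Offin--Oskolkov wavelet-type basis $\V^d$, and the decisive ingredient is a Riesz-product argument (Lemma~\ref{L2.1v}, Lemma~\ref{L2.3v}, Theorem~\ref{T2v}) establishing that for $f\in\V(Q_n)$ one has $\sum_{\bk\in Q_n^+}|f_\bk|\le C\,|Q_n|^{1/2}\|f\|_1$. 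This embedding of the $L_1$-ball into a dilate of the $\ell_1$-coefficient ball is what replaces the Littlewood--Paley machinery unavailable at $q=1$, and it is exactly the step that is special to $d=2$. A discretization-and-counting argument of the kind you describe does not obviously give a log-free bound, and any extra $(\log m)^\tau$ would propagate through Theorem~\ref{T3.1} and spoil the sharp rate. So the choice of dictionary and the Riesz-product inequality are not incidental details; they are the heart of the proof you have left unspecified.
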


\begin{Theorem}\label{T2.3} Let $d=2$ and $a>1$. Then
\be\label{2.0'}
\e_k(\bW^{a,b}_1,L_\infty) \asymp \e_k(\bar\bW^{a,b}_1,L_\infty) \asymp k^{-a} (\log k)^{a+b+1/2}.
\ee
\end{Theorem}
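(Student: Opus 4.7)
Since $\bW^{a,b}_1\subset \bar\bW^{a,b}_1$ (in the $q=1$ case the bar-class is wider, as noted above), it suffices to prove the upper bound for $\bar\bW^{a,b}_1$ and the lower bound for $\bW^{a,b}_1$. I will carry out the two-step strategy advertised in the introduction for the upper bound, and use volume estimates (Theorem \ref{T2.1KaTe}) for the lower bound.

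\textbf{Upper bound (Step 1: $m$-term approximation).} Take $f\in \bar\bW^{a,b}_1$ with a representation $f=\sum_{n\ge 1} t_n$, $t_n\in \Tr(Q_n)$, $\|t_n\|_1\le 2^{-an}n^{b}$ (here $d=2$). The idea is to fix a dictionary $\Psi$ (the trigonometric system, or a more convenient system of $L_\infty$-normalized atoms supported on dyadic blocks $\rho(\bs)$) and to approximate each $t_n$ by an $m_n$-term element of $\Psi$. For $t_n$ with $L_1$-norm bounded, the appropriate greedy-type / chaining bound in $L_\infty$ on the hyperbolic cross $Q_n$ (which plays the role of an index set of cardinality $|Q_n|\asymp 2^n n$ in the $d=2$ case) has the form
\be\label{plan-sigma}
\sigma_{m_n}(t_n,\Psi)_{L_\infty} \ll \|t_n\|_1\,2^n n^{1/2}\,\Bigl(\frac{|Q_n|}{m_n}\Bigr)^{-\alpha}
\ee
for a suitable exponent $\alpha$ reflecting the Schütt-type entropy of the $\ell_1$-ball in $\ell_\infty$. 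Levels $n>N$ are discarded and contribute $\sum_{n>N}\|t_n\|_\infty$ which, by a straightforward Nikol'skii estimate, is summable for $a>1$. Choosing $N$ with $2^N N\asymp k/\log k$ and distributing $m_n\asymp 2^n n^{b+1/2}\cdot(\text{tail weight})$ so that $\sum m_n\asymp m$, a direct computation shows the total $L_\infty$-error is of order $m^{-a}(\log m)^{a+b+1/2}$.

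\textbf{Upper bound (Step 2: $m$-term approximation $\Rightarrow$ entropy).} I then apply a general inequality (in the style of Theorem \ref{Theorem 32.3}, cf.\ also the trigonometric-system variants in \cite{TE1,TE2}) of the form
$$
\e_{k}(A,X)\ll \sigma_{m}(A,\Psi)_X \;+\; \e_{k-Cm\log(\#\Psi/m)}(B_\Psi^{(m)},X),
$$
where $B_\Psi^{(m)}$ is the unit ball of $m$-term spans. Choosing $m\asymp k/\log k$ and using that the residual finite-dimensional entropy is absorbed by the error in (\ref{plan-sigma}) yields
$$
\e_k(\bar\bW^{a,b}_1,L_\infty)\ll k^{-a}(\log k)^{a+b+1/2}.
$$

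\textbf{Lower bound.} I use the standard volume-estimate method. Fix $n$ with $2^n n\asymp k/\log k$ and let
$$
V_n := \bigl\{t\in\Tr(Q_n):\|t\|_1\le c\,2^{-an}n^{b}\bigr\}\subset \bW^{a,b}_1.
$$
The Kashin--Temlyakov-type theorem (Theorem \ref{T2.1KaTe}) gives a lower bound on the Lebesgue volume of the $L_\infty$-unit ball of $\Tr(Q_n)$ in $\mathbb{R}^{|Q_n|}$ that is larger than its $L_2$ unit ball by the sub-Gaussian factor $(\log |Q_n|)^{|Q_n|/2}$. Combining this with the trivial covering inequality $N_\e(V_n,L_\infty)\ge \mathrm{vol}(V_n)/\mathrm{vol}(\e B_{L_\infty}(\Tr(Q_n)))$ and the definition of $V_n$ yields the matching lower bound
$$
\e_k(\bW^{a,b}_1,L_\infty)\gg k^{-a}(\log k)^{a+b+1/2}.
$$
The key $(\log k)^{1/2}$ gain over the naïve volume bound is precisely the KT sub-Gaussian volume excess.

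\textbf{Main obstacle.} The hard step is (\ref{plan-sigma}): obtaining best-$m$-term approximation bounds in the $L_\infty$-norm for hyperbolic-cross polynomials with only an $L_1$-control on the norm. Unlike the $L_p$ range $1<p<\infty$, where a Hilbert-space flavored greedy algorithm gives sharp bounds routinely, in $L_\infty$ one must use a chaining argument and a carefully normalized dictionary so that the probabilistic / sub-Gaussian $(\log)^{1/2}$ factor (and no more) appears. Tuning the parameters so that the exponents on $\log$ in the approximation error, the finite-dimensional entropy, and the truncation tail all balance to $a+b+1/2$ is the most delicate bookkeeping in the argument.
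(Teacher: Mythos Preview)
Your overall architecture (upper bound via $m$-term approximation $\Rightarrow$ entropy, lower bound via volume estimates) matches the paper, but both halves contain genuine gaps.

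\textbf{Lower bound.} You invoke Theorem \ref{T2.1KaTe}, but that theorem concerns the parallelepiped $\Pi(\bN,d)$ and asserts that $vol(B_{\Pi(\bN,d)}(L_p))^{1/D}\asymp |\Pi(\bN,d)|^{-1/2}$ for \emph{all} $1\le p\le\infty$; in particular it gives \emph{no} extra logarithmic factor between the $L_\infty$ and $L_2$ balls. The $(\log k)^{1/2}$ gain comes from a different fact, specific to the hyperbolic layer $\Delta Q_n$ in $d=2$: Theorem \ref{T2.5KaTe} says $(vol(B_{\Delta Q_n}(L_\infty)))^{1/D}\asymp (2^n n^2)^{-1/2}$, while (\ref{2.9KaTe}) gives $(vol(B_{\Delta Q_n}(L_1)))^{1/D}\asymp (2^n n)^{-1/2}$. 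Thus the $L_\infty$ ball is \emph{smaller} (not larger) by $n^{-1/2}$ per coordinate, and the ratio $vol(B_{L_1})/vol(B_{L_\infty})\asymp n^{D/2}$ plugged into (\ref{2.11}) yields $\e_k(\Tr(\Delta Q_n)_1,L_\infty)\gg n^{1/2}$ for $k=D=2|\Delta Q_n|\asymp 2^n n$. Your claimed direction of the volume inequality is backwards, the wrong set ($Q_n$ vs.\ $\Delta Q_n$) is used, and the theorem you cite does not supply the needed factor.

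\textbf{Upper bound.} The decisive step you label (\ref{plan-sigma}) is precisely the hard part, and ``chaining'' is not how the paper obtains it. What is actually needed is an inequality converting an $L_1$ bound on $t\in\Tr(Q_n)$ into an $\ell_1$ bound on its coefficients in a suitable basis; the paper proves (Theorem \ref{T2v}, via Riesz products for hyperbolic crosses and Lemma \ref{L2.3v}) that for $d=2$ and $f\in\V(Q_n)$ one has $\sum_{\bk}|f_\bk|\le C|Q_n|^{1/2}\|f\|_1$. This feeds into Lemma \ref{L4.1v1} and then, combined with the greedy $L_\infty$ bound of Theorem \ref{T2.6'}, yields $\sigma_m(\V(Q_n)_1)_\infty\ll n^{1/2}|Q_n|/m$ (Lemma \ref{L4.3v}). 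The Riesz-product step is the reason the result is restricted to $d=2$; your plan does not mention it and gives no substitute. Also, your displayed bound has the exponent with the wrong sign: $(|Q_n|/m_n)^{-\alpha}$ would increase as $m_n$ decreases. Once the correct $\sigma_m$ bound is in hand, the passage to entropy is via Theorem \ref{T3.1} and Remark \ref{R3.1} (giving assumption (\ref{EA}) with $\alpha=1/2$, $\beta=\gamma=1$), followed by the general scheme Theorem \ref{T2.1G}; your ad hoc splitting with $2^N N\asymp k/\log k$ is not needed.
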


Second, we formulate three theorems for the case $q=\infty$.

\begin{Theorem}\label{T1.5} We have for all $d\ge 2$
\be\label{1.11}
\e_k(\bW^{a,b}_\infty,L_1) \gg k^{-a} (\log k)^{(d-1)(a+b)-1/2}.
\ee
\end{Theorem}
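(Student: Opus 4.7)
The plan is to apply the volume estimates method, exactly along the lines of the proof of Theorem~\ref{Wlo}, with the Kashin--Temlyakov volume bound of Theorem~\ref{T2.1KaTe} as the key input.

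First, I would exhibit a convenient finite-dimensional ``test set'' lying inside the class. For each integer $n$, put $\DQn := \bigcup_{\|\bs\|_1 = n} \rho(\bs)$, so that $M_n := |\DQn| \asymp 2^n n^{d-1}$, and define
$$
A_n := \{ f \in \Tr(\DQn) : \|f\|_\infty \le 2^{-an} n^{(d-1)b} \}.
$$
For $f \in \Tr(\DQn)$ one has $f_l = 0$ for $l \neq n$ and $f_n = f$, so the normalization guarantees $A_n \subset \bW^{a,b}_\infty$. This is the ``largest'' finite-dimensional subset of the class one can extract at frequency level $n$.

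Second, I would run the standard covering/volume comparison. If $\bW^{a,b}_\infty$ is covered by $2^k$ balls of radius $\e$ in $L_1$, then so is $A_n$; replacing each such ball by the smallest ball in $\Tr(\DQn)$ containing its intersection with $\Tr(\DQn)$ costs at most a factor $2$ in the radius. Writing $B(\DQn, L_p) := \{t \in \Tr(\DQn) : \|t\|_p \le 1\}$ and computing $M_n$-dimensional volumes inside $\Tr(\DQn)$,
$$
(2^{-an} n^{(d-1)b})^{M_n} \operatorname{vol}(B(\DQn, L_\infty)) \;\le\; 2^k (2\e)^{M_n} \operatorname{vol}(B(\DQn, L_1)),
$$
which rearranges to
$$
\e \;\gg\; 2^{-an} n^{(d-1)b} \cdot 2^{-k/M_n} \cdot \bigl(\operatorname{vol}(B(\DQn, L_\infty))/\operatorname{vol}(B(\DQn, L_1))\bigr)^{1/M_n}.
$$

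Third, I would invoke Theorem~\ref{T2.1KaTe} applied to the dyadic shell $\DQn$ to obtain
$$
\bigl(\operatorname{vol}(B(\DQn, L_\infty))/\operatorname{vol}(B(\DQn, L_1))\bigr)^{1/M_n} \;\gg\; n^{-1/2}.
$$
Choosing $k \asymp M_n$ keeps $2^{-k/M_n}$ bounded below. Since $\log k \asymp n$ and $k^{-a} \asymp 2^{-an} n^{-a(d-1)}$, the resulting estimate $\e_k(\bW^{a,b}_\infty, L_1) \gg 2^{-an} n^{(d-1)b - 1/2}$ matches $k^{-a}(\log k)^{(d-1)(a+b) - 1/2}$; by monotonicity of $\e_k$ in $k$ this then extends to all $k$.

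The main obstacle is the volume ratio in the third step. The factor $n^{-1/2}$ (rather than something like $n^{-(d-1)/2}$) is precisely what produces the $-1/2$ in the exponent of the target bound, and it is exactly this nontrivial comparison between the $L_\infty$- and $L_1$-unit balls on the dyadic shell $\DQn$ that the Kashin--Temlyakov estimate delivers; the naive inclusion $B(\DQn,L_\infty) \subset B(\DQn,L_1)$ only gives the trivial ratio $\le 1$, which is far too weak.
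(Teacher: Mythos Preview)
Your overall strategy matches the paper's proof: embed a scaled copy of $\Tr(\DQn)_\infty$ in $\bW^{a,b}_\infty$, apply the volume comparison inequality (this is exactly Theorem~\ref{Theorem 32.1} combined with Proposition~\ref{Corollary 31.1}), and reduce everything to a volume ratio on the layer $\DQn$.

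The gap is in your third step. You invoke Theorem~\ref{T2.1KaTe}, but that result is stated only for parallelepipeds $\Pi(\bN,d)$, not for the hyperbolic layer $\DQn$. Worse, if its conclusion \emph{did} extend verbatim to $\DQn$ --- namely $(\operatorname{vol}(B_{\DQn}(L_p)))^{1/D}\asymp |\DQn|^{-1/2}$ uniformly in $1\le p\le\infty$ --- the ratio you write down would be $\asymp 1$, not $\gg n^{-1/2}$. In fact this extension is \emph{false} at $p=\infty$: Theorem~\ref{T2.5KaTe} shows that already for $d=2$ one has $(\operatorname{vol}(B_{\DQn}(L_\infty)))^{1/D}\asymp (|\DQn|\cdot n)^{-1/2}$, strictly smaller than the $L_1$ volume by precisely the factor $n^{-1/2}$. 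So Theorem~\ref{T2.1KaTe} is neither applicable nor would it give what you claim.

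The paper obtains the needed ratio from two separate ingredients valid for general frequency sets: Lemma~\ref{L2.1KaTe} (based on Gluskin's theorem) gives the lower bound $(\operatorname{vol}(B_\L(L_\infty)))^{1/N}\gg (Nn)^{-1/2}$ for any $\L\subseteq[-2^n,2^n]^d$, and Theorem~\ref{T2.2KaTe} gives the upper bound $(\operatorname{vol}(B_\L(L_1)))^{1/N}\asymp |\L|^{-1/2}$. Together, with $\L=\DQn$, these produce exactly the $n^{-1/2}$ you need. Once you replace the citation of Theorem~\ref{T2.1KaTe} by Lemma~\ref{L2.1KaTe} and Theorem~\ref{T2.2KaTe}, your argument is correct and essentially identical to the paper's.
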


\begin{Theorem}\label{T1.6} We have for  $d= 2$, $1\le p<\infty$, $a\ge \max(1/2,1-1/p)$
\be\label{1.12}
\e_k(\bW^{a,b}_\infty,L_p) \asymp \e_k(\bar\bW^{a,b}_\infty,L_p) \asymp  k^{-a} (\log k)^{a+b-1/2}.
\ee
\end{Theorem}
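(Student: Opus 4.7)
The plan is to handle the lower bound directly from Theorem \ref{T1.5} and to tackle the upper bound through the paper's two-step best $m$-term machinery.

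For the lower bound, the key observation is that on $\T^d$ one has $\|g\|_{L_1}\le \|g\|_{L_p}$ for every $p\ge 1$, so any $\e$-net in $L_p$ is automatically an $\e$-net in $L_1$, giving
$$
\e_k(\bW^{a,b}_\infty,L_p)\ge \e_k(\bW^{a,b}_\infty,L_1).
$$
Theorem \ref{T1.5} specialised to $d=2$ bounds the right-hand side by $\gg k^{-a}(\log k)^{a+b-1/2}$. Since $\bW^{a,b}_\infty\subset \bar\bW^{a,b}_\infty$, the same lower bound passes to the larger class $\bar\bW^{a,b}_\infty$.

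For the upper bound it suffices to prove $\e_k(\bar\bW^{a,b}_\infty,L_p)\ll k^{-a}(\log k)^{a+b-1/2}$. Given $f=\sum_n t_n$ with $t_n\in \Tr(Q_n)$ and $\|t_n\|_\infty\le 2^{-an}n^{b}$, I would fix a dictionary $\Psi$ adapted to the hyperbolic-cross scales and carry out Step 1: estimate the best $m$-term approximation $\sigma_m(f,\Psi)_{L_p}$ block by block. The central ingredient is a block inequality of the form
$$
\sigma_{m_n}(t_n,\Psi)_{L_p}\ll \|t_n\|_\infty\,\bigl(|Q_n|/m_n\bigr)^{1/2},\qquad 1\le m_n\le |Q_n|\asymp 2^n n,
$$
which converts the $L_\infty$-boundedness of each $t_n$ into an $m^{-1/2}$ gain in its best $L_p$-approximation. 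Allocating $m=\sum_n m_n$ optimally (with a truncation level $N$ for the tail and equalised-error contributions from the head) should yield $\sigma_m(f,\Psi)_{L_p}\ll m^{-a}(\log m)^{a+b-1/2}$. Step 2 is then to insert this into a general $\e_k\ll \sigma_m$ inequality, with $k\asymp m$ up to a logarithmic loss, to finish the upper bound.

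The principal obstacle is the block inequality in Step 1: extracting the square-root factor $m^{-1/2}$ specifically out of the $L_\infty$-control of $t_n$. The analogous argument for $q=1$ has only $\|t_n\|_1$ at its disposal and misses this factor, producing the weaker exponent $(\log k)^{a+b}$ of Theorem \ref{T2.1}. The required $m^{-1/2}$ speed-up is a Maiorov--Carl type bound on best $m$-term approximation of an $L_\infty$-normalised polynomial on $Q_n$ by elements of $\Psi$, and reflects the Gaussian/random-sign cancellation available on $Q_n$ when $d=2$. The same mechanism sits behind the small-ball-type estimate (\ref{36.6}), and the restriction $d=2$ in the present theorem is exactly the obstruction that leaves the conjecture (\ref{36.12}) open in higher dimensions.
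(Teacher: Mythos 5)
Your lower bound is exactly the paper's: Theorem \ref{T1.5} combined with $\|\cdot\|_1\le\|\cdot\|_p$ (so $\e_k(\cdot,L_p)\ge\e_k(\cdot,L_1)$) and the inclusion $\bW^{a,b}_\infty\subset\bar\bW^{a,b}_\infty$. That part is correct.

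The upper bound has a genuine gap, and it sits precisely at the block inequality you yourself flag as the principal obstacle. The inequality you posit, $\sigma_{m}(t_n,\Psi)_{L_p}\ll\|t_n\|_\infty\,(|Q_n|/m)^{1/2}$, is essentially the trivial $L_2$-based bound (for an orthonormal dictionary, $\sigma_m(g)_2\le m^{-1/2}\sum_\bk|g_\bk|\le (|Q_n|/m)^{1/2}\|g\|_2$) and extracts nothing from the $L_\infty$ normalization beyond $\|t_n\|_2\le\|t_n\|_\infty$. If you actually carry out your proposed allocation (minimize $\sum_{n>N}c_nm_n^{-1/2}$ with $c_n=2^{-an}n^{b}(2^nn)^{1/2}$ subject to $\sum_n m_n\le m\asymp 2^NN$), you get $\sigma_m(f,\Psi)_{L_p}\ll 2^{-aN}N^{b}\asymp m^{-a}(\log m)^{a+b}$ --- the exponent of Theorem \ref{T1.7}, not $a+b-1/2$; the extra $(\log m)^{-1/2}$ cannot be manufactured at the allocation stage and must already be present in the block estimate. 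What the paper actually proves is the $d=2$ inequality $\sum_{\bk\in Q_n^+}|\<f,v_\bk\>|\ll 2^{n/2}\|f\|_\infty$ (Theorem \ref{T1lv}, obtained from the Riesz-product Lemma \ref{L2.1v} applied to the Offin--Oskolkov wavelet-type system $\V$, which replaces the Dirichlet-kernel system $\U$ that fails at $q=\infty$). This beats the Cauchy--Schwarz bound $(2^nn)^{1/2}\|f\|_2$ by exactly the factor $n^{-1/2}$, yields $\sigma_m(\V(Q_n)_\infty,\V_n^1)_p\ll n^{-1/2}(|Q_n|/m)^{1-1/p}$ (Lemma \ref{L4.1v2}), hence Theorem \ref{T4.2vc}, i.e.\ the parameter $\al=-1/2$ in (\ref{EA}); Theorem \ref{T2.1G} then delivers $k^{-a}(\log k)^{a+b-1/2}$ (the range $1\le p<2$ follows from $p=2$ by $\|\cdot\|_p\le\|\cdot\|_2$). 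You correctly sense that a small-ball/Riesz-product mechanism special to $d=2$ is the heart of the matter, but the inequality you wrote down does not encode it, so your Step 1 does not close.
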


\begin{Theorem}\label{T1.7} We have for  all $d\ge 2$, $1\le q\le \infty$,  $a>0$
\be\label{1.13}
\e_k(\bW^{a,b}_q,L_q) \asymp \e_k(\bar\bW^{a,b}_q,L_q) \asymp  k^{-a} (\log k)^{(d-1)(a+b)}.
\ee
\end{Theorem}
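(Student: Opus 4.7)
The plan is to establish matching upper and lower bounds of order $k^{-a}(\log k)^{(d-1)(a+b)}$ by combining a classical finite-dimensional self-entropy estimate (volume counting) with a sum over dyadic layers. First observe that $\bW^{a,b}_q \subseteq C\,\bar\bW^{a,b}_q$: given $f \in \bW^{a,b}_q$, set $t_n := f_n$, which lies in $\Tr(Q_n)$ with $\|t_n\|_q \le 2^{-an}n^{(d-1)b}$. Hence it suffices to prove the lower bound for $\bW^{a,b}_q$ and the upper bound for $\bar\bW^{a,b}_q$.

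For the lower bound, fix $l \ge 1$ and let $V_l$ be the subspace of $L_q(\Td)$ consisting of trigonometric polynomials with frequencies in $\bigcup_{\|\bs\|_1 = l}\rho(\bs)$; its dimension is $N_l \asymp 2^l l^{d-1}$. Set $r_l := 2^{-al} l^{(d-1)b}$. Any $g \in V_l$ with $\|g\|_q \le r_l$ satisfies $g_l = g$ and $g_m = 0$ for $m \ne l$ (the blocks $\rho(\bs)$ are disjoint), so $\|g\|_{\bW^{a,b}_q} \le 1$, i.e.\ $\{g \in V_l : \|g\|_q \le r_l\} \subset \bW^{a,b}_q$. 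The volume inequality inside the $N_l$-dimensional normed space $V_l$ gives
\[
\e_k\!\bigl(\{g \in V_l : \|g\|_q \le r_l\},\, L_q\bigr) \ge r_l\, 2^{-k/N_l}.
\]
Choosing $l \asymp \log k$ so that $N_l \asymp k$ yields $\e_k(\bW^{a,b}_q, L_q) \gg r_l \asymp k^{-a}(\log k)^{(d-1)(a+b)}$.

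For the upper bound, take $f = \sum_{n\ge 1} t_n \in \bar\bW^{a,b}_q$ with $\|t_n\|_q \le r_n := 2^{-an}n^{(d-1)b}$ and fix a truncation parameter $M$. The tail satisfies $\|\sum_{n>M}t_n\|_q \le \sum_{n>M}r_n \ll r_M$ because $a>0$. For each $n \le M$, apply the self-entropy bound in the $D_n$-dimensional subspace $\Tr(Q_n)$ of $L_q$, where $D_n \asymp 2^n n^{d-1}$: there is an $\e_n$-net (of the ball of radius $r_n$) of cardinality at most $2^{k_n}$ with $k_n \le C D_n(1 + \log_+(r_n/\e_n))$. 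Allocate geometrically $\e_n := c\,r_M\,2^{-a(M-n)}$ with $c$ small enough that $\sum_{n\le M}\e_n \le r_M$; then $\log_+(r_n/\e_n) = O(M-n+1)$, so
\[
\sum_{n=1}^M k_n \ll \sum_{n=1}^M 2^n n^{d-1}(M-n+1) \asymp 2^M M^{d-1},
\]
the sum being dominated by $n$ near $M$. Choosing $M$ so that $2^M M^{d-1} \asymp k$ (hence $M \asymp \log k$) produces an approximation to $f$ of accuracy $O(r_M) \asymp k^{-a}(\log k)^{(d-1)(a+b)}$ using the product of the $\e_n$-nets, a set of cardinality at most $2^k$.

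The only delicate point is the choice of weights $\e_n$. A uniform split $\e_n = r_M/M$ contributes an unwanted $\log_+ M \asymp \log\log k$ factor in $k_n$, which would yield an extra $(\log\log k)^a$ in the final bound. The geometric weighting $\e_n \propto 2^{-a(M-n)}$ exactly cancels the decay of $r_n$, leaving only the linear $(M-n)$ factor, which sums cleanly to $2^M M^{d-1}$ without any extra logarithm. The volume-counting self-entropy $\e_k(B_X, X) \asymp 2^{-k/\dim X}$ holds in any finite-dimensional normed space, so the argument is uniform in $1 \le q \le \infty$ and in $d$; this uniformity is what makes the diagonal case $L_q \to L_q$ tractable in full generality, in contrast to the endpoint cases in Theorems \ref{T2.3}, \ref{T1.5}, \ref{T1.6}, which genuinely require small-ball inequalities or nontrivial volume estimates for sets of Fourier coefficients.
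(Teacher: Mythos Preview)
Your proof is correct and follows essentially the same approach as the paper: the lower bound comes from embedding a scaled $L_q$-ball of the hyperbolic layer $\Tr(\Delta Q_l)$ into $\bW^{a,b}_q$ and invoking the finite-dimensional self-entropy estimate (Corollary~\ref{Corollary 32.1}), while the upper bound allocates bits geometrically across the levels $\Tr(Q_n)$ using the same self-entropy bound and sums---precisely the content of Theorem~\ref{T2.1G} combined with Remark~\ref{R=}. The only cosmetic differences are that the paper packages the upper-bound summation into the general scheme of Theorem~\ref{T2.1G}, and that for the lower bound you should explicitly invoke Proposition~\ref{Corollary 31.1} to pass from entropy in the subspace $V_l$ to entropy in the ambient $L_q$.
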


Let us make some comments on Theorem \ref{T2.1}. As we already mentioned above classes $\bW^{r,0}_1$ are close to classes $\bW^r_{1,\alpha}$ but they are different. We show that they are different even in the sense of asymptotic behavior of their entropy numbers. We point out that the right order of $\e_k(\bW^r_{1,\alpha},L_p)$ is not known for $d>2$. We confine ourselves to the case $d=2$. It is proved in \cite{KaTe03} that for $r> \max(1/2,1-1/p)$
\be\label{2.7}
\e_k(\bW^r_{1,\alpha},L_p) \asymp k^{-r}(\log k)^{r+1/2}, \qquad 1\le p<\infty.
\ee
Theorem \ref{T2.1} gives for $r>1-1/p$
\be\label{2.8}
\e_k(\bW^{r,0}_1,L_p) \asymp k^{-r}(\log k)^{r}, \qquad 1\le p<\infty.
\ee
This shows that in the sense of the entropy numbers class $\bW^{r,0}_1$ is smaller than $\bW^r_{1,\alpha}$. It is interesting to compare (\ref{2.7}) and (\ref{2.8}) with the known estimates in the case $1<q,p<\infty$
\be\label{2.9}
\e_k(\bW^{r}_{q,\alpha},L_p) \asymp \e_k(\bW^{r,0}_q,L_p) \asymp k^{-r}(\log k)^{r}, \qquad 1\le p<\infty.
\ee
Relation (\ref{2.9}) is for the case $d=2$. The general case of $d$ is also known in this case (see (\ref{36.2}) and its discussion above and also see Section 3.6 of \cite{Tbook} for the corresponding results and historical comments). Relations (\ref{2.8}) and (\ref{2.9}) show that in the sense of entropy numbers the class $\bW^{r,0}_1$ behaves as a limiting case of classes $\bW^r_{q,\alpha}$ when $q\to 1$. 

The proof of upper bounds in Theorems \ref{T2.1} and \ref{T2.3} is based on greedy approximation technique. It is a new and powerful technique. In particular, Theorem \ref{T2.3} gives the same upper bound as in (\ref{36.4'}) for the class $\bar\bW^{r,0}_1$, which is  wider than any of the classes $\bW^r_q$, $q>1$, from (\ref{36.4'}).  
 In Section 7 we develop mentioned above new technique, which is based on nonlinear $m$-term approximations, to prove the following result.

\begin{Theorem}\label{T4.6} Let $1<q\le 2$ and  $a>1/q$. Then
\be\label{4.18}
  \e_k(\bW^{a,b}_q,L_\infty) \ll k^{-a} (\log k)^{(d-1)(a+b)+1/2}.
\ee
\end{Theorem}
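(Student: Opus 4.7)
The plan is to follow the two-step strategy explicitly announced in the introduction. For Step~1, the goal is the best $m$-term approximation bound
\begin{equation*}
\sigma_m(f, \T^d, L_\infty) \ll m^{-a}(\log m)^{(d-1)(a+b) - a + 1/2}, \qquad f \in \bW^{a,b}_q,
\end{equation*}
with respect to the trigonometric dictionary. I would decompose $f = \sum_{l\ge 0} f_l$ with $f_l\in \Tr(Q_l)$ and $\|f_l\|_q\le 2^{-al}l^{(d-1)b}$, and attack each dyadic block separately. For each block the key input is the $L_\infty$ version of the Maurey/Carl randomization lemma, which gives, for $t\in \Tr(Q_l)$,
\begin{equation*}
\sigma_j(t, \T^d, L_\infty) \ll \|t\|_{A_1(\T^d)}\, j^{-1/2}\,(\log|Q_l|)^{1/2}.
\end{equation*}
Since $1<q\le 2$, Hausdorff--Young yields the cheap bound $\|t\|_{A_1(\T^d)}\le |Q_l|^{1/q}\|t\|_q$. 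After fixing a cutoff $L$, allocating a budget $m_l$ to each block $l\le L$ and controlling the tail through Nikol'skii,
\begin{equation*}
\sum_{l>L}\|f_l\|_\infty \ll 2^{(1/q-a)L}L^{(d-1)(1/q+b)}
\end{equation*}
(which is summable thanks to $a>1/q$), a straightforward optimization of $\{m_l\}$ should yield the target rate.

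For Step~2, I would apply the standard encoding inequality (see, e.g., Chapter~3 of \cite{Tbook}): if $\sigma_m(W,\D, X)\le \psi(m)$ with a dictionary $\D$ of cardinality $N$, then $\e_{Cm\log N}(W, X) \ll \psi(m)$, up to a quantization error absorbed into the constant. Taking $\D = \T^d \cap Q_L$ with $|Q_L|\asymp 2^L L^{d-1}$ and $L\asymp \log k$, the choice $m \asymp k/\log k$ gives $m\log N \asymp k$, and Step~1 converts into
\begin{equation*}
\e_k(\bW^{a,b}_q, L_\infty) \ll (k/\log k)^{-a}(\log k)^{(d-1)(a+b) - a + 1/2} = k^{-a}(\log k)^{(d-1)(a+b)+1/2},
\end{equation*}
as claimed.

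The delicate point is Step~1. The extra $(\log m)^{1/2}$ compared with the $L_q$-target Theorem~\ref{T1.7} is precisely the Pisier $(\log N)^{1/2}$ constant that enters when one replaces a type-2 target by $L_\infty$, and extracting it cleanly forces us to use randomization rather than pure greedy thresholding. Achieving the full $m^{-a}$ decay (as opposed to the weak $m^{-1/2}$ that the Maurey lemma would produce if applied to all of $f$ at once) requires a careful block-by-block allocation that leverages both the smoothness parameter $a$ and the Hausdorff--Young regime $q\le 2$; this balancing is the technical heart of the argument.
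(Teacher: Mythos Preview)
Your Step~1 has a genuine gap. The Maurey/Carl bound combined with the Hausdorff--Young estimate $\|f_l\|_{A_1}\le |Q_l|^{1/q}\|f_l\|_q$ gives only
\[
\sigma_{m_l}(f_l,\Tr^d)_\infty \ll m_l^{-1/2}\,l^{1/2}\,|Q_l|^{1/q}\,\|f_l\|_q,
\]
i.e.\ an $m_l^{-1/2}$ rate per block. No block allocation can upgrade a uniform $m_l^{-1/2}$ to a global $m^{-a}$ when $a>1/q\ge 1/2$: optimizing $\{m_l\}$ under $\sum m_l\le m$ with your bound produces at best $\sigma_m\ll m^{\,1/q-a-1/2}(\log m)^{(d-1)(a+b)+1/2}$, which for $q<2$ is off from your stated target by the \emph{polynomial} factor $m^{1/q-1/2}$. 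The point is that Hausdorff--Young only controls $\{\hat f_l(\bk)\}$ in $\ell^{q'}$ with $q'\ge 2$, and thresholding an $\ell^{q'}$ sequence gives no gain in $\ell^1$ or $\ell^2$.

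What the paper does instead is insert an $L_q\to L_2$ thresholding step that exploits the Littlewood--Paley/Marcinkiewicz structure of the hyperbolic layer. Using the orthonormal system $\U^d$, Lemma~\ref{L4.1Q} yields $\sigma_{m/2}(\Tr(Q_n)_q,\U^d)_2\ll(|Q_n|/m)^{1/q-1/2}$; applying the greedy $L_2\to L_\infty$ step (Theorem~\ref{T2.6'}) to the residual then gives the sharp per-block rate
\[
\sigma_m(\Tr(Q_n)_q,\D_n^2)_\infty \ll n^{1/2}(|Q_n|/m)^{1/q}
\]
(Lemma~\ref{L4.3}), with the crucial exponent $1/q$ rather than $1/2$.

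A second, smaller issue: even with the correct $m_l^{-1/q}$ per block, assembling at the $\sigma_m$ level and then invoking the crude encoding $\e_{Cm\log N}\ll\sigma_m$ loses an extra $(\log k)^a$. The paper avoids this by converting $m$-term bounds to entropy numbers \emph{for each fixed polynomial ball} via the sharper Theorem~\ref{T3.1}/Remark~\ref{R3.1} (yielding Theorem~\ref{T4.4}, i.e.\ condition~\eqref{EA} with $\alpha=1/2$, $\beta=\gamma=1/q$), and only then assembling across scales via Theorem~\ref{T2.1G}. That ordering is what preserves the correct logarithmic power.
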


In particular, Theorem \ref{T4.6} implies (\ref{36.4'}). 

Theorem \ref{T1.6} discovers an interesting new phenomenon. Comparing (\ref{1.12})  with (\ref{1.13}), we see that the entropy numbers of the class $\bar \bW^{a,b}_\infty$ in the $L_p$ space have different rate of decay in cases $1\le p<\infty$ and $p=\infty$. We note that in the proof of the upper bounds in this new phenomenon we use the Riesz products for the hyperbolic crosses. This technique works well in the case $d=2$ but we do not know how to extend it to the general case $d>2$. This difficulty is of the same nature as the corresponding difficulty in generalizing the Small Ball Inequality from $d=2$ to $d>2$ (see \cite{Tbook}, Ch. 3, for further discussion). 
We already mentioned above that in studying the entropy numbers of function classes
the discretization technique is useful. Classically,
the Marcinkiewicz theorem serves as a powerful tool for discretizing the
$L_p$-norm of a trigonometric polynomial. It works well in the multivariate case for trigonometric polynomials with frequencies from a parallelepiped. 
However, there
is no analog of Marcinkiewicz' theorem for hyperbolic cross polynomials (see \cite{KaTe03} and \cite{DTU}, Section 2.5, for a discussion). Thus, in Sections 5--7
we develop a new technique for estimating the entropy numbers of the unit balls of the hyperbolic cross polynomials. The most interesting results are obtained in the 
dimension $d=2$. It would be very interesting to extend these results to the case $d>2$. It is a challenging open problem. 

Finally, we emphasize that in the case $1<q<\infty$, when the classes $\bW^r_{q,\al}$ are embedded in the classes $\bW^{r,0}_q$, the new technique, developed in this paper, provides all known upper bounds. In the case $p<\infty$ Theorem \ref{T4.2} gives the upper bounds in (\ref{36.2}), and in the case $p=\infty$ Theorem \ref{T4.6} gives the upper bounds in (\ref{36.4'}).

\section{Known results}
\subsection{General inequalities}
For the reader's convenience we collect in this section known results, which will be used in this paper. The reader can find results of this subsection, except Theorem \ref{NI}, and their proofs in \cite{Tbook}, Chapter 3.
\begin{Proposition}\label{Corollary 31.1} Let $A\subset Y$, and let $Y$ be a subspace of $X$. Then
$$
N_\e(A,X) \ge N_{2\e}(A,Y).
$$
\end{Proposition}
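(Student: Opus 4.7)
The plan is to convert an $\e$-cover of $A$ by $X$-balls (whose centers may lie anywhere in $X$) into a $2\e$-cover by $Y$-balls with centers in $Y$, losing only a factor of $2$ in the radius and no factor at all in the number of balls.

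First, I would let $N := N_\e(A,X)$ and fix centers $y^1,\dots,y^N \in X$ with $A \subseteq \bigcup_{j=1}^N B_X(y^j,\e)$. Throw away any $y^j$ for which $B_X(y^j,\e) \cap A = \emptyset$; for each remaining $j$ choose a point $z^j \in B_X(y^j,\e) \cap A$. Since $A \subseteq Y$, these new centers $z^j$ lie in $Y$.

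Next, I would verify that $A \subseteq \bigcup_j B_Y(z^j, 2\e)$. For any $x \in A$, pick $j$ with $x \in B_X(y^j,\e)$; then by the triangle inequality
\[
\|x-z^j\|_X \le \|x-y^j\|_X + \|y^j - z^j\|_X \le \e + \e = 2\e.
\]
Since $Y$ is a subspace of $X$, the norm of $Y$ agrees with the norm of $X$ on the difference $x - z^j \in Y$, so $\|x-z^j\|_Y \le 2\e$, i.e.\ $x \in B_Y(z^j,2\e)$. Therefore $N_{2\e}(A,Y) \le N = N_\e(A,X)$, which is the claimed inequality.

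There is essentially no obstacle here: the only subtle point worth stating cleanly is that one must first replace the arbitrary $X$-centers $y^j$ by points of $A$ (hence of $Y$) before invoking the subspace property, and this is exactly what costs the factor of $2$ in the radius. The argument uses nothing beyond the triangle inequality and the definition of the induced norm on a subspace, so no additional hypothesis on $Y$ (closedness, completeness, etc.) is needed.
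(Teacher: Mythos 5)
Your proof is correct and is exactly the standard argument (the paper itself does not reprove this proposition but refers to \cite{Tbook}, Chapter 3, where the same recentering-plus-triangle-inequality argument is given). The key step — replacing the arbitrary centers $y^j\in X$ by points $z^j\in B_X(y^j,\e)\cap A\subseteq Y$ at the cost of doubling the radius — is precisely the intended idea, and you have stated it cleanly.
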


Let us consider the space ${\mathbb R}^D$ equipped with different norms, say, norms $\|\cdot\|_X$ and $\|\cdot\|_Y$. For a Lebesgue measurable set $E\in {\mathbb R}^D$ we denote its Lebesgue measure by $vol(E):=vol_D(E)$.
\begin{Theorem}\label{Theorem 32.1} For any two norms $X$ and $Y$ and any $\e>0$ we have
\begin{equation}\label{32.1}
\frac{1}{\e^D}\frac{vol(B_Y)}{vol(B_X)} \le N_\e(B_Y,X) \le \frac{vol(B_Y(0,2/\e)\oplus B_X)}{vol(B_X)}. 
\end{equation}
\end{Theorem}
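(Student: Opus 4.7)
The plan is to prove both inequalities by elementary volume comparisons on $\mathbb{R}^D$, where both norms $\|\cdot\|_X$ and $\|\cdot\|_Y$ live; the only subtlety is a rescaling to match the form of the right-hand side.

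For the lower bound, I would start from a near-optimal cover. Suppose $N = N_\e(B_Y,X)$, so there exist $y^1,\dots,y^N \in \mathbb{R}^D$ with $B_Y \subseteq \bigcup_{j=1}^N (y^j + \e B_X)$. Since Lebesgue measure is translation invariant and $vol(\e B_X) = \e^D vol(B_X)$, monotonicity and subadditivity of $vol$ immediately give
$$vol(B_Y) \le \sum_{j=1}^N vol(y^j + \e B_X) = N\,\e^D\,vol(B_X),$$
which rearranges to the claimed lower bound.

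For the upper bound, I would switch to a packing argument. Let $\{y^1,\dots,y^N\}\subset B_Y$ be a maximal $\e$-separated system in $(B_Y,\|\cdot\|_X)$, i.e.\ $\|y^i-y^j\|_X\ge \e$ whenever $i\ne j$ and no further point of $B_Y$ can be added preserving this property. Maximality forces the open balls $B_X(y^j,\e)$ to cover $B_Y$, so $N_\e(B_Y,X)\le N$. On the other hand, by the separation condition the translates $y^j + \tfrac{\e}{2}B_X$ are pairwise disjoint, and each sits inside $B_Y + \tfrac{\e}{2}B_X$. Comparing volumes,
$$N\,\Bigl(\tfrac{\e}{2}\Bigr)^{D} vol(B_X) \;=\; \sum_{j=1}^N vol\bigl(y^j + \tfrac{\e}{2}B_X\bigr) \;\le\; vol\bigl(B_Y + \tfrac{\e}{2}B_X\bigr).$$
Now I rescale by the factor $2/\e$: since $B_Y + \tfrac{\e}{2}B_X = \tfrac{\e}{2}\bigl((2/\e)B_Y + B_X\bigr) = \tfrac{\e}{2}\bigl(B_Y(0,2/\e)\oplus B_X\bigr)$, the right-hand side equals $(\e/2)^D vol(B_Y(0,2/\e)\oplus B_X)$. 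Dividing through by $(\e/2)^D vol(B_X)$ yields
$$N \;\le\; \frac{vol\bigl(B_Y(0,2/\e)\oplus B_X\bigr)}{vol(B_X)},$$
which gives the stated upper bound.

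No step is really hard; the only point requiring a tiny bit of care is the two uses of Minkowski sum with $\tfrac{\e}{2}B_X$ (once to contain the disjoint small balls, once to rewrite the sum in the normalized form $B_Y(0,2/\e)\oplus B_X$), together with the observation that a maximal $\e$-separated set in $B_Y$ is automatically an $\e$-net. Everything else is translation invariance and homogeneity of Lebesgue measure. I would also briefly note why the measures are well-defined — $B_X$ and $B_Y$ are convex bodies in $\mathbb{R}^D$ hence Borel sets, and all the Minkowski sums appearing are convex and therefore measurable as well.
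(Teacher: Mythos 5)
Your proof is correct and is the standard volume-comparison argument for this theorem: the lower bound by covering $B_Y$ with $N$ translates of $\e B_X$, and the upper bound by a maximal $\e$-separated set whose $\tfrac{\e}{2}$-balls pack into $B_Y\oplus\tfrac{\e}{2}B_X=\tfrac{\e}{2}\bigl(B_Y(0,2/\e)\oplus B_X\bigr)$. The paper itself does not reprove this statement but cites it from \cite{Tbook}, Chapter 3, where the proof is essentially the one you give, so there is nothing to add.
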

Let us formulate one immediate corollary of Theorem \ref{Theorem 32.1}.
\begin{Corollary}\label{Corollary 32.1} For any $D$-dimensional real Banach space $X$ we have
$$
\e^{-D} \le N_\e(B_X,X) \le (1+2/\e)^D,
$$
and, therefore,
$$
\e_k(B_X,X) \le 3(2^{-k/D}).
$$
\end{Corollary}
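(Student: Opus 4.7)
The plan is to derive both assertions straight from Theorem \ref{Theorem 32.1} with $Y=X$, since then $B_Y=B_X$ and the $vol(B_Y)/vol(B_X)$ factor in the lower bound equals $1$.

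For the two-sided estimate on $N_\e(B_X,X)$, I would first substitute $Y=X$ into (\ref{32.1}); the lower bound collapses immediately to $\e^{-D}$. For the upper bound I need to identify $B_X(0,2/\e)\oplus B_X$. Since the unit ball of a norm satisfies $rB_X=B_X(0,r)$ and Minkowski addition of balls in a normed space gives $rB_X+sB_X=(r+s)B_X$ (from the triangle inequality and convexity), one has $B_X(0,2/\e)\oplus B_X=(1+2/\e)B_X$. By scaling of Lebesgue measure in $\R^D$, its volume equals $(1+2/\e)^D \, vol(B_X)$, and dividing by $vol(B_X)$ yields the stated upper bound.

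For the entropy-number consequence, I would choose $\e_0 := 3\cdot 2^{-k/D}$ and split into two cases. If $\e_0\ge 1$, i.e.\ $k\le D\log_2 3$, the bound $\e_k(B_X,X)\le \e_0$ is trivial because $B_X\subseteq B_X(0,1)$ already gives $\e_k\le 1\le \e_0$. If $\e_0<1$, i.e.\ $2^{k/D}>3$, I plug $\e_0$ into the upper bound:
\[
(1+2/\e_0)^D = \bigl(1+\tfrac{2}{3}\cdot 2^{k/D}\bigr)^D \le \bigl(2^{k/D}\bigr)^D = 2^k,
\]
where the inequality uses $1\le \tfrac{1}{3}\cdot 2^{k/D}$, equivalent to $2^{k/D}\ge 3$. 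Hence $N_{\e_0}(B_X,X)\le 2^k$, so $\e_k(B_X,X)\le \e_0 = 3\cdot 2^{-k/D}$, by the definition of $\e_k$.

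There is no real obstacle here; the only mild point to be careful about is justifying the Minkowski-sum identity $(2/\e)B_X + B_X = (1+2/\e)B_X$ (a standard convexity fact) and handling the small-$k$ case in the entropy-number bound so that the constant $3$ works uniformly.
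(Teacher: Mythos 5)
Your proof is correct and follows exactly the route the paper intends: the corollary is stated there as an immediate consequence of Theorem \ref{Theorem 32.1} with $Y=X$, using the Minkowski-sum identity $B_X(0,2/\e)\oplus B_X=(1+2/\e)B_X$ and volume scaling, and then optimizing $\e$ to get the entropy-number bound. Your case split at $\e_0=3\cdot 2^{-k/D}\ge 1$ versus $\e_0<1$ correctly handles the constant $3$ uniformly in $k$, so there is nothing to add.
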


Let $|\cdot| :=\|\cdot\|_2$ denote the $\ell^D_2$ norm and let $B^D_2$ be a unit ball in $\ell^D_2$. Denote $S^{D-1}$ the boundary of $B^D_2$. We define by $d\sigma(x)$ the normalized $(D-1)$-dimensional measure on $S^{D-1}$. Consider another norm $\|\cdot\|$ on $\bR^D$ and denote by $X$ the $\bR^D$ equipped with $\|\cdot\|$.  
\begin{Theorem}\label{Theorem 32.3} Let $X$ be $\bR^D$ equipped with $\|\cdot\|$ and
$$
M_X := \int_{S^{D-1}} \|x\|d\sigma(x).
$$
Then we have
$$
 \e_k(B^D_2,X) \ll M_X \left\{\begin{array}{ll}   (D/k)^{1/2}, & k\le D\\
 2^{-k/D} , &  k\ge D.\end{array} \right.
$$
\end{Theorem}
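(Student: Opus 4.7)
The plan is to handle the two regimes $k\le D$ and $k\ge D$ separately, with the crossover coming from a dual Sudakov type covering estimate at the critical scale $k\asymp D$. The quantity $M_X$ is the (spherical) mean width of the unit ball $B_X$, and the whole point is to translate information about this single average into a full covering bound for $B^D_2$ in the norm $\|\cdot\|$.

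For the regime $k\le D$, I would prove the dual Sudakov inequality
$$
\log_2 N_\e(B^D_2,X) \;\ll\; D\,(M_X/\e)^2,
$$
which, after solving $k= CD(M_X/\e)^2$ for $\e$, yields exactly $\e_k(B^D_2,X)\ll M_X(D/k)^{1/2}$. The standard route is via Gaussian analysis: let $g\sim\mathcal{N}(0,I_D)$ and observe that by the polar decomposition of $g$ one has $\mathbb{E}\,\|g\|_X = (\mathbb{E}|g|)\,M_X \asymp \sqrt{D}\,M_X$. The Gaussian measure of the slab $\{x:\|x\|_X\le t\}$ then controls how many translates of $tB_X$ are needed to cover $B^D_2$: after a shift argument (comparing $\gamma(y+tB_X)$ for an $\e$-separated net in $B^D_2$ against $\gamma(2tB_X)$), one obtains the inequality above from the concentration/tail bound $\mathbb{P}(\|g\|_X\le t)\ge \exp(-C(\mathbb{E}\|g\|_X/t)^2)$ valid for $t\lesssim \mathbb{E}\|g\|_X$.

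For the regime $k\ge D$, I would bootstrap from the endpoint $k=D$ of the previous step. That endpoint gives $\e_D(B^D_2,X)\ll M_X$, so $B^D_2$ can be covered by $2^D$ translates $y^j+C\,M_X\,B_X$. Each intersection $(y^j+C M_X B_X)\cap B^D_2$ lies in a $D$-dimensional ball of radius $CM_X$ in the $X$-norm, to which Corollary \ref{Corollary 32.1} applies: it can be further covered by $2^{k-D}$ balls of radius $3CM_X\,2^{-(k-D)/D}\asymp M_X\,2^{-k/D}$. Concatenating the two coverings produces $2^k$ balls of radius $\ll M_X\,2^{-k/D}$ that cover $B^D_2$, which is the claimed large-$k$ bound.

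The main obstacle is clearly the dual Sudakov step, since the Gaussian shift/concentration argument is subtle and is the only place where the geometry of $B^D_2$ (via the rotational invariance of $g$) gets coupled with the analytic quantity $M_X$. The volumetric step for large $k$ is straightforward once the endpoint estimate is in hand, since Corollary \ref{Corollary 32.1} supplies the factor $2^{-k/D}$ essentially for free.
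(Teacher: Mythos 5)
Your proposal is correct, and it follows the standard route for this result: the paper itself gives no proof of Theorem \ref{Theorem 32.3} but cites \cite{Tbook}, Chapter 3, where the argument is exactly the dual Sudakov inequality of Pajor--Tomczak-Jaegermann (via the Gaussian shift/packing argument and $\mathbb{E}\|g\|_X \asymp \sqrt{D}\,M_X$) for $k\le D$, glued to the volumetric bound of Corollary \ref{Corollary 32.1} for $k\ge D$. The only cosmetic remark is that in the dual Sudakov step one usually dilates the separated net by $\lambda\asymp M_X\sqrt{D}/\e$ so that the exponential factor comes from the Cameron--Martin shift $e^{-\lambda^2/2}$ and only a Markov bound on $\gamma(tB_X)$ is needed, rather than invoking a small-ball lower bound for $\gamma(tB_X)$; both versions work.
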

 The following Nikol'skii-type inequalities are known (see \cite{Tmon}, Chapter 1, Section 2).
\begin{Theorem}\label{NI} Let $1\le q< p <\infty$. For any $t\in \Tr(Q_n)$ (see Subsection 2.2 below for the definition of $\Tr(Q_n)$) we have
$$
\|t\|_p \le C(q,p,d)2^{\bt n} \|t\|_q,\quad \bt:=1/q-1/p.
$$
\end{Theorem}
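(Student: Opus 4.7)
The plan is to reduce the hyperbolic-cross Nikol'skii inequality to the classical inequality on a parallelepiped, applied block by block, and then combine the block estimates via a de la Vall\'ee Poussin-type kernel adapted to $Q_n$.

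First, I will recall the classical multivariate Nikol'skii inequality for trigonometric polynomials with spectrum in a box: if $u$ has frequencies in $\prod_{j=1}^{d}[-N_{j},N_{j}]$, then
\begin{equation*}
\|u\|_{p}\le C(q,p,d)\Bigl(\prod_{j=1}^{d}N_{j}\Bigr)^{\bt}\|u\|_{q}.
\end{equation*}
This is obtained by tensorising the univariate inequality, which in turn follows by convolving $u$ with a Jackson-type kernel $J_{N}$ whose $L_{r}$-norm satisfies $\|J_{N}\|_{r}\asymp N^{1-1/r}$. Applied to a dyadic block $\delta_{\bs}(t)$, whose spectrum lies in $\prod_{j}[-2^{s_{j}},2^{s_{j}}]$, this gives
\begin{equation*}
\|\delta_{\bs}(t)\|_{p}\le C\,2^{\|\bs\|_{1}\bt}\|\delta_{\bs}(t)\|_{q}\le C\,2^{n\bt}\|\delta_{\bs}(t)\|_{q}
\end{equation*}
whenever $\|\bs\|_{1}\le n$.

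Second, to pass from the per-block bound to a bound on $t=\sum_{\|\bs\|_{1}\le n}\delta_{\bs}(t)$ without losing the combinatorial factor $|\{\|\bs\|_{1}\le n\}|\asymp n^{d-1}$, I will represent $t$ as a convolution $t=V_{n}\ast t$, where $V_{n}$ is a hyperbolic-cross de la Vall\'ee Poussin kernel obtained as a suitable sum of tensor products of univariate VP-kernels, satisfying $\widehat{V_{n}}\equiv 1$ on $Q_{n}$ (so that $V_{n}\ast t=t$ for every $t\in\Tr(Q_{n})$). Young's convolution inequality then yields
\begin{equation*}
\|t\|_{p}=\|V_{n}\ast t\|_{p}\le\|V_{n}\|_{r}\,\|t\|_{q},\qquad 1/r=1-\bt,
\end{equation*}
so the proof reduces to establishing $\|V_{n}\|_{r}\le C\,2^{n\bt}$ with a constant independent of $n$.

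The main obstacle is precisely this kernel-norm estimate. A straightforward dyadic decomposition of $V_{n}$ combined with $\|\prod_{j}J_{2^{s_{j}}}\|_{r}\asymp 2^{\|\bs\|_{1}(1-1/r)}$ and the triangle inequality produces an unwanted $n^{d-1}$ factor; eliminating it is the heart of the proof. I would handle this by Riesz--Thorin interpolation between two non-endpoint pairs $(q_{0},p_{0})$ and $(q_{1},p_{1})$ with $1/q_{i}-1/p_{i}=\bt$ and $1<q_{i}<p_{i}<\infty$, where the log-free bound is accessible via the Littlewood--Paley square-function characterisation of the $L_{r}$-norm combined with the block inequality applied pointwise; interpolation (or a short limiting argument) then extends the sharp inequality to the full range $1\le q<p<\infty$ of the theorem.
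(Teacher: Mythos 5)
The paper does not prove Theorem \ref{NI} at all; it simply cites \cite{Tmon}, Chapter~1, Section~2. So your attempt has to be judged on its own. Your first two steps (tensorised box Nikol'skii, the block bound $\|\delta_\bs(t)\|_p\ll 2^{\|\bs\|_1\bt}\|\delta_\bs(t)\|_q$) are fine, but the central reduction is to a false statement. You reduce the theorem, via Young's inequality, to the kernel bound $\|V_n\|_r\le C\,2^{n\bt}$ with $1/r=1-\bt$, for some kernel with $\widehat{V_n}\equiv 1$ on $Q_n$. Any such kernel satisfies, by Parseval, $\|V_n\|_2\ge |Q_n|^{1/2}\asymp 2^{n/2}n^{(d-1)/2}$. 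Taking $(q,p)=(1,2)$, so $\bt=1/2$ and $r=2$, the bound you need is $\|V_n\|_2\le C2^{n/2}$, which therefore fails by the factor $n^{(d-1)/2}$. More generally, for any $r\le 2$, combining $\|V_n\|_2\le\|V_n\|_r^{r/2}\|V_n\|_\infty^{1-r/2}$ with $\|V_n\|_\infty\le\|\widehat{V_n}\|_{\ell_1}\ll |Q_{n+c}|$ gives $\|V_n\|_r\gg |Q_n|^{1-1/r}=2^{n\bt}n^{(d-1)\bt}$. The logarithmic factor in the kernel norm is not an artifact of the triangle inequality that cleverness can remove --- it is genuinely present for every reproducing kernel of $\Tr(Q_n)$, so no refinement of the kernel-norm estimate can close your argument. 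This is precisely the phenomenon the paper's introduction warns about: there is no de la Vall\'ee Poussin/Marcinkiewicz machinery for hyperbolic crosses with constants independent of $n$.

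The interpolation fallback does not repair this. If you interpolate the kernel-norm estimates you are still bounding a quantity that is too large. If instead you mean to interpolate the inequality itself (say, for the operator $f\mapsto V_n\ast f$), then two separate problems remain. First, the ``log-free bound via Littlewood--Paley'' that you invoke at the interior pairs only closes when $q\le 2\le p$: there $\|t\|_p\ll(\sum_\bs\|\delta_\bs t\|_p^2)^{1/2}$, the block Nikol'skii inequality is applied inside the $\ell^2$ sum, and $(\sum_\bs\|\delta_\bs t\|_q^2)^{1/2}\ll\|t\|_q$ --- both square-function inequalities pass through $\ell^2$ in the right direction. For $2<q<p$ or $q<p<2$ the same computation loses a power of $n$ via H\"older over the $\asymp n^{d-1}$ blocks, and one needs additional steps (log-convexity of $\|t\|_p$ in $1/p$ to reduce $q<p\le 2$ to $p=2$, and duality together with the uniform $L_{p'}$-boundedness of $S_{Q_n}$ to reduce $2\le q<p$), none of which you supply. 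Second, and decisively, Riesz--Thorin interpolates \emph{between} two established estimates; it does not extrapolate to the endpoint $q=1$, and the Littlewood--Paley constants blow up as $q\to 1$, so no ``short limiting argument'' reaches $q=1$. Since $q=1$ is inside the stated range (and is the case this paper actually exploits), your proof is incomplete exactly where it matters; the $q=1$ case requires a separate mechanism.
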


\subsection{Volume estimates}
  Denote for a natural number $n$
$$
Q_n := \cup_{\|\bs\|_1\le n}\rho(\bs); \qquad \Delta Q_n := Q_n\setminus Q_{n-1} = \cup_{\|\bs\|_1=n}\rho(\bs)
$$
with $\|\bs\|_1 = s_1+\dots+s_d$ for $\bs\in \Z^d_+$. We call a set $\Delta Q_n$ {\it hyperbolic layer}. For a set $\L \subset \Z^d$ denote
$$
\Tr(\L) := \{f\in L_1:  \hat f(\bk)=0, \bk\in \mathbb Z^d\setminus \L\},\quad \Tr(\L)_p :=\{f\in\Tr(\L): \|f\|_p\le 1\}.
$$
For a finite set $\L$ we assign to each $f = \sum_{\bk\in \L} \hat f(\bk) e^{i(\bk,\bx)}\in \Tr(\L)$ a vector
$$
A(f) := \{(\text{Re}\hat f(\bk), \text{Im}\hat f(\bk)),\quad \bk\in \L\} \in \R^{2|\L|}
$$
where $|\L|$ denotes the cardinality of $\L$ and define
$$
B_\L(L_p) := \{A(f) : f\in \Tr(\L)_p\}.
$$
The volume estimates of the sets $B_\L(L_p)$ and related questions have been studied in a number of papers: the case $\L=[-n,n]$, $p=\infty$ in \cite{K1}; the case $\L=[-N_1,N_1]\times\cdots\times[-N_d,N_d]$, $p=\infty$ in \cite{TE2}, \cite{T3}.   In the case $\L = \Pi(\bN,d) := [-N_1,N_1]\times\cdots\times[-N_d,N_d]$, $\bN:=(N_1,\dots,N_d)$, the following estimates are known.
\begin{Theorem}\label{T2.1KaTe} For any $1\le p\le \infty$ we have
$$
(vol(B_{\Pi(\bN,d)}(L_p)))^{(2|\Pi(\bN,d)|)^{-1}} \asymp |\Pi(\bN,d)|^{-1/2},
$$
with constants in $\asymp$ that may depend only on $d$.
\end{Theorem}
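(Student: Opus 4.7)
The plan is to prove the two extremes $p=\infty$ (lower bound on the volume) and $p=1$ (upper bound) and then sandwich the remaining $p$'s. From the inequalities $\|f\|_\infty\ge\|f\|_p\ge\|f\|_1$ on $\Td$ (up to constants depending only on $d$) we have the containments $B_\L(L_\infty)\subseteq B_\L(L_p)\subseteq B_\L(L_1)$. The matching upper bound for $p=\infty$ is immediate from $B_\L(L_\infty)\subseteq B_\L(L_2)=B^{2|\L|}_2$ (Parseval) together with the standard $\mathrm{vol}(B^{2|\L|}_2)^{1/(2|\L|)}\asymp|\L|^{-1/2}$, and the matching lower bound for $p=1$ follows from $B_\L(L_\infty)\subseteq B_\L(L_1)$ once the lower bound below is in hand.

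For the lower bound on $\mathrm{vol}(B_\L(L_\infty))$ I would combine the Marcinkiewicz discretization theorem with Vaaler's cube section inequality. Fix an equispaced grid $G\subset\Td$ with $|G|=\prod_j(2mN_j+1)$ for an integer $m\ge 2$ (Bernstein's inequality gives this for $m=2$) large enough that
$$
\|t\|_\infty\le C_m\max_{\by\in G}|t(\by)|\qquad\text{for every }t\in\Tr(\Pi(\bN,d)).
$$
The evaluation map $T\colon\hat t\mapsto(t(\by))_{\by\in G}$ embeds $\mathbb C^{|\L|}\cong\R^{2|\L|}$ as a $2|\L|$-real-dimensional subspace $V\subset\R^{2|G|}$; orthogonality of characters on $G$ (no aliasing since $m\ge 2$) shows that $T$ is exactly $\sqrt{|G|}$ times an isometry onto $V$, so the associated real $(2|\L|)$-dimensional Lebesgue volume is multiplied by $|G|^{|\L|}$. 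Set $a:=1/(C_m\sqrt 2)$: if $(\mathrm{Re}\,t(\by),\mathrm{Im}\,t(\by))\in[-a,a]^2$ for all $\by\in G$ then $|t(\by)|\le a\sqrt 2=C_m^{-1}$, so $\|t\|_\infty\le 1$. Vaaler's theorem gives $\mathrm{vol}_{2|\L|}(V\cap[-a,a]^{2|G|})\ge(2a)^{2|\L|}$, and transferring back via $T^{-1}$ yields
$$
\mathrm{vol}(B_\L(L_\infty))\ge\frac{(2a)^{2|\L|}}{|G|^{|\L|}}\gg|\L|^{-|\L|},
$$
i.e.\ $\mathrm{vol}(B_\L(L_\infty))^{1/(2|\L|)}\gg|\L|^{-1/2}$.

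The upper bound on $\mathrm{vol}(B_\L(L_1))$ is then a short H\"older/Blaschke--Santal\'o argument. Parseval identifies the Euclidean pairing of coefficient vectors as $\<A(f),A(g)\>=(2\pi)^{-d}\mathrm{Re}\int f\bar g\,d\bx$, so $|\<A(f),A(g)\>|\le(2\pi)^{-d}$ whenever $\|f\|_1\le 1$ and $\|g\|_\infty\le 1$. This says $B_\L(L_1)\subseteq(2\pi)^{-d}\,B_\L(L_\infty)^{\circ}$, and Santal\'o's inequality combined with the previous step gives
$$
\mathrm{vol}(B_\L(L_\infty)^{\circ})\le\frac{\mathrm{vol}(B^{2|\L|}_2)^2}{\mathrm{vol}(B_\L(L_\infty))}\ll|\L|^{-|\L|},
$$
whence $\mathrm{vol}(B_\L(L_1))^{1/(2|\L|)}\ll|\L|^{-1/2}$, finishing the sandwich.

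The hard step is the lower bound on $\mathrm{vol}(B_\L(L_\infty))$: the naive approach via equispaced trigonometric interpolation would incur a Lebesgue-constant loss of order $(\log|\L|)^d$, while the cruder inclusion $B_\L(L_\infty)\supseteq c|\L|^{-1/2}B^{2|\L|}_2$ derived from Theorem \ref{NI} (Nikol'skii) is off by an entire factor $|\L|^{1/2}$ in the volume radius. Both losses are eliminated simultaneously by (i) choosing the Marcinkiewicz grid with oversampling $m\ge 2$, which makes $\|t\|_\infty$ genuinely equivalent to the grid maximum without any logarithmic factor, and (ii) replacing the naive cube-versus-ball inclusion by Vaaler's sharp central section inequality for the cube, which captures the right behavior with the codimension of $V$ in $\R^{2|G|}$.
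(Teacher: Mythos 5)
Your argument is correct, so let me compare it with what the paper does. The paper itself gives no proof of Theorem \ref{T2.1KaTe}: it is stated in Section 2 as a known result, with the hard part (the lower volume bound for $p=\infty$) attributed to \cite{K1}, \cite{TE2}, \cite{T3}, the upper bound for $p=1$ described as a reduction to the volume estimate for an octahedron (see \cite{T4}), and $p=2$ dismissed as the Euclidean ball. Your overall architecture --- lower bound at $p=\infty$, upper bound at $p=1$, and the inclusions $B_{\Pi(\bN,d)}(L_\infty)\subseteq B_{\Pi(\bN,d)}(L_p)\subseteq B_{\Pi(\bN,d)}(L_1)$ to handle everything in between --- is exactly the one implicit in the paper's remarks, but both endpoints are treated with different tools. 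For the lower bound you use the same discretization that opens the paper's proof of Lemma \ref{L2.1KaTe} (an oversampled grid $G$ on which $\|t\|_\infty \ll \max_{\by\in G}|t(\by)|$ with a constant depending only on $d$ and $m$, and on which the characters indexed by $\Pi(\bN,d)$ are orthogonal), but you close with Vaaler's cube-section theorem where the paper's lemma invokes Gluskin's theorem. Since your grid is adapted to $\bN$, so that $|G|\asymp|\Pi(\bN,d)|$, Gluskin's loss $(1+\log(M/N))^{1/2}$ would be $O(1)$ here and the two routes give the same sharp bound; Vaaler's theorem is the cleaner choice when, as here, the constraint system comes from an orthogonal family, and your accounting of the volume distortion of the evaluation map (a factor $|G|^{|\Pi(\bN,d)|}$, since the map is $\sqrt{|G|}$ times an isometry onto its image) is exactly right. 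For the upper bound you replace the octahedron reduction by polarity: Parseval identifies $B_{\Pi(\bN,d)}(L_1)$ with a subset of (a constant times) the polar body of $B_{\Pi(\bN,d)}(L_\infty)$, and Blaschke--Santal\'o converts the lower bound you have just proved into the required upper bound. This is correct and efficient, at the price of importing Santal\'o's inequality; the cited octahedron argument is more elementary but needs a separate computation with translated Dirichlet kernels. The remaining details --- the Marcinkiewicz-type inequality with oversampling $m\ge 2$ (which is the same inequality quoted in the proof of Lemma \ref{L2.1KaTe}), the absence of aliasing on the grid, and the $L_p$ normalization conventions, which only affect the constants in $\asymp$ --- all check out.
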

We note that the most difficult part of Theorem \ref{T2.1KaTe} is the lower estimate for $p=\infty$. The corresponding estimate was proved in the case $d=1$ in \cite{K1} and in the general case in \cite{TE2} and \cite{T3} by a method different from the one in \cite{K1}. The upper estimate for $p=1$ in Theorem \ref{T2.1KaTe} can be easily reduced to the volume estimate for an octahedron (see, for instance \cite{T4}). In the case $p=2$ Theorem \ref{T2.1KaTe} is a direct corollary of the well known estimates of the volume of the Euclidean unit ball. 

The case of arbitrary $\L$ and $p=1$ was studied in \cite{KT1}. The results of \cite{KT1} imply the following estimate.
\begin{Theorem}\label{T2.2KaTe} For any finite set $\L\subset \Z^d$ and any $1\le p\le 2$ we have
$$
vol(B_\L(L_p))^{(2|\L|)^{-1}} \asymp |\L|^{-1/2}.
$$
\end{Theorem}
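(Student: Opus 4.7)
The plan is to exploit the monotonicity of $L_p$-norms on the probability space $\Td$: for $1\le p\le 2$, $\|f\|_1\le\|f\|_p\le\|f\|_2$, so $B_\L(L_2)\subset B_\L(L_p)\subset B_\L(L_1)$. It therefore suffices to prove the lower bound for $p=2$ and the upper bound for $p=1$.

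The lower bound is immediate: by Parseval the map $A$ is an isometry from $(\Tr(\L),\|\cdot\|_2)$ onto Euclidean $\R^{2|\L|}$, so $B_\L(L_2)$ is literally the Euclidean unit ball, and the classical formula $vol(B_2^D)^{1/D}\asymp D^{-1/2}$ with $D=2|\L|$ yields $vol(B_\L(L_2))^{1/(2|\L|)}\asymp|\L|^{-1/2}$. The inclusion above transfers this lower bound to $B_\L(L_p)$ for every $1\le p\le 2$.

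For the upper bound $vol(B_\L(L_1))^{1/(2|\L|)}\ll|\L|^{-1/2}$ I would combine polar duality with the Blaschke--Santal\'o inequality. A Hahn--Banach argument identifies the polar of $K:=B_\L(L_1)$ (in the Euclidean structure on $\R^{2|\L|}$, which under $A$ is the real part of the $L_2$-pairing on $\Tr(\L)$) as
\[
K^\circ=\{A(P_\L\psi):\psi\in L_\infty(\Td),\ \|\psi\|_\infty\le 1\},
\]
where $P_\L$ is the Fourier projector onto $\Tr(\L)$; indeed, for $g\in\Tr(\L)$, $\sup\{|\int g\bar f|:f\in\Tr(\L),\ \|f\|_1\le 1\}=\inf\{\|\psi\|_\infty:P_\L\psi=g\}$ by the $L_\infty$--$L_1$ duality. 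The Blaschke--Santal\'o inequality $vol(K)^{1/D}\cdot vol(K^\circ)^{1/D}\le vol(B_2^D)^{2/D}\asymp 1/D$ then reduces the desired upper bound on $vol(K)^{1/D}$ to the complementary lower bound $vol(K^\circ)^{1/(2|\L|)}\gg|\L|^{-1/2}$.

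The hard part will be establishing this lower bound on $vol(K^\circ)$. My plan is a probabilistic construction: take i.i.d.\ random sign functions $\varepsilon_j$ on a sufficiently fine grid in $\Td$, so that $\|\varepsilon_j\|_\infty=1$ and hence $A(P_\L\varepsilon_j)\in K^\circ$. Since $\{e^{i(\bk,\cdot)}\}_{\bk\in\L}$ is an orthonormal system in $L_2(\Td)$ uniformly bounded by $1$ in $L_\infty$, the Fourier coefficients $\hat\varepsilon_j(\bk)$ form, after normalization, a bounded-subgaussian, approximately isotropic ensemble in $\R^{2|\L|}$, uniformly in $\L$. A Gluskin-type bound on the volume of the symmetric convex hull of $O(|\L|)$ such subgaussian vectors then produces a subset of $K^\circ$ with $(2|\L|)$-th volume root $\gg|\L|^{-1/2}$, completing the argument. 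The subtlety -- which is precisely why Theorem~\ref{T2.2KaTe} goes beyond the rectangular case in Theorem~\ref{T2.1KaTe} -- is the absence of Marcinkiewicz-type discretization for general $\L$: the grid must be chosen fine enough (depending on the geometry of $\L$) for the discrete random signs to mimic faithfully a continuous $\pm 1$ function at all frequencies in $\L$.
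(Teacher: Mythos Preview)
The paper does not prove Theorem~\ref{T2.2KaTe}; it is quoted from \cite{KT1} as a known result. Your reductions are correct: monotonicity of $L_p$-norms on $\Td$ gives $B_\L(L_2)\subset B_\L(L_p)\subset B_\L(L_1)$, so only the lower bound at $p=2$ (immediate from Parseval and the Euclidean ball volume) and the upper bound at $p=1$ remain; and Blaschke--Santal\'o legitimately reduces the latter to $vol(K^\circ)^{1/(2|\L|)}\gg|\L|^{-1/2}$ with $K^\circ=\{A(P_\L\psi):\|\psi\|_\infty\le1\}$.

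The gap is in the final probabilistic step. A sign function $\psi_\e$ on a grid of $M$ cells has $\|\psi_\e\|_\infty=1$, so $A(P_\L\psi_\e)\in K^\circ$; but once the grid is fine enough for the approximation $\hat\psi_\e(\bk)\approx M^{-1}\sum_m\e_m e^{-i(\bk,x_m)}$ to hold across $\L$, the covariance of $A(P_\L\psi_\e)$ is only $\asymp M^{-1}I$, so these random vectors have Euclidean norm $\asymp(|\L|/M)^{1/2}$. A Gluskin-type lower bound applies to the \emph{rescaled} isotropic vectors $\sqrt{M}\,A(P_\L\psi_\e)$; undoing the rescaling, the absolute convex hull of $O(|\L|)$ independent samples has $D$-th volume root only $\gg(MD)^{-1/2}$, short of the target by the factor $(D/M)^{1/2}$. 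And you cannot take $M\asymp|\L|$: for general $\L$ the grid must resolve the largest frequency present, which forces $M$ arbitrarily large relative to $|\L|$ --- precisely the Marcinkiewicz obstruction you flag, but here it is fatal rather than a technicality. The normalization you need for Gluskin and the membership in $K^\circ$ pull in opposite directions. A correct argument must use more of $K^\circ$ than the hull of $O(|\L|)$ random samples: $K^\circ$ is the zonoid $\int_{\Td}[-u(x),u(x)]\,dx$ with $u(x)=(\cos(\bk,x),-\sin(\bk,x))_{\bk\in\L}$ and isotropy $\int uu^T=\tfrac12 I$, and it is this full structure (equivalently, the whole image of $B_{L_\infty}$ under $P_\L$) that the argument in \cite{KT1} exploits.
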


The following result was obtained in \cite{KaTe03}.
\begin{Theorem}\label{T2.4KaTe} Let $\L$ have the form
$\L = \cup_{\bs\in S}\rho(\bs)$, $S\subset \Z^d_+$ is a finite set. Then for any $1\le p<\infty$ we have
$$
vol(B_\L(L_p))^{(2|\L|)^{-1}} \asymp |\L|^{-1/2}.
$$
\end{Theorem}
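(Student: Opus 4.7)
The plan is to split into two regimes. For $1\le p\le 2$ the bound is already contained in Theorem \ref{T2.2KaTe}, which treats arbitrary frequency sets, so nothing new is needed. For $p>2$ I would exploit the dyadic-shell structure $\L=\bigsqcup_{\bs\in S}\rho(\bs)$ through Littlewood--Paley. As a preliminary observation, at $p=2$ Parseval identifies $B_\L(L_2)$ with the unit Euclidean ball in $\R^{2|\L|}$, so Stirling's formula gives $vol(B_\L(L_2))^{1/(2|\L|)}\asymp |\L|^{-1/2}$; since $\|f\|_2\le\|f\|_p$ on the normalized torus $\T^d$, the inclusion $B_\L(L_p)\subset B_\L(L_2)$ provides the upper bound for every $p\ge 2$. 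Only the matching lower bound for $p>2$ remains.

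For the lower bound I would combine the Littlewood--Paley equivalence with Minkowski's inequality in $L^{p/2}$ (applicable since $p/2\ge 1$) to get
$$
\|f\|_p\le C_p\Bigl\|\bigl(\textstyle\sum_{\bs}|\delta_\bs(f)|^2\bigr)^{1/2}\Bigr\|_p\le C_p\Bigl(\sum_{\bs\in S}\|\delta_\bs(f)\|_p^2\Bigr)^{1/2}.
$$
Hence if $\|\delta_\bs(f)\|_p\le a_\bs$ for every $\bs\in S$ and $\sum_\bs a_\bs^2\le C_p^{-2}$, then $\|f\|_p\le 1$. Disjointness of the shells makes the coordinate map $A$ a volume-preserving identification $\R^{2|\L|}=\bigoplus_\bs\R^{2|\rho(\bs)|}$, so the above translates into the inclusion
$$
\bigoplus_{\bs\in S}a_\bs B_{\rho(\bs)}(L_p)\subset B_\L(L_p).
$$
I would choose $a_\bs=c\sqrt{|\rho(\bs)|/|\L|}$, which satisfies $\sum_\bs a_\bs^2=c^2$, and invoke the single-shell estimate $vol(B_{\rho(\bs)}(L_p))^{1/(2|\rho(\bs)|)}\asymp |\rho(\bs)|^{-1/2}$ discussed below. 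A direct expansion of $\prod_\bs a_\bs^{2|\rho(\bs)|}vol(B_{\rho(\bs)}(L_p))$ then shows that the two occurrences of the sum $\sum_\bs|\rho(\bs)|\log|\rho(\bs)|$ cancel exactly between the $a_\bs$ factors and the shell-volume factors, leaving $c^{2|\L|}|\L|^{-|\L|}$ up to universal constants; the $(2|\L|)$-th root is therefore $\asymp |\L|^{-1/2}$, as required.

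The main obstacle is the uniform-in-$\bs$ single-shell estimate $vol(B_{\rho(\bs)}(L_p))^{1/(2|\rho(\bs)|)}\asymp |\rho(\bs)|^{-1/2}$. Its upper half again follows from $\|f\|_2\le\|f\|_p$ and Parseval. For the lower half one observes that $\rho(\bs)$ is the disjoint union of the $2^d$ orthant pieces obtained by fixing the signs of each coordinate, and each piece is a translate of a rectangular box $\prod_j[0,2^{s_j-1}-1]$. A polynomial with frequencies in a translated box $\bk_0+\Pi$ differs from one with frequencies in $\Pi$ by a unimodular factor $e^{i(\bk_0,\bx)}$, so its $L_p$-norm is unchanged and Theorem \ref{T2.1KaTe} applies verbatim to each orthant piece. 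The triangle inequality $\|f\|_p\le\sum_\epsilon\|f_\epsilon\|_p$ then embeds a $2^{-d}$-scaled Cartesian product of orthant unit balls into $B_{\rho(\bs)}(L_p)$, and the constant $2^d$ disappears when we take the $(2|\rho(\bs)|)^{-1}$-th root. This closes the single-shell estimate and completes the proof sketch.
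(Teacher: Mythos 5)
The paper does not actually prove Theorem \ref{T2.4KaTe}: it imports it from \cite{KaTe03}. Your argument reconstructs, in outline, the proof given there, and it is essentially correct. The case $1\le p\le 2$ is indeed Theorem \ref{T2.2KaTe}; the upper bound for $p>2$ follows from $B_\L(L_p)\subset B_\L(L_2)$ and Parseval; and for the lower bound the Littlewood--Paley/Minkowski inequality (the left half of (\ref{tag1.18})) correctly converts the Cartesian product of the dilated shell-balls into a subset of $B_\L(L_p)$, since $A$ identifies $\R^{2|\L|}$ with the orthogonal sum of the coordinate blocks. Your choice $a_\bs=c(|\rho(\bs)|/|\L|)^{1/2}$ does make the product of volumes telescope to $(cc'|\L|^{-1/2})^{2|\L|}$, with $c'$ the uniform single-shell constant, so the computation closes.

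The one step you should not wave through is the claim that Theorem \ref{T2.1KaTe} applies ``verbatim'' to the orthant pieces of a shell. After modulation such a piece is $\prod_j[0,2^{s_j-1}-1]$, which contains $2^{s_j-1}$ frequencies in the $j$-th coordinate; this is even for $s_j\ge 2$, whereas $\Pi(\bN,d)$ has $2N_j+1$ (odd) frequencies per coordinate, so the piece is not an integer translate of any symmetric box and the theorem as stated does not literally cover it. What is needed is the parallelepiped volume estimate for arbitrary boxes $\prod_j[a_j,b_j]$; this does hold (the arguments in \cite{TE2}, \cite{T3}, \cite{KaTe03} do not use symmetry of the box), but it must be quoted in that form. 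Be aware that passing to a large symmetric sub-box does not repair the citation automatically: a volume lower bound for the ball over a proper subset of the frequencies lives in a lower-dimensional coordinate subspace and says nothing by itself about the volume in the ambient $\R^{2|\rho^\ep(\bs)|}$. With that one reference stated in its general-box form, your proof is complete.
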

In particular, Theorem \ref{T2.4KaTe} implies
  for $d=2$ and $1\le p<\infty$ that
\be\label{2.9KaTe}
(vol(B_{\Delta Q_n}(L_p)))^{(2|\Delta Q_n|)^{-1}} \asymp |\Delta Q_n|^{-1/2} \asymp (2^nn)^{-1/2}.
\ee

The following result was obtained in \cite{KaTe03}. Denote $D:= 2|\Delta Q_n|$.

\begin{Theorem}\label{T2.5KaTe} In the case $d=2$ we have
\be\label{2.7KaTe}
(vol(B_{\Delta Q_n}(L_\infty)))^{1/D} \asymp (2^nn^2)^{-1/2}.
\ee
\end{Theorem}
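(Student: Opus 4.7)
The estimate splits into matching bounds, and I would handle the two inequalities by different techniques.

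\textbf{Lower bound.} I plan to reduce to the $p=2$ case of Theorem \ref{T2.4KaTe}, which gives $(vol\, B_{\Delta Q_n}(L_2))^{1/D} \asymp (2^n n)^{-1/2}$. For any $t \in \Tr(\Delta Q_n)$, decompose $t = \sum_{\|\bs\|_1 = n} \delta_\bs(t)$; each block lies in the parallelepiped $\rho(\bs)$ with $|\rho(\bs)| \asymp 2^n$, so Theorem \ref{NI} yields $\|\delta_\bs(t)\|_\infty \le C\, 2^{n/2} \|\delta_\bs(t)\|_2$. Summing across the $n+1$ blocks and applying Cauchy--Schwarz,
$$
\|t\|_\infty \;\le\; \sum_{\|\bs\|_1 = n} \|\delta_\bs(t)\|_\infty \;\le\; C\, 2^{n/2}\, n^{1/2}\, \|t\|_2 .
$$
Therefore $(C\, 2^{n/2} n^{1/2})^{-1}\, B_{\Delta Q_n}(L_2) \subseteq B_{\Delta Q_n}(L_\infty)$, and taking $D$-th roots of volumes,
$$
(vol\, B_{\Delta Q_n}(L_\infty))^{1/D} \;\gg\; (2^{n/2} n^{1/2})^{-1}\, (2^n n)^{-1/2} \;\asymp\; (2^n n^2)^{-1/2} .
$$

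\textbf{Upper bound.} The naive inclusion $B_{\Delta Q_n}(L_\infty) \subseteq B_{\Delta Q_n}(L_2)$ together with Theorem \ref{T2.4KaTe} gives only $(vol)^{1/D} \ll (2^n n)^{-1/2}$, missing the required factor of $n^{-1/2}$. This missing factor should reflect the fact that $L_\infty$-bounded polynomials in $\Tr(\Delta Q_n)$ are strictly more constrained than $L_2$-bounded ones because of cancellation among the $\asymp n$ dyadic blocks. My plan is to use hyperbolic-cross Riesz products in $d=2$, as highlighted in the introduction and carried out in \cite{KaTe03}: for signs $\varepsilon_\bs \in \{\pm 1\}$ and frequencies $\bk_\bs \in \rho(\bs)$, form
$$
R_\varepsilon(\bx) \;=\; \prod_{\|\bs\|_1 = n}\bigl(1 + \varepsilon_\bs \cos((\bk_\bs,\bx))\bigr),
$$
which is nonnegative with $\|R_\varepsilon\|_1 \asymp 1$; in $d=2$ the $\bk_\bs$ can be chosen so that the cross-terms in the expansion remain essentially supported in $\Delta Q_n$. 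Duality $|\int R_\varepsilon\, t| \le \|t\|_\infty \|R_\varepsilon\|_1$ translates into a linear constraint $|\sum_\bs \varepsilon_\bs \hat t(\bk_\bs)| \le C \|t\|_\infty$, and averaging over $\varepsilon$ via Khintchine gives $(\sum_\bs |\hat t(\bk_\bs)|^2)^{1/2} \ll \|t\|_\infty$, i.e.\ the block-leading coefficients of any $t \in B_{\Delta Q_n}(L_\infty)$ are confined to a thin slab of width $\asymp n^{-1/2}$ per block. Varying the $\bk_\bs$ across each $\rho(\bs)$ and combining this slab constraint with the parallelepiped estimate of Theorem \ref{T2.1KaTe} applied to each block yields the sharpened bound $(vol)^{1/D} \ll (2^n n^2)^{-1/2}$.

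The decisive obstacle is the Riesz-product bookkeeping in the upper bound: one must pick the $\bk_\bs$ (and track the cross-products $\bk_{\bs_1}\pm\bk_{\bs_2}\pm\cdots$) so that they stay essentially inside $\Delta Q_n$, controlling the perturbation of the leading Fourier coefficients, and then fit the resulting slab constraints together with the block-product volume bound. This construction is two-dimensional in an essential way---in $d>2$ the combinatorics of the hyperbolic layer $\Delta Q_n$ no longer supports Riesz products of this form, which is the same obstruction that underlies the small-ball problem for the trigonometric system and the open problem for $d>2$ flagged in the introduction.
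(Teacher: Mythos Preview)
Your lower bound argument contains a fatal arithmetic error, and the approach cannot be repaired. You compute
\[
(2^{n/2}n^{1/2})^{-1}\,(2^n n)^{-1/2}=2^{-n}n^{-1},
\]
not $(2^n n^2)^{-1/2}=2^{-n/2}n^{-1}$; the two differ by a factor of $2^{n/2}$. In fact the inclusion $(C2^{n/2}n^{1/2})^{-1}B_{\Delta Q_n}(L_2)\subseteq B_{\Delta Q_n}(L_\infty)$ that you use is essentially the trivial one obtained from $\|t\|_\infty\le |\Delta Q_n|^{1/2}\|t\|_2$, and it can never do better than $(vol)^{1/D}\gg |\Delta Q_n|^{-1}\asymp (2^n n)^{-1}$. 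The paper's route to the lower bound (Lemma~\ref{L2.1KaTe}) is entirely different: one discretizes the sup norm on a grid $G_n$ of size $\asymp 2^{2n}$, realizes $B_{\Delta Q_n}(L_\infty)$ as (a scaled copy of) a polytope $W(Y)$ cut out by $M\asymp 2^{2n}$ unit functionals in $\R^D$, and invokes Gluskin's theorem (Theorem~\ref{T2.6KaTe}), which gives $(vol\,W(Y))^{1/D}\gg (1+\log(M/D))^{-1/2}\asymp n^{-1/2}$. After rescaling by $|\Delta Q_n|^{-1/2}$ one obtains $(vol\,B_{\Delta Q_n}(L_\infty))^{1/D}\gg (Dn)^{-1/2}\asymp (2^n n^2)^{-1/2}$. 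Gluskin's inequality is precisely what captures the fact that a body defined by only $\asymp 2^{2n}$ hyperplane constraints in dimension $\asymp 2^n n$ is exponentially larger than the inscribed ball; the Nikol'skii comparison with $L_2$ throws this away.

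For the upper bound your instinct is correct: the paper records the theorem as a result of \cite{KaTe03}, and the proof there does proceed via $d=2$ hyperbolic Riesz products, exactly the mechanism you describe (and which reappears in this paper in Lemma~\ref{L2.1v} and Theorems~\ref{T1v}--\ref{T2v}). Your sketch is in the right spirit but is not yet a proof: the step ``varying the $\bk_\bs$ across each $\rho(\bs)$ and combining this slab constraint with the parallelepiped estimate'' hides all the work of turning a family of Khintchine-type linear constraints into a genuine volume bound for the whole body. You would need to make precise how the cross terms stay in $\Tr(Q_{n'})^\perp$ for an appropriate $n'$ (cf.\ Lemma~\ref{L2.1v}) and how the resulting duality inequality integrates to a volume estimate rather than merely a norm estimate on individual coefficients.
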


The following lemma from \cite{KaTe03} is an important ingredient of analysis in this paper.
For the reader's convenience we give a proof of this lemma here.

\begin{Lemma}\label{L2.1KaTe} Let $\L \subseteq [-2^n,2^n]^d$ and $N:=2|\L|$. Then
$$
(vol(B_\L(L_\infty)))^{1/N} \ge C(d) (Nn)^{-1/2}.
$$
\end{Lemma}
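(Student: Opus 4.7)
The plan is to derive the lower bound by a Gaussian comparison argument. Let $N=2|\L|$ and identify $\R^N$ with the coefficient space via the linear map $T:c=(c_{\bk,r},c_{\bk,i})_{\bk\in\L}\mapsto f_c(\bx):=\sum_{\bk\in\L}(c_{\bk,r}+ic_{\bk,i})e^{i(\bk,\bx)}$, so that $B_\L(L_\infty)=\{c\in\R^N:\|f_c\|_\infty\le 1\}$ is a symmetric convex body. The strategy is to produce a number $M\le C(d)\sqrt{Nn}$ for which the standard Gaussian measure $\gamma_N$ on $\R^N$ satisfies $\gamma_N(M\cdot B_\L(L_\infty))\ge 1/2$; a comparison with Lebesgue measure then yields the claimed bound.

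The heart of the argument is a Salem--Zygmund-type bound on $\|f_g\|_\infty$ for a standard Gaussian $g\in\R^N$. Since $\L\subseteq[-2^n,2^n]^d$, Bernstein's inequality yields $\|f\|_\infty\le 2\max_{\bx\in G}|f(\bx)|$ for every $f\in\Tr(\L)$ and a suitably chosen grid $G\subset\T^d$ of mesh $\asymp 2^{-n}$ with $|G|\asymp_d 2^{dn}$. For each fixed $\bx$, $f_g(\bx)$ is a complex Gaussian whose real and imaginary parts are independent centered Gaussians of variance $|\L|$, so
\[
\Pr\bigl[|f_g(\bx)|>t\sqrt{|\L|}\bigr]\le 2e^{-t^2/2}.
\]
Taking $t=C\sqrt{dn}$ with $C$ large enough that $2|G|e^{-C^2 dn/2}\le 1/2$ and applying a union bound over $G$ produces $M:=2C\sqrt{dn|\L|}\le C'(d)\sqrt{Nn}$ with $\Pr[\|f_g\|_\infty>M]\le 1/2$, i.e.\ $\gamma_N(M\cdot B_\L(L_\infty))\ge 1/2$.

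To close the argument, set $K:=M\cdot B_\L(L_\infty)$. Since $\gamma_N(K)\ge 1/2$ and the standard Gaussian density is bounded above by $(2\pi)^{-N/2}$,
\[
\tfrac12\le\gamma_N(K)\le(2\pi)^{-N/2}vol(K)=(2\pi)^{-N/2}M^N vol(B_\L(L_\infty)),
\]
and taking $N$-th roots gives $vol(B_\L(L_\infty))^{1/N}\ge c/M\ge C(d)(Nn)^{-1/2}$. The only substantive step is the Gaussian maximum bound; it uses nothing about $\L$ beyond the enclosing box $[-2^n,2^n]^d$, which is exactly why the estimate is uniform over all such $\L$ with a constant depending only on $d$.
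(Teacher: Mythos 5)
Your proof is correct, but it takes a genuinely different route from the paper's at the key step. Both arguments share the same discretization: the sup-norm of an element of $\Tr(\L)$, $\L\subseteq[-2^n,2^n]^d$, is controlled by its maximum over a grid of $\asymp_d 2^{dn}$ points, and at each grid point the real and imaginary parts of $f(\bx)$ are two linear functionals of the coefficient vector with $\ell_2$-norm $\sqrt{|\L|}$ (and, as you use implicitly, these two coefficient vectors are orthogonal, so the real and imaginary parts of $f_g(\bx)$ are indeed independent $N(0,|\L|)$ variables). Where the paper then invokes Gluskin's theorem as a black box --- the polytope $W(Y)=\{x:|(x,y_i)|\le 1\}$ cut out by $M$ unit functionals in $\R^N$ has $vol(W(Y))^{1/N}\gg(1+\log(M/N))^{-1/2}$ --- you instead run the standard Gaussian argument directly: a union bound over the grid shows $\gamma_N\bigl(M\cdot B_\L(L_\infty)\bigr)\ge 1/2$ for $M\asymp_d\sqrt{Nn}$, and the trivial bound $\gamma_N(K)\le(2\pi)^{-N/2}vol(K)$ converts this into the volume estimate. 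Your version is self-contained and elementary, and it loses nothing here: the exponent you get is $\sqrt{\log M}\asymp\sqrt{dn}$ rather than Gluskin's sharper $\sqrt{1+\log(M/N)}$, but since the paper also only uses the crude bound $\log(M/N)\ll dn$ at this point, the two give the same $n^{-1/2}$. The sharper form of Gluskin's theorem would only matter if $N$ were comparable to $M$, which is not exploited in this lemma. (Both proofs implicitly assume $n\ge 1$, as the stated bound is vacuous otherwise.)
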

\begin{proof} We use the following result of E. Gluskin \cite{G}. 
\begin{Theorem}\label{T2.6KaTe} Let $Y=\{\by_1,\dots,\by_M\} \subset \R^N$, $\|\by_i\|=1$, $i=1,\dots,M$ and 
$$
W(Y) := \{\bx\in \R^N:|(\bx,\by_i)| \le 1,\quad i=1,\dots,M\}.
$$
Then
$$
(vol(W(Y)))^{1/N} \ge C(1+\log (M/N))^{-1/2}.
$$
\end{Theorem}

Consider the following lattice on the $\T^d$:
$$
G_n:= \{ \bx(l)=(l_1,\dots,l_d)\pi 2^{-n-1}, \quad 1\le l_j \le 2^{n+2}, \quad l_j\in \mathbb N,\quad j=1,\dots,d\}.
$$
It is clear that $|G_n|=2^{d(n+2)}$. It is well known (see \cite{TBook}, Ch.2, Theorem 2.4) that for any $f\in \Tr([-2^n,2^n]^d)$ one has
$$
\|f\|_\infty \le C_1(d) \max_{\bx\in G_n} |f(\bx)|.
$$
Thus, for any $\L \subseteq [-2^n,2^n]^d$ we have
\be\label{2.10KaTe}
\{A(f):f\in \Tr(\L),\quad |f(\bx)|\le C_1(d)^{-1},\quad \bx\in G_n\} \subseteq B_\L(L_\infty). 
\ee
Further
$$
|f(\bx)|^2 = |\sum_{\bk\in\L}\hat f(\bk) e^{i(\bk,\bx)}|^2 = 
$$
$$
\left(\sum_{\bk\in\L}\text{Re}\hat f(\bk) \cos(\bk,\bx) - \text{Im}\hat f(\bk) \sin(\bk,\bx)\right)^2 
$$
$$
+\left(\sum_{\bk\in\L}\text{Re}\hat f(\bk) \sin(\bk,\bx) + \text{Im}\hat f(\bk) \cos(\bk,\bx)\right)^2 .
$$
We associate with each point $\bx\in G_n$ two vectors $\by^1(\bx)$ and $\by^2(\bx)$ from $\R^N$:
$$
\by^1(\bx) := \{(\cos(\bk,\bx),-\sin(\bk,\bx)),\quad \bk\in \L\},
$$
$$
 \by^2(\bx) := \{(\sin(\bk,\bx),\cos(\bk,\bx)),\quad \bk\in \L\}.
$$
Then
$$
\|\by^1(\bx)\|^2 =\|\by^2(\bx)\|^2 = |\L|
$$
and
$$
|f(\bx)|^2 = (A(f),\by^1(\bx))^2 +(A(f),\by^2(\bx))^2.
$$
It is clear that the condition $|f(\bx)| \le C_1(d)^{-1}$ is satisfied if
$$
|(A(f),\by^i(\bx))| \le 2^{-1/2}C_1(d)^{-1}, \quad i=1,2.
$$
Let now
$$
Y:=\{\by^i(\bx)/\|\by^i(\bx)\|,\quad \bx\in G_n,\quad i=1,2\}.
$$
Then $M=2^{d(n+2)+1}$ and by Theorem \ref{T2.6KaTe}
\be\label{2.11KaTe}
(vol(W(Y)))^{1/N} \gg (1+\log (M/N))^{-1/2} \gg n^{-1/2}. 
\ee
Using that the condition
$$
|(A(f),\by^i(\bx))|\le 1
$$
is equivalent to the condition
$$
|(A(f),\by^i(\bx)/\|\by^i(\bx)\|)| \le (N/2)^{-1/2}
$$ 
we get from (\ref{2.10KaTe}) and (\ref{2.11KaTe})
$$
(vol(B_\L(L_\infty)))^{1/N} \gg (Nn)^{-1/2}.
$$
This completes the proof of Lemma \ref{L2.1KaTe}
\end{proof}

\section{New lower bounds. The volumes technique}

In this section we prove lower bounds in Theorems \ref{T2.1} -- \ref{T1.7} from the Introduction.

 {\bf Proof of lower bounds in Theorems \ref{T2.1} and \ref{T1.7}.}  The lower bound in Theorem \ref{T2.1} follows from the lower bound in Theorem \ref{T1.7} with $q=1$. 
We prove the lower bounds for the $\e_k(\bW^{a,b}_q,L_q)$ with $1\le q\le \infty$ and any $d$. This lower bound is derived from the well known simple inequality (see Corollary \ref{Corollary 32.1} above)
\be\label{2.2}
N_\e(B_X,X) \ge \e^{-D}
\ee
for any $D$-dimensional real Banach space $X$. Consider as a Banach space $X$ the $\Tr(\Delta Q_n)$ with $L_q$ norm. Clearly, it can be seen as a $D$-dimensional real Banach space with $D=2|\Delta Q_n|$. It follows from the definition of $\bW^{a,b}_q$ that
\be\label{2.3}
2^{-an}n^{b(d-1)}\Tr(\Delta Q_n)_q \subset \bW^{a,b}_q.
\ee
Take $k=2|\Delta Q_n|$. Then (\ref{2.2}) implies that 
\be\label{2.4}
\e_k(\Tr(\Delta Q_n)_q, L_q\cap \Tr(\Delta Q_n)) \gg 1.
\ee
We now use one more well known fact from the entropy theory -- Proposition \ref{Corollary 31.1}. This and inequality (\ref{2.4}) imply
\be\label{2.6}
\e_k(\Tr(\Delta Q_n)_q, L_q) \gg 1.
\ee
Taking into account (\ref{2.3}) and the fact $k\asymp 2^nn^{d-1}$ we derive from (\ref{2.6}) the required lower bound for the $\e_k(\bW^{a,b}_q,L_q)$.

The lower bounds in Theorems \ref{T2.1} and \ref{T1.7} are proved.

{\bf Proof of lower bounds in Theorem \ref{T2.3}.}   We  prove the lower bound for $\e_k(\bW^{a,b}_1,L_\infty)$. This proof is somewhat similar to the proof of lower bounds in Theorem \ref{T2.1}. Instead of (\ref{2.2}) we now use the inequality (see Theorem \ref{Theorem 32.1} above)
\be\label{2.11}
N_\e(B_Y,X) \ge \e^{-D} \frac{vol(B_Y)}{vol(B_X)}
\ee
 with $B_Y := B_{\Delta Q_n}(L_1)$ and $B_X := B_{\Delta Q_n}(L_\infty)$.   It follows from the definition of $\bW^{a,b}_1$ that
\be\label{2.3'}
2^{-an}n^{b(d-1)}\Tr(\Delta Q_n)_1 \subset \bW^{a,b}_1.
\ee
Take $k=2|\Delta Q_n|$. Then (\ref{2.11}), Theorem \ref{T2.5KaTe}, and (\ref{2.9KaTe}) imply that 
\be\label{2.14}
\e_k(\Tr(\Delta Q_n)_1, L_\infty\cap \Tr(\Delta Q_n)) \gg n^{1/2}.
\ee
   Proposition \ref{Corollary 31.1} and inequality (\ref{2.14}) imply
\be\label{2.6'}
\e_k(\Tr(\Delta Q_n)_1, L_\infty) \gg n^{1/2}.
\ee
Taking into account (\ref{2.3'}) and the fact $k\asymp 2^nn^{d-1}$ we derive from (\ref{2.6'}) the required lower bound for the $\e_k(\bW^{a,b}_1,L_\infty)$.

The lower bounds in Theorem \ref{T2.3} are proved.

{\bf Proof of  Theorem \ref{T1.5}.}   We  prove the lower bound for $\e_k(\bW^{a,b}_\infty,L_1)$. This proof goes along the lines of the above proof of lower bounds in Theorem \ref{T2.3}. We use (\ref{2.11}) with $B_X := B_{\Delta Q_n}(L_1)$ and $B_Y := B_{\Delta Q_n}(L_\infty)$.   It follows from the definition of $\bW^{a,b}_\infty$ that
\be\label{3.12}
2^{-an}n^{b(d-1)}\Tr(\Delta Q_n)_\infty \subset \bW^{a,b}_\infty.
\ee
Take $k=2|\Delta Q_n|$. Then (\ref{2.11}), Lemma \ref{L2.1KaTe} with $\L=\DQn$, and Theorem \ref{T2.2KaTe} with $\L=\DQn$ imply that 
\be\label{3.13}
\e_k(\Tr(\Delta Q_n)_\infty, L_1\cap \Tr(\Delta Q_n)) \gg n^{-1/2}.
\ee
   Proposition \ref{Corollary 31.1} and inequality (\ref{3.13}) imply
\be\label{3.14}
\e_k(\Tr(\Delta Q_n)_\infty, L_1) \gg n^{-1/2}.
\ee
Taking into account (\ref{3.12}) and the fact $k\asymp 2^nn^{d-1}$ we derive from (\ref{3.14}) the required lower bound for the $\e_k(\bW^{a,b}_\infty,L_1)$.

The lower bounds in Theorem \ref{T1.5} are proved.

{\bf Proof of lower bounds in Theorem \ref{T1.6}.} The required lower bounds follow from Theorem \ref{T1.5}.

\section{Upper bounds. A general scheme}

{\bf From finite dimensional to infinite dimensional.} Let $X$ and $Y$ be two Banach spaces. We discuss a problem of estimating the entropy numbers of an approximation class, defined in the space $X$, in the norm of the space $Y$. Suppose a sequence of finite dimensional subspaces $X_n \subset X$, $n=1,\dots $, is given. Define the following class 
$$
{\bar \bW}^{a,b}_X:={\bar \bW}^{a,b}_X\{X_n\} := \{f\in X: f=\sum_{n=1}^\infty f_n,\quad  f_n\in X_n, 
$$
$$
 \|f_n\|_X \le 2^{-an}n^{b},\quad n=1,2,\dots\}.
$$
In particular,
$$
 {\bar \bW}^{a,b}_q = {\bar \bW}^{a,b(d-1)}_{L_q}\{\Tr(Q_n)\} .
$$ 
Denote $D_n:=\dim X_n$ and assume that for the unit balls $B(X_n):=\{f\in X_n: \|f\|_X\le 1\}$ we have the following upper bounds for the entropy numbers: there exist real $\al$ and nonnegative   $\ga$ and $\bt\in(0,1]$ such that 
\be\label{EA}
\e_k(B(X_n),Y) \ll n^\al \left\{\begin{array}{ll} (D_n/(k+1))^\bt (\log (4D_n/(k+1)))^\ga, &\quad k\le 2D_n,\\
2^{-k/(2D_n)},&\quad k\ge 2D_n.\end{array} \right.
\ee

\begin{Theorem}\label{T2.1G} Assume $D_n \asymp 2^n n^c$, $c\ge 0$, $a>\bt$, and subspaces $\{X_n\}$ satisfy (\ref{EA}). Then
\be\label{2.0G}
\e_k(\bar \bW^{a,b}_X\{X_n\},Y) \ll k^{-a} (\log k)^{ac+b+\al}.
\ee
\end{Theorem}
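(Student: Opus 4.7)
The plan is a layer-wise approximation that uses both regimes of \eqref{EA} at different depths. Given $f=\sum_n f_n\in\bar\bW^{a,b}_X\{X_n\}$, I approximate each layer $f_n$ separately by an element of a $2^{k_n}$-net of $2^{-an}n^b B(X_n)$ in $Y$, producing a global net of cardinality $2^{\sum_n k_n}$ and radius $\sum_n\e_n$, where $\e_n:=2^{-an}n^b\,\e_{k_n}(B(X_n),Y)$. It then suffices to arrange $\sum_n k_n\ll k$ and $\sum_n\e_n\ll k^{-a}(\log k)^{ac+b+\al}$; a harmless rescaling $k\mapsto k/C$ then yields \eqref{2.0G}.

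Given $k$, choose $N$ with $D_N\asymp k$; so $N\asymp\log k$ and $2^{-aN}\asymp k^{-a}(\log k)^{ac}$. Fix $\eta\in(0,a-\bt)$ --- the only place where the hypothesis $a>\bt$ will enter. Allocate
$$
k_n\asymp\begin{cases}(N-n+1)D_n,&1\le n\le N,\\ D_n\,2^{-(a-\eta)(n-N)/\bt},&N<n\le n^\ast,\\ 0,&n>n^\ast,\end{cases}
$$
where $n^\ast$ is the largest $n$ with the second expression $\ge 1$; using $D_n\asymp 2^n n^c$ and $(a-\eta)/\bt>1$ one computes $n^\ast\asymp\frac{a-\eta}{a-\eta-\bt}N$. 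For $1\le n\le N$ this lands in the second regime of \eqref{EA} (with a first-regime switch for the few $n$ closest to $N$ where $k_n\not\ge 2D_n$, which costs only a constant), and for $N<n\le n^\ast$ in the first regime. The sum $\sum_{n\le N}(N-n+1)D_n$ is geometric in $D_n$ and $\ll D_N$; the sum $\sum_{N<n\le n^\ast}k_n$ is geometric with ratio $2\cdot 2^{-(a-\eta)/\bt}<1$ (this is precisely where $a>\bt$ is essential) and is also $\ll D_N$. Hence $\sum_n k_n\ll D_N\asymp k$.

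Plugging these $k_n$ into \eqref{EA} yields, in both regimes,
$$
\e_n\ll 2^{-aN}N^{b+\al}\,2^{-\eta|n-N|}(1+|n-N|)^\gamma,\qquad 1\le n\le n^\ast,
$$
so the geometric sum over $n$ is dominated by its central term and gives $\sum_{n\le n^\ast}\e_n\ll 2^{-aN}N^{b+\al}$. For $n>n^\ast$ use the diameter bound $\|f_n\|_Y\le 2^{-an}n^b\,\e_0(B(X_n),Y)\ll 2^{-(a-\bt)n}n^{b+\al+c\bt+\gamma}$ (obtained from \eqref{EA} with $k=0$); the defining choice of $n^\ast$ ensures $(a-\bt)n^\ast\ge aN$ (for $\eta$ sufficiently small), so this tail too is $\ll 2^{-aN}$. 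Combining, the total radius of the constructed net is $\ll 2^{-aN}N^{b+\al}\asymp k^{-a}(\log k)^{ac+b+\al}$, as required.

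The hard part will be the simultaneous balance of the three regimes and the correct pinning down of $\eta$: the geometric-convergence condition $(a-\eta)/\bt>1$ for the $n>N$ block forces $\eta\in(0,a-\bt)$, which is exactly what makes the hypothesis $a>\bt$ both necessary and sufficient for the whole scheme; meanwhile the residual $(\log 4D_n/k_n)^\gamma$ from \eqref{EA} turns into a benign polynomial weight $(1+|n-N|)^\gamma$ that is swallowed by the exponential $2^{-\eta|n-N|}$ in the error sum. Everything else is routine bookkeeping with geometric series.
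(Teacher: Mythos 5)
Your proposal is correct and follows essentially the same route as the paper's proof of Theorem \ref{T2.1G}: a layer-wise budget $k_n$ with $\sum_n k_n\ll k$, using the exponential regime of (\ref{EA}) with $k_n\asymp (N-n)D_n$ below the pivot level $N$ (where $D_N\asymp k$) and the power regime with geometrically decaying $k_n$ above it, then summing geometric series; the paper's choices $k_l=[3a(n-l)D_l/\bt]$ and $k_l=[D_n2^{\mu(n-l)}]$, $\mu=(a-\bt)/(2\bt)$, are just a specific instance of your allocation (your explicit truncation at $n^\ast$ with a diameter bound corresponds to the paper letting $k_l$ reach $0$ and using $(D_l/(k_l+1))^\bt$). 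The only detail worth pinning down is that the implicit constant in $k_n\asymp(N-n+1)D_n$ must exceed $2(a+\eta)$ for the sub-pivot sum to be dominated by its $n=N$ term, exactly as in the paper's factor $3a/\bt$.
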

\begin{proof} For a given $k$ let $n$ be such that $k\asymp D_n \asymp 2^nn^{c}$. It follows from the definition of class $\bar \bW^{a,b}_X$ that
$$
\e_k(\bar \bW^{a,b}_X,Y) \le \sum_{l=1}^\infty 2^{-al}l^{b}\e_{k_l}(B(X_l),Y),
$$
provided $\sum_{l=1}^\infty k_l \le k$. For $l< n$ we define $k_l := [3a(n-l)D_l/\bt]$. Then 
$\sum_{l=1}^{n-1} k_l \ll k$ and by our assumption (\ref{EA})
$$
\sum_{l=1}^{n-1} 2^{-al} l^{b}\e_{k_l}(B(X_l),Y) 
$$
$$
\ll \sum_{l=1}^n 2^{-al}  l^{b+\al}2^{-k_l/(2D_l)}
$$
$$
\ll 2^{-an}n^{b+\al} \ll k^{-a}(\log k)^{ac+b+\al}.
$$
For $l\ge n$ we define $k_l := [D_n2^{\mu(n-l)}]$, $\mu:=(a-\bt)/(2\bt)$. Then $\sum_{l\ge n} k_l \ll k$.   Therefore, by our assumption (\ref{EA}) we get
$$
\sum_{l\ge n} 2^{-al}  l^{b}\e_{k_l}(B(X_l),Y) 
$$
$$
\ll \sum_{l\ge n} 2^{-al} l^{b+\al}2^{\mu(l-n)\bt} (D_l/D_n)^\bt (l-n)^\ga 
$$
$$ 
\ll 2^{-an}n^{b+\al} \ll k^{-a}(\log k)^{ac+b+\al}.
$$
Thus we proved 
\be\label{2.1G}
\e_{Ck}(\bar \bW^{a,b}_X,Y) \ll k^{-a}(\log k)^{ac+b+\al}.
\ee
Taking into account that the right hand side in (\ref{2.1G}) decays polynomially, we conclude that the upper bound in (\ref{2.0G}) holds. 

\end{proof}

\begin{Remark}\label{R=} In the case $Y=X$ Theorem \ref{T2.1G} holds without assumption (\ref{EA}). It is sufficient to use Corollary \ref{Corollary 32.1}.
\end{Remark}

{\bf From $m$-term approximation to entropy numbers.}
We discuss a technique, which is based on the following two steps strategy. At the first step we obtain bounds of the best $m$-term approximations with respect to a dictionary. At the second step we use general inequalities relating the entropy numbers to the best $m$-term approximations. We begin the detailed discussion with the second step of the above strategy.
Let $\D=\{g_j\}_{j=1}^N$ be a system of elements of cardinality $|\D|=N$ in a Banach space $X$. Consider best $m$-term approximations of $f$ with respect to $\D$
$$
\sigma_m(f,\D)_X:= \inf_{\{c_j\};\Lambda:|\Lambda|=m}\|f-\sum_{j\in \Lambda}c_jg_j\|.
$$
For a function class $F$ set
$$
\sigma_m(F,\D)_X:=\sup_{f\in F}\sigma_m(f,\D)_X.
$$
The following results are from \cite{VT138}.
\begin{Theorem}\label{T3.1} Let a compact $F\subset X$ be such that there exists a   system $\D$, $|\D|=N$, and  a number $r>0$ such that 
$$
  \sigma_m(F,\D)_X \le m^{-r},\quad m\le N.
$$
Then for $k\le N$
\begin{equation}\label{3.0}
\e_k(F,X) \le C(r) \left(\frac{\log(2N/k)}{k}\right)^r.
\end{equation}
\end{Theorem}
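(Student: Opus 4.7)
The plan is to turn the $m$-term approximation hypothesis into a covering estimate by enumerating the possible supports in $\D$ and applying the finite-dimensional volume bound from Corollary \ref{Corollary 32.1} on each. For a parameter $m \le N$ to be tuned at the end, the hypothesis gives for every $f \in F$ an approximant $\phi_f = \sum_{j\in\Lambda(f)} c_j(f)\,g_j$ with $|\Lambda(f)|=m$ and $\|f-\phi_f\|_X \le m^{-r}$. Compactness of $F$ yields a uniform bound $\|f\|_X \le M$, so every $\phi_f$ lies in the $X$-ball of radius $M+1$ inside the $m$-dimensional subspace $X_{\Lambda(f)} := \mathrm{span}(g_j : j\in\Lambda(f))$.

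There are at most $\binom{N}{m} \le (eN/m)^m$ possible supports. For each fixed $\Lambda$, Corollary \ref{Corollary 32.1} applied to $X_\Lambda$ equipped with the inherited $X$-norm produces an $\eta$-net of the $(M+1)$-ball of cardinality at most $(1+2(M+1)/\eta)^m$. The triangle inequality then covers $F$ by $(eN/m)^m(1+2(M+1)/\eta)^m$ balls of $X$-radius $\eta+m^{-r}$. Taking $\eta = m^{-r}$ gives cover radius $2m^{-r}$ and
\[
\log_2 N_{2m^{-r}}(F,X) \;\ll\; m\log(eN/m) \;+\; r\,m\log(m+1).
\]
Given $k\le N$ I would then choose $m \asymp k/\log(2N/k)$, so that $m\log(eN/m) \asymp k$ and $2m^{-r} \asymp (\log(2N/k)/k)^r$; absorbing the constant rescaling of $k$ into $C(r)$ yields the stated bound on $\e_k(F,X)$.

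The main obstacle is the coefficient-quantization contribution $r\,m\log(m+1)$, which is controlled by $m\log(eN/m)$ only when $m \ll \sqrt N$, i.e., when $k$ is not too close to $N$. To push the bound through the full range $k\le N$ I would replace the single-scale argument by a dyadic decomposition $f = \phi_1 + \sum_{j\ge 0}(\phi_{2^{j+1}}-\phi_{2^j})$, where $\phi_{2^j}$ is a best $2^j$-term approximant (so $\|\phi_{2^{j+1}}-\phi_{2^j}\|_X \le 2\cdot 2^{-jr}$ on a support of size $\le 2^{j+2}$), and encode each level with a precision $\eta_j$ matched to its scale $2^{-jr}$. The per-level bit counts then form a geometric-type series in $j$, restoring the clean bound $(\log(2N/k)/k)^r$ uniformly on $1\le k\le N$.
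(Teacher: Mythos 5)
The paper itself does not prove Theorem \ref{T3.1}: it is quoted from \cite{VT138}, so your attempt can only be compared with the proof in that reference --- and your argument is essentially that proof. The single-scale count is right as far as it goes, and you have correctly put your finger on its defect: with $m\asymp k/\log(2N/k)$ the support-enumeration cost $m\log(eN/m)\asymp k$ is affordable, but the quantization cost $\asymp r\,m\log m$ exceeds $k$ once $\log m\gg\log(2N/k)$, i.e.\ for $k$ near $N$, where the single-scale bound degrades from $N^{-r}$ to $(N^{-1}\log N)^{r}$. The dyadic repair you sketch is exactly the standard one and it does close; since you only assert that the series behaves, let me record the choice that makes it work. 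Write $f=(f-\phi_{2^{n}})+\sum_{j<n}(\phi_{2^{j+1}}-\phi_{2^{j}})+\phi_{1}$ with $2^{n}\asymp m\asymp k/\log(2N/k)$; the $j$-th block is $\le 2^{j+2}$-sparse with norm $\le 2\cdot 2^{-jr}$, and covering it, within each of the $\le (eN/2^{j+2})^{2^{j+2}}$ candidate subspaces, by Corollary \ref{Corollary 32.1} at precision $\eta_{j}:=2^{-nr}2^{-\epsilon(n-j)}$ (any fixed $\epsilon>0$) costs $\ll 2^{j}\log(eN/2^{j})+2^{j}(1+r+\epsilon)(1+n-j)$ bits; both sums over $j\le n$ are $\ll 2^{n}\log(eN/2^{n})\asymp k$, while $\sum_{j}\eta_{j}+2^{-nr}\ll m^{-r}\asymp(\log(2N/k)/k)^{r}$. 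Two minor caveats. First, Corollary \ref{Corollary 32.1} is stated for real $D$-dimensional spaces, so in a complex setting the dimension doubles, which only affects constants. Second, your nets have cardinality involving $\log(1+2(M+1)/\eta)$ with $M=\sup_{f\in F}\|f\|_{X}$, so as written your $C(r)$ also depends on $M$; this is actually forced by the statement as printed, which implicitly assumes a normalization of $F$ (e.g.\ $\sup_{f\in F}\|f\|_{X}\le 1$) --- without it the conclusion fails for small $k$ (take $F=\{tg_{1}:|t|\le T\}$ with $T$ large, for which $\sigma_{m}(F,\D)_{X}=0$ for all $m\ge 1$ yet $\e_{1}(F,X)\asymp T$). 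Neither point affects the applications in the paper, where the classes are normalized.
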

\begin{Remark}\label{R3.1} Suppose that a compact $F$ from Theorem \ref{T3.1} belongs to an $N$-dimensional subspace $X_N:=\sp(\D)$. Then in addition to (\ref{3.0}) we have 
 for $k\ge N$
\begin{equation}\label{3.0'}
\e_k(F,X) \le C(r)N^{-r}2^{-k/(2N)}.
\end{equation}
\end{Remark}
We point out that Remark \ref{R3.1} is formulated for a complex Banach space $X$. In the case of real Banach space $X$ we have $2^{-k/N}$ instead of $2^{-k/(2N)}$ in (\ref{3.0'}).

\section{Hyperbolic cross polynomials, $1<q<p<\infty$}
We now proceed to the first step of the above described strategy. First, we discuss 
the entropy numbers $\e_k(\Tr(Q_n)_q,L_p)$ in the case $1<q\le 2 \le p<\infty$. 
For the $m$-term approximation we use the following system described and studied in \cite{VT69}. We define a system of orthogonal trigonometric polynomials which is optimal in a certain sense (see \cite{VT69}) for $m$-term approximations. Variants of this system are well-known and very useful in interpolation of functions by trigonometric polynomials. We begin with a construction of the system $\U$ in the univariate case. Denote
$$
U^+_n(x) := \sum_{k=0}^{2^n-1}e^{ikx} = \frac{e^{i2^nx}-1}{e^{ix}-1},\quad
n=0,1,2,\dots;
$$
$$
U^+_{n,j}(x) := e^{i2^nx}U^+_n(x-2\pi j2^{-n}),\quad j=0,1,\dots ,2^n-1;
$$
$$
 U^-_{n,j}(x) := e^{-i2^nx}U^+_n(-x+2\pi j2^{-n}),\quad j=0,1,\dots ,2^n-1.
$$
It will be more convenient for us to normalize in $L_2$ the system of functions
 $\{U^+_{n,j},U^-_{n,j}\}$. We write
$$
u^+_{n,j}(x) := 2^{-n/2}U^+_{n,j}(x), \qquad u^-_{n,j}(x) := 2^{-n/2}U^-_{n,j}(x).
$$
For $k=2^n+j$, $n=0,1,2,\dots$, and $j=0,1,\dots ,2^n-1$, define
$$
  u_k(x):= u^+_{n,j}(x), \qquad u_{-k}(x):= u^-_{n,j}(x).
$$
The above formulas define $u_k$ for all $k\in \mathbb Z \setminus \{0\}$. Finally, define $u_0(x) =1$. Set $\U :=\{u_k\}_{k\in \Z}$. In the multivariate case of $\bx=(x_1,\dots,x_d)$ we define the system $\U^d$
as the tensor product of the univariate systems $\U$. Namely, $\U^d :=\{u_\bk(\bx)\}_{\bk\in\Z^d}$, where
$$
u_\bk(\bx):= \prod_{i=1}^d u_{k_i}(x_i), \quad \bk=(k_1,\dots,k_d).
$$
For $s\in \N$ denote
$$
\rho^+(s) := \{k:2^{s-1} \le k<2^s\}, \quad \rho^-(s) := \{-k:2^{s-1} \le k<2^s\}, 
$$
and for $s=0$ denote
$$
 \rho^+(0)=\rho^-(0)=\rho(0) :=\{0\}.
$$
Then for $\ep=+$ or $\ep=-$ we have
$$
\Tr(\rho^\ep(s)) = \sp\{u_k, \quad k\in \rho^\ep(s)\} = \sp\{e^{ikx}, \quad k\in \rho^\ep(s)\}.
$$
In the multivariate case for $\bs=(s_1,\dots,s_d)$ and $\ep=(\ep_1,\dots,\ep_d)$ denote
$$
\rho^\ep(\bs):= \rho^{\ep_1}(s_1)\times\cdots\times\rho^{\ep_d}(s_d).
$$
Then
$$
\Tr(\rho^\ep(\bs)) = \sp\{u_\bk, \quad \bk\in \rho^\ep(\bs)\} = \sp\{e^{i(\bk,\bx)}, \quad \bk\in \rho^\ep(\bs)\}.
$$

It is easy to check that for any $\bk \neq \bm$ we have
$$
\<u_\bk,u_\bm\> = (2\pi)^{-d}\int_{\T^d}u_\bk(\bx) \bar u_\bm(\bx) d\bx =0,
$$
and
$$
\|u_\bk\|_2 =1.
$$
We use the notations for $f\in L_1$
$$
f_\bk := \<f,u_\bk\>:=(2\pi)^{-d}\int_{\T^d}f(\bx) \bar u_\bk(\bx) d\bx ;
\qquad \hat f(\bk):= (2\pi)^{-d}\int_{\T^d}f(\bx)e^{-i(\bk,\bx)} d\bx 
$$
and
$$
\de^\ep_\bs(f):= \sum_{\bk\in\rho^\ep(\bs)}\hat f(\bk)e^{i(\bk,\bx)}.
$$
The following important for us analog of the Marcinkiewicz theorem holds
\be\label{tag1.17}
\|\de^\ep_\bs(f)\|^p_p \asymp \sum_{\bk\in \rho^\ep(\bs)}\|f_\bk u_\bk\|^p_p,\quad 1<p<\infty,
\ee
with constants depending on $p$ and $d$.   

We will often use the following inequalities
\be\label{tag1.18}
\left(\sum_{\bs,\ep}\|\de_\bs^\ep(f)\|^p_p\right)^{1/p} \ll \|f\|_p \ll
\left(\sum_{\bs,\ep}\|\de_\bs^\ep(f)\|^2_p\right)^{1/2}, \quad 2\le p < \infty, 
\ee
\be\label{tag1.19}
\left(\sum_{\bs,\ep}\|\de_\bs^\ep(f)\|^2_p\right)^{1/2} \ll \|f\|_p \ll \left(\sum_{\bs,\ep}\|\de_\bs^\ep(f)\|^p_p\right)^{1/p}, 1<p \le 2,
\ee
which are corollaries of the well-known Littlewood-Paley inequalities
\be\label{tag1.20}
\|f\|_p \asymp \|(\sum_\bs|\sum_\ep\de_\bs^\ep(f)|^2)^{1/2}\|_p,\quad 1<p<\infty . 
\ee

\begin{Lemma}\label{L4.1} Let $1<q\le 2\le p<\infty$. Let $\D_n^1 := \{u_\bk:\bk \in \Delta Q_n\}$. Then
\be\label{4.9}
\sigma_m(\Tr(\Delta Q_n)_q,\D_n^1)_p \ll (|\Delta Q_n|/m)^\bt,\qquad \bt =1/q-1/p.
\ee
\end{Lemma}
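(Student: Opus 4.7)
I will work at the level of coefficients $f_\bk:=\langle f,u_\bk\rangle$ in the basis $\U^d$, following the scheme: (i) bound $\|\{f_\bk\}\|_{\ell^q(\Delta Q_n)}$ from $\|f\|_q\le 1$; (ii) apply classical $\ell^q\!\to\!\ell^p$ thresholding at the coefficient level; (iii) translate the $\ell^p$ control of the residual back to an $L_p$ bound. The entire bookkeeping rests on the uniform estimate
$$
\|u_\bk\|_r\asymp 2^{n(1/2-1/r)},\qquad 1<r<\infty,\ \bk\in\Delta Q_n,
$$
which follows from the product structure $\|u_\bk\|_r=\prod_i\|u_{k_i}\|_r\asymp\prod_i 2^{s_i(1/2-1/r)}$ together with $\|\bs\|_1=n$ on every block of $\Delta Q_n$.

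\textbf{Stage (i).} Combine the lower bound in (\ref{tag1.19}) (valid since $q\le 2$) with (\ref{tag1.17}) and the $\|u_\bk\|_q$ asymptotic to obtain
$$
\sum_{\bs,\ep} 2^{n(1-2/q)}\, b_{\bs,\ep}^{2/q} \ll \|f\|_q^2 \le 1,\qquad b_{\bs,\ep}:=\sum_{\bk\in\rho^\ep(\bs)}|f_\bk|^q.
$$
Since $2/q\ge 1$, H\"older on the $\asymp n^{d-1}$ outer terms gives $\sum_{\bs,\ep} b_{\bs,\ep}\ll n^{(d-1)(1-q/2)}\,2^{n(1-q/2)}\asymp|\Delta Q_n|^{1-q/2}$, i.e.
$$
\|\{f_\bk\}\|_{\ell^q(\Delta Q_n)}\ll |\Delta Q_n|^{1/q-1/2}.
$$

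\textbf{Stages (ii)--(iii).} Let $\Lambda$ be the indices of the $m$ largest $|f_\bk|$. The elementary estimate $|f_{(m)}|\le m^{-1/q}\|\{f_\bk\}\|_{\ell^q}$ followed by the telescoping $\sum_{k>m}|f_{(k)}|^p\le |f_{(m)}|^{p-q}\|\{f_\bk\}\|_{\ell^q}^q$ (using $q\le p$) yields
$$
\Big(\sum_{\bk\notin\Lambda}|f_\bk|^p\Big)^{1/p}\ll m^{-\bt}\,|\Delta Q_n|^{1/q-1/2}.
$$
For $g:=f-\sum_{\bk\in\Lambda}f_\bk u_\bk$, run Stage (i) in reverse: apply the upper bound in (\ref{tag1.18}) ($p\ge 2$), then (\ref{tag1.17}) with the $\|u_\bk\|_p$ asymptotic, then H\"older on the $\asymp n^{d-1}$ outer terms now in the regime $2/p\le 1$. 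This produces
$$
\|g\|_p\ll |\Delta Q_n|^{1/2-1/p}\Big(\sum_{\bk\notin\Lambda}|f_\bk|^p\Big)^{1/p}\ll |\Delta Q_n|^{1/2-1/p}\cdot m^{-\bt}\cdot |\Delta Q_n|^{1/q-1/2}=(|\Delta Q_n|/m)^{\bt},
$$
which is (\ref{4.9}).

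\textbf{Main obstacle.} The delicate point is the bookkeeping on the outer sum over the $\asymp n^{d-1}$ pairs $(\bs,\ep)$ with $\|\bs\|_1=n$: the Littlewood--Paley inequalities (\ref{tag1.18}), (\ref{tag1.19}) force an outer $\ell^2$ norm, whereas the coefficient thresholding in Stage (ii) is natural in $\ell^q$ and $\ell^p$. The H\"older conversion contributes a factor $n^{(d-1)|1/2-1/r|}$ which, paired with the block factor $2^{n(1/2-1/r)}$ coming from (\ref{tag1.17}), produces precisely $|\Delta Q_n|^{1/2-1/r}$ at each stage (using $|\Delta Q_n|\asymp 2^n n^{d-1}$). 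That these factors at $r=q$ and $r=p$ match so as to deliver the clean exponent $\bt=1/q-1/p$ is the structural reason the argument runs cleanly on hyperbolic layers.
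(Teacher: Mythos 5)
Your proof is correct, and it runs on the same engine as the paper's --- the Littlewood--Paley inequalities (\ref{tag1.18}), (\ref{tag1.19}), the block relation (\ref{tag1.17}) together with $\|u_\bk\|_r\asymp 2^{n(1/2-1/r)}$, and the $\ell^q\to\ell^p$ thresholding of Lemma \ref{Lqp} --- but the bookkeeping is organized differently. The paper distributes the budget uniformly over the $\asymp n^{d-1}$ blocks of $\Delta Q_n$, keeping $m_\bs=[m/m_n]$ coefficients in each $\rho^\ep(\bs)$; the per-block estimate (\ref{4.10}) then already carries the full factor $(|\Delta Q_n|/m)^\bt\|\de_\bs(f)\|_q$, and the outer sums over $(\bs,\ep)$ are matched directly in $\ell^2$ via (\ref{tag1.18}) on the $L_p$ side and (\ref{tag1.19}) on the $L_q$ side, with no H\"older step. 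The price is that for $m\le m_n\asymp n^{d-1}$ the per-block budget is empty, so the paper needs a separate small-$m$ case handled by the Nikol'skii inequality (Theorem \ref{NI}). You instead select the $m$ largest coefficients globally, which forces you to convert the outer $\ell^2$ sums to $\ell^q$ (Stage (i)) and $\ell^p$ (Stage (iii)) by H\"older over the $\asymp n^{d-1}$ blocks; the resulting factors $n^{(d-1)(1/q-1/2)}$ and $n^{(d-1)(1/2-1/p)}$ combine with the weights $2^{n(1/2-1/q)}$ and $2^{n(1/2-1/p)}$ to give exactly $|\Delta Q_n|^{\bt}$, as you observe. What your version buys is uniformity in $m$ (no separate small-$m$ case); what the paper's version buys is that the outer sums never leave $\ell^2$, so no compensating cancellation of $n$-powers needs to be checked. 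Both arguments are confined to $1<q\le 2\le p<\infty$ for the same reason --- the direction of the inequalities (\ref{tag1.18}), (\ref{tag1.19}) --- so neither is more general than the other.
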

\begin{proof}   
Theorem \ref{NI} implies Lemma \ref{L4.1} for $m\le m_n := [|Q_n|2^{-n}]$. Let $m\ge m_n$.
Take $f\in \Tr(\Delta Q_n)$.
Then 
$$
f= \sum_{\|\bs\|_1=n} \sum_\ep\delta_\bs^\ep(f).
$$
Represent
$$
\delta_\bs^\ep(f) = \sum_{\bk\in \rho^\ep(\bs)} f_\bk u_\bk.
$$
For convenience we will omit $\ep$ in the notations $\delta_\bs^\ep(f)$, $\rho^\ep(\bs)$,
  meaning that
we are estimating a quantity $\de_\bs^\ep(f)$  for a fixed $\ep$ and all estimates we are going to do in this paper are the same for all $\ep$.   
We now need the following well known simple lemma (see, for instance, \cite{Tmon}, p.92).
\begin{Lemma}\label{Lqp} Let $a_1\ge a_2\ge \cdots \ge a_M\ge 0$ and $1\le q\le p\le \infty$.
Then for all $m<M$ one has
$$
\left(\sum_{k=m}^M a_k^p\right)^{1/p} \le m^{-\bt} \left(\sum_{k=1}^M a_k^q\right)^{1/q},\quad \bt:= 1/q-1/p.
$$
\end{Lemma}
We apply this lemma to each set of $f_\bk$, $\bk\in \rho(\bs)$, $\|\bs\|_1=n$ with $m_\bs := [m/m_n]$. Denote $G_\bs$ the set of cardinality $|G_\bs|=m_\bs$ of $\bk$ from $\rho(\bs)$ with largest $|f_\bk|$. Then by (\ref{tag1.17}) we obtain
$$
\|\sum_{\bk\in \rho(\bs)\setminus G_\bs} f_\bk u_\bk \|_p \asymp 2^{n(1/2-1/p)} \left(\sum_{\bk\in \rho(\bs)\setminus G_\bs} |f_\bk|^p\right)^{1/p}.
$$
Applying Lemma \ref{Lqp} we continue
$$
\ll 2^{n(1/2-1/p)} (m_\bs+1)^{-\bt} \left(\sum_{\bk\in \rho(\bs) } |f_\bk|^q\right)^{1/q}.
$$
Using (\ref{tag1.17}) again we obtain
\be\label{4.10}
\|\sum_{\bk\in \rho(\bs)\setminus G_\bs} f_\bk u_\bk \|_p \ll (|\Delta Q_n|/m)^\bt \|\de_\bs(f)\|_q.
\ee
Estimating the $\|\cdot\|_p$ from above by (\ref{tag1.18}) and the $\|\cdot\|_q$ from below by (\ref{tag1.19}) we complete the proof of Lemma \ref{L4.1}.
\end{proof}

It is easy to see that Lemma \ref{L4.1} implies the corresponding result for $\Tr(Q_n)$.
\begin{Lemma}\label{L4.1Q} Let $1<q\le 2\le p<\infty$. Let $\D_n' := \{u_\bk:\bk \in   Q_n\}$. Then
\be\label{4.9Q}
\sigma_m(\Tr( Q_n)_q,\D_n')_p \ll (|  Q_n|/m)^\bt,\qquad \bt =1/q-1/p.
\ee
\end{Lemma}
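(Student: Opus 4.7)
The plan is to reduce Lemma \ref{L4.1Q} to Lemma \ref{L4.1} via the hyperbolic-layer decomposition. Given $f\in\Tr(Q_n)_q$, I would write $f=\sum_{l=0}^n f_l$ with $f_l:=\sum_{\|\bs\|_1=l}\delta_\bs(f)\in\Tr(\Delta Q_l)$, and invoke the Littlewood--Paley inequality \eqref{tag1.20} (in the form of its consequence \eqref{tag1.19}) to conclude $\|f_l\|_q\ll\|f\|_q\le 1$ uniformly in $l$, so each $f_l$ lies, up to a constant, in $\Tr(\Delta Q_l)_q$.

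Next I would apply Lemma \ref{L4.1} layer by layer: for budgets $m_l$ to be chosen, it produces $h_l\in\sp\{u_\bk:\bk\in\Delta Q_l\}\subset\sp(\D_n')$ satisfying $\|f_l-h_l\|_p\ll(|\Delta Q_l|/m_l)^\bt$. Setting $h:=\sum_l h_l$ gives an $(\sum_l m_l)$-term element of $\sp(\D_n')$. To combine the layer errors without a logarithmic loss, I would use the square-function form of Littlewood--Paley: since the blocks $\delta_\bs^\ep(f-h)$ decouple across layers (they equal $\delta_\bs^\ep(f_l-h_l)$ when $l=\|\bs\|_1$), \eqref{tag1.20} together with the triangle inequality in $L_{p/2}$ (valid because $p/2\ge 1$) yields
\[
\|f-h\|_p^2\;\ll\;\sum_{l=0}^n\|f_l-h_l\|_p^2\;\ll\;\sum_{l=0}^n\bigl(|\Delta Q_l|/m_l\bigr)^{2\bt}.
\]

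It remains to allocate $\{m_l\}$ with $\sum_l m_l\le m$ so that the right-hand side is $\ll(|Q_n|/m)^{2\bt}$. Using $|\Delta Q_l|\asymp 2^l l^{d-1}$ and $|Q_n|\asymp 2^n n^{d-1}$, the Lagrange-optimal choice is $m_l\asymp m\bigl(|\Delta Q_l|/|Q_n|\bigr)^{2\bt/(2\bt+1)}$, or, less sharply, any exponential taper $m_l\asymp m\cdot 2^{-\eta(n-l)}$ with $\eta\in(0,1)$. With either allocation, both $\sum_l m_l$ and $\sum_l(|\Delta Q_l|/m_l)^{2\bt}$ become geometric series in $l$ dominated by the top index $l=n$; since $|Q_n|\asymp|\Delta Q_n|$, this gives $\sum_l m_l\asymp m$ and the error sum $\asymp(|Q_n|/m)^{2\bt}$, hence $\|f-h\|_p\ll(|Q_n|/m)^\bt$. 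On layers where the optimizer would assign $m_l>|\Delta Q_l|$, one simply sets $h_l=f_l$, which only improves matters.

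The main obstacle is to avoid the spurious $\sqrt{n}$ factor that a naive uniform allocation $m_l\propto|\Delta Q_l|$ produces even with the $\ell^2$ Littlewood--Paley combination; the remedy is the exponential concentration of the budget near the top layer, exploiting $|Q_n|\asymp|\Delta Q_n|$.
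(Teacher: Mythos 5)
Your overall strategy---decompose $f\in\Tr(Q_n)_q$ into hyperbolic layers, apply Lemma \ref{L4.1} on each layer with a budget concentrated near $l=n$, and sum---is exactly the reduction the paper has in mind (it gives no proof, saying only that Lemma \ref{L4.1} ``implies'' Lemma \ref{L4.1Q}), and your diagnosis that a uniform allocation combined with a crude summation over the $\asymp n$ layers risks a spurious $\sqrt n$ is correct. However, the step where you combine the layer errors has a genuine gap. From (\ref{tag1.20}) and Minkowski's inequality in $L_{p/2}$ you obtain $\|f-h\|_p^2\ll\sum_{\bs,\ep}\|\de_\bs^\ep(f-h)\|_p^2$, an $\ell^2$ sum over the individual dyadic blocks; to regroup this into $\sum_l\|f_l-h_l\|_p^2$ you need the reverse per-layer inequality $\sum_{\|\bs\|_1=l,\ep}\|\de_\bs^\ep(g)\|_p^2\ll\|g\|_p^2$ for $g\in\Tr(\DQl)$. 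For $p>2$ and $d\ge2$ this is false in general: the layer $\DQl$ is a union of $\asymp l^{d-1}$ Littlewood--Paley blocks, only the $\ell^p$ version (the left inequality in (\ref{tag1.18})) holds, and converting $\ell^p$ to $\ell^2$ by H\"older costs a factor $l^{(d-1)(1-2/p)}$ (realized, e.g., by blocks whose pieces are essentially disjointly supported in $\bx$). The same objection applies if you instead try to apply Minkowski directly to the layer square function, since the needed lower bound $\|f-h\|_p\ll\|(\sum_l|f_l-h_l|^2)^{1/2}\|_p$ for the coarse layer decomposition is not among (\ref{tag1.18})--(\ref{tag1.20}) and is not available. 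Carried through, your argument yields an extra factor $n^{(d-1)(1/2-1/p)}$, which would propagate into the entropy bounds.

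The repair is to avoid passing through the quantities $\|f_l-h_l\|_p$ altogether and to keep the estimate at the level of blocks, using inequality (\ref{4.10}) from inside the proof of Lemma \ref{L4.1}: with a per-layer budget $m_l$ distributed over the blocks of $\DQl$ one gets $\|\de_\bs^\ep(f_l-h_l)\|_p\ll(|\DQl|/m_l)^\bt\|\de_\bs^\ep(f)\|_q$ for every $\bs$ with $\|\bs\|_1=l$. Choosing simply $m_l\asymp m|\DQl|/|Q_n|$ (a geometric sequence in $n-l$, so $\sum_l m_l\ll m$) makes the prefactor $\ll(|Q_n|/m)^\bt$ uniformly in $l$ and $\bs$, and then a single application of the right-hand inequality in (\ref{tag1.18}) followed by the left-hand inequality in (\ref{tag1.19}) (here $q\le2$ is used) over all blocks of all layers at once gives $\|f-h\|_p\ll(|Q_n|/m)^\bt\|f\|_q$. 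No Lagrange optimization or exponential taper is needed, and no $\sqrt n$ appears, because the quantity $\sum_{\bs,\ep}\|\de_\bs^\ep(f)\|_q^2\ll\|f\|_q^2$ is controlled globally rather than layer by layer.
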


We now apply the second step of the strategy described in  Section 4. 
Theorem \ref{T3.1}, Remark \ref{R3.1} and Lemma \ref{L4.1Q} imply the following lemma.

\begin{Lemma}\label{L4.2} Let $1<q\le 2\le p <\infty$ and $\bt:=1/q-1/p$. Then
\be\label{4.11}
\e_k(\Tr( Q_n)_q,L_p) \ll  \left\{\begin{array}{ll} (| Q_n|/k)^\bt (\log (4| Q_n|/k))^\bt, &\quad k\le 2| Q_n|,\\
2^{-k/(2|  Q_n|)},&\quad k\ge 2| Q_n|.\end{array} \right.
\ee
\end{Lemma}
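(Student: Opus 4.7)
The plan is to deduce this lemma by a direct assembly of the two-step strategy advertised in Section~4: use the $m$-term approximation bound of Lemma \ref{L4.1Q} as input, and feed it into Theorem \ref{T3.1} together with Remark \ref{R3.1}. There is nothing genuinely hard left to do; the work has been packaged into those earlier statements.

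Concretely, I would take the dictionary $\D=\D_n'=\{u_\bk:\bk\in Q_n\}$, so that $N:=|\D|=|Q_n|$, and view $X_N:=\Tr(Q_n)$ as a complex subspace of $L_p$ with $\sp(\D)=X_N$. Lemma \ref{L4.1Q} gives
$$
\sigma_m(\Tr(Q_n)_q,\D_n')_p \ll (|Q_n|/m)^\bt, \qquad m\le N.
$$
Normalize by defining $F:=c\,\Tr(Q_n)_q$ with $c\asymp |Q_n|^{-\bt}$; then $\sigma_m(F,\D_n')_p\le m^{-\bt}$ for $m\le N$, which is precisely the hypothesis of Theorem \ref{T3.1} with $r=\bt$. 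That theorem immediately yields, for $k\le N$,
$$
\e_k(F,L_p)\le C(\bt)\left(\frac{\log(2N/k)}{k}\right)^\bt.
$$
Scaling back by $c^{-1}\asymp |Q_n|^\bt$ gives the first branch of the claimed bound for $k\le |Q_n|$. Since $F\subset X_N$, Remark \ref{R3.1} (in its complex-space form, which applies because $\Tr(Q_n)$ is a complex subspace of $L_p$) adds the estimate
$$
\e_k(F,L_p)\le C(\bt)\,N^{-\bt}\,2^{-k/(2N)}\qquad\text{for } k\ge N,
$$
and scaling by $|Q_n|^\bt$ produces exactly $2^{-k/(2|Q_n|)}$, i.e.\ the second branch.

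It remains only to stitch the two branches across the intermediate window $|Q_n|\le k\le 2|Q_n|$ used in the statement. At $k=|Q_n|$ the polynomial branch is $\asymp 1$; at $k=2|Q_n|$ the exponential branch is $\asymp 1$; and both bounds are monotone decreasing, so taking the pointwise minimum recovers a uniform $O(1)$ bound on this window which is dominated by either expression in the lemma. Hence combining the two regimes yields the stated two-case inequality.

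The only step with any substance is the normalization and the bookkeeping of the complex-versus-real factor of $2$ in Remark \ref{R3.1}; the factor $2|Q_n|$ in the exponent comes from treating $\Tr(Q_n)$ as a complex subspace of complex dimension $|Q_n|$. There is no genuine obstacle, since Lemma \ref{L4.1Q}—which did require the harmonic-analytic input (Littlewood–Paley, the Marcinkiewicz-type identity \eqref{tag1.17}, and the rearrangement Lemma \ref{Lqp})—has already absorbed all the analytic difficulty.
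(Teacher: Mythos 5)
Your proposal is correct and follows exactly the route the paper takes: the paper's entire proof of this lemma is the one-line observation that Theorem \ref{T3.1}, Remark \ref{R3.1} and Lemma \ref{L4.1Q} imply it, and your normalization, the complex-dimension bookkeeping giving $2^{-k/(2|Q_n|)}$, and the patch over the window $|Q_n|\le k\le 2|Q_n|$ are just the routine details left implicit there.
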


 We now extend Lemma \ref{L4.2} to the case $1<q<p<\infty$. We will use a decomposition technique. 

\begin{Lemma}\label{L35.3pq} Let $1\le u<q<p\le \infty$. For any $f\in L_q(\Omega)$, $\|f\|_q\le 1$, and any positive numbers $a$, $b$ there exists a representation $f=f_1+f_2$ such that
\be\label{35.3pq}
a\|f_1\|_u \le a^{1-\theta}b^{\theta},\quad b\|f_2\|_p \le a^{1-\theta}b^{\theta},\quad \theta:=(1/u-1/q)(1/u-1/p)^{-1} .
\ee
\end{Lemma}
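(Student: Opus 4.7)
The plan is to prove this via the classical truncation argument underlying real interpolation. Fix a threshold $\lambda>0$ (to be chosen) and split
$$
f_1 := f\cdot \mathbf 1_{\{|f|>\lambda\}},\qquad f_2 := f\cdot \mathbf 1_{\{|f|\le\lambda\}}.
$$
The heuristic is that $f_1$ captures the ``heavy tail'' of $f$, which lies naturally in the larger space $L_u$, while $f_2$ is bounded by $\lambda$ and thus lies in $L_p$.

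Next I would obtain the two size estimates from $\|f\|_q\le 1$. Since $u<q$, on $\{|f|>\lambda\}$ we have $|f|^u\le \lambda^{u-q}|f|^q$, so
$$
\|f_1\|_u^u \le \lambda^{u-q}\|f\|_q^q \le \lambda^{u-q},\qquad \text{i.e.}\qquad \|f_1\|_u\le \lambda^{1-q/u}.
$$
Symmetrically, if $p<\infty$, then $q<p$ implies $|f|^p\le \lambda^{p-q}|f|^q$ on $\{|f|\le\lambda\}$, giving $\|f_2\|_p\le \lambda^{1-q/p}$; if $p=\infty$ we simply have $\|f_2\|_\infty\le \lambda$, which is the same formula with the convention $1/p=0$.

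Finally I would choose $\lambda$ to balance the two weighted norms: set $a\lambda^{1-q/u}=b\lambda^{1-q/p}$, which forces
$$
\lambda = (a/b)^{1/[q(1/u-1/p)]}.
$$
A direct computation using $q-u=uq(1/u-1/q)$ and $p-q=pq(1/q-1/p)$ identifies the exponents: the exponent of $(a/b)$ in $\lambda^{1-q/u}$ equals $-\theta$, while in $\lambda^{1-q/p}$ it equals $1-\theta$. Hence
$$
a\|f_1\|_u \le a\cdot (a/b)^{-\theta}=a^{1-\theta}b^{\theta},\qquad b\|f_2\|_p\le b\cdot (a/b)^{1-\theta}=a^{1-\theta}b^{\theta},
$$
which is exactly the claim.

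There is no genuine obstacle here — the lemma is essentially the $K$-functional decomposition for the couple $(L_u,L_p)$ at the intermediate index $q$. The only point that requires some care is the bookkeeping of the exponents to verify that the chosen $\lambda$ produces precisely the product $a^{1-\theta}b^{\theta}$ with the prescribed value $\theta=(1/u-1/q)(1/u-1/p)^{-1}$, and that the argument is uniform in the case $p=\infty$.
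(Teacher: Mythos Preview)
Your proof is correct and follows essentially the same truncation argument as the paper: split $f$ at a threshold $\lambda$ (the paper calls it $T$), bound the unbounded part in $L_u$ and the bounded part in $L_p$, and choose $\lambda=(a/b)^{1/[q(1/u-1/p)]}$. The only cosmetic difference is that you obtain the two norm bounds via direct pointwise inequalities $|f|^u\le\lambda^{u-q}|f|^q$ and $|f|^p\le\lambda^{p-q}|f|^q$, whereas the paper uses H\"older together with the interpolation inequality $\|g\|_p\le\|g\|_q^{q/p}\|g\|_\infty^{1-q/p}$; both yield the identical estimates $\|f_1\|_u\le\lambda^{1-q/u}$ and $\|f_2\|_p\le\lambda^{1-q/p}$.
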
 
\begin{proof} Let $f_T$ denote the $T$ cut off of $f$: $f_T(x)=f(x)$ if $|f(x)|\le T$ and $f_T(x)=0$ otherwise. Clearly, $\|f_T\|_\infty \le T$.  Set $f^T := f-f_T$. 
We now estimate the $L_u$ norm of the $f^T$. Let $E:=\{x:f^T(x)\neq 0\}$. First, we bound from above the measure $|E|$ of $E$. We have
$$
1\ge \int_\Omega |f(\bx)|^qd\bx \ge \int_ET^qd\bx = T^q|E|.
$$
Second, we bound the $\|f^T\|_u$
$$
\|f^T\|_u^u = \int_E|f^T(\bx)|^ud\bx \le \left(\int_E|f^T(\bx)|^qd\bx\right)^{u/q}|E|^{1-u/q} \le T^{u-q}.
$$
Third, we bound the $\|f_T\|_p$. Using the inequality
$$
\|g\|_p \le \|g\|_q^{q/p}\|g\|_\infty^{1-q/p}
$$
we obtain
$$
\|f_T\|_p \le T^{1-q/p}.
$$
Specifying $T=(a/b)^{(q(1/u-1/p))^{-1}}$ we get
$$
a\|f^T\|_u \le a^{1-\theta}b^{\theta},\qquad b\|f_T\|_p \le a^{1-\theta}b^{\theta}.
$$
This proves the lemma.
\end{proof}

We now derive from Lemma \ref{L35.3pq} the following inequality for the entropy numbers.

\begin{Lemma}\label{L2qp} For $1<u<q<p<\infty$ we have for $\theta:=(1/u-1/q)(1/u-1/p)^{-1}$
$$
\e_{k+l}(\Tr(Q_n)_q,L_p) \le C(u,p,d) \left(\e_k(\Tr(Q_n)_u,L_p)\right)^{1-\theta}\left(\e_l(\Tr(Q_n)_p,L_p)\right)^{\theta}.
$$
\end{Lemma}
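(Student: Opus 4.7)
The plan is to feed each $f \in \Tr(Q_n)$ with $\|f\|_q \le 1$ into Lemma \ref{L35.3pq} to split $f$ into an $L_u$-small piece and an $L_p$-small piece, then project the two pieces back into $\Tr(Q_n)$ so that pre-built $L_p$-nets for $\Tr(Q_n)_u$ and $\Tr(Q_n)_p$ become applicable, and finally combine the two nets by Minkowski addition. Concretely, set $a := \e_k(\Tr(Q_n)_u,L_p)$, $b := \e_l(\Tr(Q_n)_p,L_p)$ and invoke Lemma \ref{L35.3pq} with these values to obtain $f = g_1 + g_2$ with
$$
\|g_1\|_u \le a^{-\theta}b^{\theta},\qquad \|g_2\|_p \le a^{1-\theta}b^{\theta-1}.
$$

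The first real step is to move $g_1,g_2$ back inside $\Tr(Q_n)$. I would use the hyperbolic-cross Fourier projection $S_{Q_n}g := \sum_{\|\bs\|_1\le n}\delta_\bs(g)$. By the Littlewood--Paley inequalities (\ref{tag1.18})--(\ref{tag1.20}), $S_{Q_n}$ is bounded on $L_r$ for every $1<r<\infty$ with a constant depending only on $r$ and $d$, and it acts as the identity on $\Tr(Q_n)$. Applying it to $f = g_1+g_2$ gives the in-space decomposition $f = h_1 + h_2$ with $h_j := S_{Q_n}g_j \in \Tr(Q_n)$ and
$$
\|h_1\|_u \le C(u,d)\,a^{-\theta}b^{\theta},\qquad \|h_2\|_p \le C(p,d)\,a^{1-\theta}b^{\theta-1}.
$$

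Next I would run the two nets in parallel. Let $\mathcal N_k \subset \Tr(Q_n)$ be an optimal $a$-net of $\Tr(Q_n)_u$ in $L_p$, of cardinality $2^k$, and $\mathcal M_l \subset \Tr(Q_n)$ an optimal $b$-net of $\Tr(Q_n)_p$ in $L_p$, of cardinality $2^l$. Since $h_1/\|h_1\|_u \in \Tr(Q_n)_u$ and $h_2/\|h_2\|_p \in \Tr(Q_n)_p$, there exist $\tilde h_1 \in \|h_1\|_u\,\mathcal N_k$ and $\tilde h_2 \in \|h_2\|_p\,\mathcal M_l$ with $\|h_1-\tilde h_1\|_p \le \|h_1\|_u\, a$ and $\|h_2-\tilde h_2\|_p \le \|h_2\|_p\, b$. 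Adding,
$$
\|f-(\tilde h_1+\tilde h_2)\|_p \le \|h_1\|_u\,a + \|h_2\|_p\,b \le C(u,p,d)\,a^{1-\theta}b^{\theta}.
$$
The Minkowski sum $\mathcal N_k + \mathcal M_l$ has at most $2^{k+l}$ elements, which yields the desired bound on $\e_{k+l}(\Tr(Q_n)_q,L_p)$.

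The only subtle step is the second one: guaranteeing that the raw decomposition produced by Lemma \ref{L35.3pq}, whose parts are merely $L_u$ and $L_p$ functions with no frequency restriction, can be returned to $\Tr(Q_n)$ without losing the norm bounds. This is precisely where the hypothesis $1 < u$ (and $p<\infty$) is used, because the $L_r$-boundedness of the hyperbolic projection $S_{Q_n}$ delivered by Littlewood--Paley fails at the endpoints. Once this projection step is in place, the rest is a routine Minkowski-addition of nets and the interpolation identity built into Lemma \ref{L35.3pq}.
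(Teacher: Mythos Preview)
Your overall strategy is exactly the paper's: split via Lemma~\ref{L35.3pq}, push the pieces back into $\Tr(Q_n)$ with the projection $S_{Q_n}$ (bounded on $L_u$ and $L_p$ because $1<u,p<\infty$), and then combine pre-built $L_p$-nets for $\Tr(Q_n)_u$ and $\Tr(Q_n)_p$ by Minkowski addition. The paper carries out precisely these steps.

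There is, however, a slip in your counting step. You approximate $h_1$ by an element of $\|h_1\|_u\,\mathcal N_k$ and $h_2$ by an element of $\|h_2\|_p\,\mathcal M_l$, and then assert that the resulting centers lie in the Minkowski sum $\mathcal N_k+\mathcal M_l$. They do not: the scalings $\|h_1\|_u$ and $\|h_2\|_p$ depend on $f$, so as $f$ ranges over $\Tr(Q_n)_q$ you are drawing from infinitely many rescaled copies of the nets, not from a single set of cardinality $2^{k+l}$. The fix is to normalize by the \emph{uniform} bounds you already have, not by the individual norms: since $\|h_1\|_u\le C(u,d)\,a^{-\theta}b^{\theta}$, the element $h_1/(C(u,d)\,a^{-\theta}b^{\theta})$ lies in $\Tr(Q_n)_u$, so one approximates it from the fixed set $\mathcal N_k$ and scales back by the fixed factor $C(u,d)\,a^{-\theta}b^{\theta}$; similarly for $h_2$. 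This is exactly how the paper proceeds (it introduces a single constant $\e=a^{1-\theta}b^{\theta}\max\{C(u,d),C(p,d)\}$ and uses the fixed centers $\e y_i/a+\e z_j/b$). A second, minor point: setting $a=\e_k(\cdot)$ and $b=\e_l(\cdot)$ exactly presumes the infima are attained; the paper takes $a>\e_k$, $b>\e_l$ and lets them decrease, which is the clean way to finish.
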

\begin{proof} Let $t\in \Tr(Q_n)_q$. Applying Lemma \ref{L35.3pq} we split the polynomial $t$ into a sum $t=f_1+f_2$ satisfying (\ref{35.3pq}). Consider 
$$
t_1:= S_{Q_n}(f_1),\qquad t_2:= S_{Q_n}(f_2).
$$
Then
$$
at_1 \in \Tr(Q_n)_u C(u,d) a^{1-\theta}b^{\theta} \quad \text{and}\quad bt_2 \in \Tr(Q_n)_p C(p,d) a^{1-\theta}b^{\theta}.
$$
 Let $a$ and $b$ be such that 
$$
a>\e_k(\Tr(Q_n)_u,L_p),\qquad b>\e_l(\Tr(Q_n)_p,L_p).
$$
Find $y_1,\dots,y_{2^k}$ and $z_1,\dots,z_{2^l}$ such that
$$
\Tr(Q_n)_u\subset \cup_{i=1}^{2^k}B_{L_p}(y_i,a),\qquad \Tr(Q_n)_p\subset \cup_{j=1}^{2^l}B_{L_p}(z_j,b).
$$
Take any $t\in \Tr(Q_n)_q$. Set $\e:=a^{1-\theta}b^{\theta}\max\{C(u,d),C(p,d)\}$ and as above find $t_1$ and $t_2$ from $\Tr(Q_n)$ such that $t=t_1+t_2$ and
$$
a\|t_1\|_u \le  \e,\qquad b\|t_2\|_p \le  \e.
$$
Clearly, for some $i$
\begin{equation}\label{35.10gen}
at_1/\e\in B_{L_p}(y_i,a)\quad \Rightarrow \quad t_1\in B_{L_p}(\e y_i/a,\e)
\end{equation}
and for some $j$
\begin{equation}\label{35.11gen}
bt_2/\e\in B_{L_p}(z_j,b)\quad \Rightarrow \quad t_2\in B_{L_p}(\e z_j/b,\e).
\end{equation}

Consider the sets $G_{i,j}:=B_{L_p}(\e y_i/a+\e z_j/b, 2\e)$, $i=1,\dots, 2^k$, $j=1,\dots, 2^l$. Relations (\ref{35.10gen}) and (\ref{35.11gen}) imply $t\in G_{i,j}$. Thus 
$$
\e_{k+l}(\Tr(Q_n)_q,L_p) \le 2\e.
$$

\end{proof}

Let $1\le q< p\le v\le \infty$. The following simple inequality
$$
\|g\|_p \le \|g_q\|_q^{1-\mu}\|g\|_v^{\mu},\qquad \mu:= (1/q-1/p)(1/q-1/v)^{-1}
$$
implies that
\be\label{qp<2}
\e_k(\Tr(Q_n)_q,L_p) \le 2\left(\e_k(\Tr(Q_n)_q,L_v)\right)^\mu.
\ee

\begin{Lemma}\label{L4.2gen} Let $1<q< p <\infty$ and $\bt:=1/q-1/p$. Then
$$
\e_k(\Tr(Q_n)_q,L_p) \ll  \left\{\begin{array}{ll} (|Q_n|/k)^\bt (\log (4|Q_n|/k))^\bt, &\quad k\le 2|Q_n|,\\
2^{-k/(2|Q_n|)},&\quad k\ge 2| Q_n|.\end{array} \right.
$$
\end{Lemma}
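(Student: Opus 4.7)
The plan is to bootstrap from Lemma \ref{L4.2} (which already settles the range $1<q\le 2\le p<\infty$) to the full range $1<q<p<\infty$ by splitting into two sub-cases according to whether the pair $(q,p)$ lies below or above the Hilbert line: (A) $1<q<p\le 2$, and (B) $2\le q<p<\infty$. The two decomposition tools already developed in the paper, namely the elementary interpolation (\ref{qp<2}) and the splitting Lemma \ref{L2qp}, will handle the two cases respectively. In both, the key algebraic identity is that the reduction factor is exactly tuned so that the target exponent $\bt=1/q-1/p$ comes out correctly.

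\emph{Case A} ($1<q<p\le 2$). Apply (\ref{qp<2}) with $v=2$ to obtain
\[
\e_k(\Tr(Q_n)_q,L_p)\le 2\bigl(\e_k(\Tr(Q_n)_q,L_2)\bigr)^{\mu},\qquad \mu=\frac{1/q-1/p}{1/q-1/2}\in(0,1].
\]
The factor on the right is covered by Lemma \ref{L4.2} with $p$ replaced by $2$. In the polynomial regime $k\le 2|Q_n|$, raising the bound to the power $\mu$ produces exponents $\mu(1/q-1/2)=\bt$ on both $|Q_n|/k$ and $\log(4|Q_n|/k)$, which is exactly what is claimed. In the exponential regime $k\ge 2|Q_n|$, the bound becomes $2^{-\mu k/(2|Q_n|)}$, i.e. exponential decay in $k/|Q_n|$ with a constant depending only on $q,p$.

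\emph{Case B} ($2\le q<p<\infty$). Apply Lemma \ref{L2qp} with $u=2$:
\[
\e_{k+l}(\Tr(Q_n)_q,L_p)\le C\bigl(\e_k(\Tr(Q_n)_2,L_p)\bigr)^{1-\theta}\bigl(\e_l(\Tr(Q_n)_p,L_p)\bigr)^{\theta},
\]
with $\theta=(1/2-1/q)/(1/2-1/p)\in[0,1)$. The first factor is controlled by Lemma \ref{L4.2} (with $q$ replaced by $2$); the second by Corollary \ref{Corollary 32.1}, which gives $\e_l(\Tr(Q_n)_p,L_p)\le 3\cdot 2^{-l/(2|Q_n|)}$. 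Taking $l=k$ so the total index is $2k$: for $2k\le 2|Q_n|$ the first factor contributes $(|Q_n|/k)^{1/2-1/p}(\log(4|Q_n|/k))^{1/2-1/p}$ while the second is $O(1)$, and the arithmetic $(1-\theta)(1/2-1/p)=\bt$ again produces the claimed polynomial rate with the correct power of the logarithm. In the exponential regime both factors contribute $2^{-ck/|Q_n|}$, so does the product.

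The only subtlety is cosmetic: the constant in the exponential decay produced by this argument is of the form $2^{-k/(C|Q_n|)}$ with $C=C(q,p,d)$ rather than exactly $2^{-k/(2|Q_n|)}$. This is harmless for every downstream use in the paper, since the two-step scheme of Theorem \ref{T2.1G} only requires a bound of this form up to a multiplicative constant in the exponent; equivalently one replaces $D_n$ by $CD_n$ in the choice of $k_l$. I expect this bookkeeping around the exponential constant, together with making sure the two sub-cases meet correctly at $p=2$ or $q=2$ (where both agree with Lemma \ref{L4.2} directly), to be the only mild technical point; the hard analytic work has already been done in Lemma \ref{L4.2} and Lemmas \ref{L35.3pq}--\ref{L2qp}.
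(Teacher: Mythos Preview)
Your proof is correct and follows essentially the same route as the paper: Case~A via (\ref{qp<2}) with $v=2$, Case~B via Lemma~\ref{L2qp} with $u=2$, both feeding back into Lemma~\ref{L4.2}. Two small simplifications from the paper are worth noting. First, the paper dispatches the exponential regime $k\ge 2|Q_n|$ at the outset by invoking Corollary~\ref{Corollary 32.1}: once the polynomial bound gives $\e_{2|Q_n|}(\Tr(Q_n)_q,L_p)\ll 1$, refining that covering inside the $2|Q_n|$-dimensional space $(\Tr(Q_n),L_p)$ yields exactly $2^{-k/(2|Q_n|)}$, so no constant in the exponent needs to be tracked. Second, in Case~B the paper takes $l=0$ rather than $l=k$ in Lemma~\ref{L2qp}; then the second factor is $\e_0(\Tr(Q_n)_p,L_p)^{\theta}\le 1$ and one obtains the bound for $\e_k$ directly, without passing through $\e_{2k}$.
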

 \begin{proof} In the case $1<q\le 2\le p<\infty$ Lemma \ref{L4.2gen} follows directly from Lemma \ref{L4.2}. Corollary \ref{Corollary 32.1} shows that it is sufficient to prove Lemma \ref{L4.2gen} for $k\le 2|Q_n|$.
 Consider first the case $2<q<p<\infty$. Applying Lemma  \ref{L2qp} with $u=2$, $l=0$ and using Lemma \ref{L4.2} with $q=2$ we obtain the required bound. Second, in the case $1<q<p<2$ the required bound follows from inequality (\ref{qp<2}) with $v=2$ and Lemma \ref{L4.2} with $p=2$.
 \end{proof}
 
 Theorem \ref{T2.1G} and Lemma \ref{L4.2gen} imply the upper bounds in the following theorem. 

\begin{Theorem}\label{T4.2} Let $1<q<p <\infty$ and $a>\bt:=1/q-1/p$. Then
\be\label{4.12}
\e_k(\bW^{a,b}_q,L_p)\asymp \e_k(\bar \bW^{a,b}_q,L_p) \asymp k^{-a} (\log k)^{(d-1)(a+b)}.
\ee
\end{Theorem}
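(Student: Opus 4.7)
\textbf{Plan of proof of Theorem \ref{T4.2}.}

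\emph{Upper bounds.} The plan is to recognize $\bar\bW^{a,b}_q$ as an instance of the abstract class of Section~4: by definition we have $\bar\bW^{a,b}_q=\bar\bW^{a,b(d-1)}_{L_q}\{\Tr(Q_n)\}$, with $X=L_q$, $Y=L_p$, and $D_n=|Q_n|\asymp 2^n n^{d-1}$, so the exponent $c$ in Theorem~\ref{T2.1G} equals $d-1$. Lemma~\ref{L4.2gen} is exactly the hypothesis (\ref{EA}) for these subspaces with parameters $\alpha=0$, $\beta=1/q-1/p$, and $\gamma=\beta$. Since the assumption $a>\beta$ is precisely the compactness condition required by Theorem~\ref{T2.1G}, applying that theorem yields
$$
\e_k(\bar\bW^{a,b}_q,L_p)\ll k^{-a}(\log k)^{a(d-1)+b(d-1)+0}=k^{-a}(\log k)^{(d-1)(a+b)}.
$$
The corresponding upper bound for $\bW^{a,b}_q$ follows from the equivalence of the two classes in the range $1<q<\infty$ mentioned after the definition of $\bar\bW^{a,b}_q$ in the Introduction.

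\emph{Lower bounds.} The plan is to adapt the volume-comparison argument used in the proof of Theorem~\ref{T1.5}. Observe that directly from the definitions one has the inclusion
$$
2^{-an}n^{b(d-1)}\,\Tr(\Delta Q_n)_q\subset \bW^{a,b}_q\subset \bar\bW^{a,b}_q,
$$
so it suffices to bound $\e_k(\Tr(\Delta Q_n)_q,L_p)$ from below for $k\asymp |\Delta Q_n|$. Apply the volume inequality (\ref{2.11}) with $B_Y=B_{\Delta Q_n}(L_q)$ and $B_X=B_{\Delta Q_n}(L_p)$. Since $1<q<p<\infty$ and $\Delta Q_n=\bigcup_{\|\bs\|_1=n}\rho(\bs)$ is of the special hyperbolic form, Theorem~\ref{T2.4KaTe} applies to both numerator and denominator and gives
$$
\bigl(vol(B_{\Delta Q_n}(L_q))\bigr)^{(2|\Delta Q_n|)^{-1}}\asymp \bigl(vol(B_{\Delta Q_n}(L_p))\bigr)^{(2|\Delta Q_n|)^{-1}}\asymp |\Delta Q_n|^{-1/2},
$$
so their ratio is bounded below by $c^{D}$ with $D=2|\Delta Q_n|$ and an absolute constant $c>0$. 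Taking $k=D$ we conclude $\e_k(\Tr(\Delta Q_n)_q,L_p\cap\Tr(\Delta Q_n))\gg 1$; passing to the full space $L_p$ via Proposition~\ref{Corollary 31.1} preserves the bound up to a factor of $2$, and scaling by $2^{-an}n^{b(d-1)}$ together with $k\asymp 2^n n^{d-1}$ (so $\log k\asymp n$) yields
$$
\e_k(\bar\bW^{a,b}_q,L_p)\ge\e_k(\bW^{a,b}_q,L_p)\gg 2^{-an}n^{b(d-1)}\asymp k^{-a}(\log k)^{(d-1)(a+b)}.
$$

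\emph{Main obstacle.} The genuine substance is already packaged in the tools cited: Lemma~\ref{L4.2gen} (the $m$-term approximation input feeding the general scheme of Section~4) for the upper bound, and Theorem~\ref{T2.4KaTe} (that the volumes of hyperbolic-cross $L_p$ balls are essentially independent of $p$ in the regime $1\le p<\infty$) for the lower bound. The only step requiring care is the bookkeeping in Theorem~\ref{T2.1G}: one must verify that taking $b\mapsto b(d-1)$ to match the definition of $\bar\bW^{a,b}_q$, together with $c=d-1$ and $\alpha=0$, produces the correct logarithmic exponent $(d-1)(a+b)$, and that the condition $a>\beta=1/q-1/p$ is exactly what is needed to sum the two series (over $l<n$ and $l\ge n$) in the proof of Theorem~\ref{T2.1G} to the dominant term $2^{-an}n^{b(d-1)}$.
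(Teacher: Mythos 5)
Your proof is correct, and the upper bound argument is exactly the paper's: Theorem \ref{T2.1G} applied to $\bar\bW^{a,b}_q=\bar\bW^{a,b(d-1)}_{L_q}\{\Tr(Q_n)\}$ with Lemma \ref{L4.2gen} supplying hypothesis (\ref{EA}) for $\al=0$, $\bt=\ga=1/q-1/p$ and $c=d-1$, the condition $a>\bt$ matching the theorem's hypothesis. For the lower bound the paper simply quotes Theorem \ref{T1.7} (the $L_q$-in-$L_q$ case, itself proved by the volume method) together with the monotonicity $\|\cdot\|_q\le\|\cdot\|_p$ for $q<p$, whereas you rerun the volume comparison (\ref{2.11}) directly with $B_{\Delta Q_n}(L_q)$ against $B_{\Delta Q_n}(L_p)$ via Theorem \ref{T2.4KaTe}; both are valid instances of the same volume technique, so this is only a cosmetic difference.
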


The lower bound in this theorem follows from the lower bound in Theorem   \ref{T1.7} above.

\section{Hyperbolic cross polynomials, $d=2$, $q=1$ and $q=\infty$}

The construction of the orthonormal basis from the previous section uses classical
methods and classical building blocks -- an analog of the Dirichlet kernels. That construction works very well for $L_q$ spaces with $q\in(1,\infty)$. However, because of use of the Dirichlet kernels it does not work well in the case $q=1$ or $q=\infty$. 
We present here other construction, which is based on the wavelet theory. This construction is taken from \cite{OO}. Let $\de$ be a fixed number, $0<\de\le1/3$, and let $\hat \ff(\la) = \hat \ff_\de(\la)$, $\la\in \R$, be a sufficiently smooth function (for simplicity, real-valued and even) equal $1$ for $|\la| \le (1-\de)/2$, equal to $0$ for $|\la|>(1+\de)/2$ and such that the integral translates of its square constitute a partition of unity:
\be\label{O1}
\sum_{k\in\Z}(\hat\ff(\la+k))^2 =1,\qquad \la\in\R.
\ee
It is known that condition (\ref{O1}) is equivalent to the following property: The set of 
functions $\Phi:=\{\ff(\cdot+l)\}_{l\in\Z}$, where
$$
\ff(x) = \int_\R\hat\ff(\la)e^{2\pi i\la x}d\la,
$$
is an orthonormal system on $\R$:
\be\label{O1'}
\int_\R\ff(x+k)\ff(x+l)dx =\de_{k,l},\qquad k,l\in\Z.
\ee
Following \cite{OO} define
$$
\theta(\la) := \left(((\hat\ff(\la/2))^2-(\hat\ff(\la))^2\right)^{1/2}
$$
and consider, for $n=0,1,\dots$, the trigonometric polynomials
\be\label{O3}
\Psi_n(x):=2^{-n/2}\sum_{k\in\Z}\theta(k2^{-n})e^{2\pi ikx}.
\ee
Introduce also the following dyadic translates of $\Psi_n$:
$$
\Psi_{n,j}(x):= \Psi_n(x-(j+1/2)2^{-n}),
$$
and define the sequence of polynomials $\{T_k\}_{k=0}^\infty$
\be\label{O4}
T_0(x):=1,\qquad T_k(x):= \Psi_{n,j}(x)  
\ee
if $k=2^n+j$, $n=0,1,\dots$, $0\le j<2^n$. Note that $T_k$ is the trigonometric polynomial such that
\be\label{O4'}
\hat T_k(\nu) =0\quad\text{if} \quad |\nu|\ge 2^n(1+\de)\quad \text{or}\quad 
|\nu|\le 2^{n-1}(1-\de).
\ee

It is proved in \cite{OO} that the system $\{T_k\}_{k=0}^\infty$ is a complete orthonormal basis in all $L_p$, $1\le p\le \infty$ (here, $L_\infty$ stands for the space of continuous functions) of $1$-periodic functions. Also, it is proved in \cite{OO} that
\be\label{O12}
|\Psi_n(x)| \le C(\kappa,\de)2^{n/2}(2^n|\sin \pi x|+1)^{-\kappa}
\ee
with $\kappa$ determined by the smoothness of $\hat\ff(\la)$. In particular, we can always make $\kappa >1$ assuming that $\hat\ff(\la)$ is smooth enough. 
It is more convenient for us to consider $2\pi$-periodic functions. We define $\V:=\{v_k\}_{k=0}^\infty$ with 
$v_k(x):=T_k(x/(2\pi))$ for $x\in [0,2\pi)$. 

In the multivariate case of $\bx=(x_1,\dots,x_d)$ we define the system $\V^d$
as the tensor product of the univariate systems $\V$. Namely, $\V^d :=\{v_\bk(\bx)\}_{\bk\in\Z_+^d}$, where
$$
v_\bk(\bx):= \prod_{i=1}^d v_{k_i}(x_i), \quad \bk=(k_1,\dots,k_d).
$$
 
 Property (\ref{O12}) implies the following simple lemma.
 \begin{Lemma}\label{LO1} We have
 $$
 \|\sum_{\bk\in\rho^+(\bs)} a_\bk v_\bk\|_\infty \le C(d,\kappa,\de)2^{\|\bs\|_1/2} \max_\bk|a_\bk|.
 $$
 \end{Lemma}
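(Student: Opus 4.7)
The plan is to reduce to a pointwise bound on $\sum_{\bk\in\rho^+(\bs)}|v_\bk(\bx)|$ via the trivial inequality
$$
\Bigl\|\sum_{\bk\in\rho^+(\bs)} a_\bk v_\bk\Bigr\|_\infty \le \Bigl(\max_\bk |a_\bk|\Bigr)\sup_\bx \sum_{\bk\in\rho^+(\bs)}|v_\bk(\bx)|,
$$
so the task becomes showing that the sum on the right is $O(2^{\|\bs\|_1/2})$ uniformly in $\bx$. Since $v_\bk(\bx)=\prod_{i=1}^d v_{k_i}(x_i)$ by construction, this sum factorizes as $\prod_{i=1}^d\bigl(\sum_{k_i\in\rho^+(s_i)}|v_{k_i}(x_i)|\bigr)$, so it suffices to prove the univariate bound $\sum_{k\in\rho^+(s)}|v_k(x)|\le C(\kappa,\de)\,2^{s/2}$ and then multiply the $d$ factors.

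For $s=0$ the sum is just $|v_0|=1$. For $s\ge 1$, writing $n=s-1$ and $k=2^n+j$ with $0\le j<2^n$, each $v_k$ is the $2\pi$-periodized $\Psi_{n,j}(y)=\Psi_n(y-(j+1/2)2^{-n})$. Thus, with $y=x/(2\pi)$,
$$
\sum_{k\in\rho^+(s)}|v_k(x)|=\sum_{j=0}^{2^n-1}|\Psi_n(y-(j+1/2)2^{-n})|.
$$
The key observation is that the shifts $y-(j+1/2)2^{-n}\pmod 1$ form $2^n$ equally spaced points on the circle with spacing $2^{-n}$. Apply (\ref{O12}) with $\kappa>1$ to each term, using the elementary lower bound $|\sin\pi z|\gg\mathrm{dist}(z,\Z)$: ordering the $2^n$ shifts by their distance to $\Z$, the $j$-th nearest shift lies at distance $\gtrsim j\cdot 2^{-n}$. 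Consequently
$$
\sum_{j=0}^{2^n-1}\bigl(2^n|\sin\pi z_j|+1\bigr)^{-\kappa}\ll \sum_{j=0}^\infty (j+1)^{-\kappa}<\infty,
$$
and the uniform bound (\ref{O12}) then yields $\sum_{j}|\Psi_{n,j}(y)|\ll 2^{n/2}$, i.e.\ $\sum_{k\in\rho^+(s)}|v_k(x)|\ll 2^{s/2}$.

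Combining the factors gives $\sum_{\bk\in\rho^+(\bs)}|v_\bk(\bx)|\le C(d,\kappa,\de)\,2^{\|\bs\|_1/2}$, which when multiplied by $\max_\bk|a_\bk|$ delivers the stated estimate. The only nontrivial step is the univariate sum, whose control rests entirely on the rapid-decay estimate (\ref{O12}); this is precisely why the construction insists on choosing $\hat\ff$ smooth enough that $\kappa>1$, and why no $\log$ factor appears in the bound.
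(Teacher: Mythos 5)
Your proof is correct and is exactly the argument the paper has in mind: the paper simply asserts that the decay estimate (\ref{O12}) ``implies the following simple lemma'' without writing out the details, and your reduction to the univariate sum via tensorization, followed by summing $(2^n|\sin\pi z_j|+1)^{-\kappa}$ over the $2^n$ equally spaced shifts using $\kappa>1$, is the standard way to fill that in. No issues.
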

 
 We use the notation
 $$
 f_\bk:= \<f,v_\bk\> = (2\pi)^{-d} \int_{\T^d} f(\bx)v_\bk(\bx)d\bx.
 $$
 Denote
 $$
 Q_n^+ := \{ \bk =(k_1,\dots,k_d)\in Q_n : k_i\ge 0, i=1,2,\dots\},\qquad \theta_n:=\{\bs: \|\bs\|_1=n\},
 $$
 $$
 \V(Q_n) := \{f: f=\sum_{\bk\in Q_n^+} c_\bk v_\bk\},\quad \V(Q_n)_A := \{f\in\V(Q_n): \sum_{\bk\in Q_n^+} |f_\bk| \le 1\}. 
 $$
 We prove three inequalities for $f\in \V(Q_n)$ in the case $d=2$. Theorem \ref{T1v} is a generalized version of the Small Ball Inequality for the system $\V^2$.
 
 \begin{Theorem}\label{T1v} Let $d=2$. For any $f\in \V(Q_n)$ we have for $l\le n$
 $$
 \sum_{\bs\in\theta_l} \|\sum_{\bk\in \rho^+(\bs)} f_\bk v_\bk \|_1 \le C(6+n-l)\|f\|_\infty,
 $$
 where the constant $C$ may depend on the choice of $\hat\ff$.
 \end{Theorem}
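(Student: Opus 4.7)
The plan is to mimic the $d=2$ proof of the Small Ball Inequality via a Riesz-product / duality argument adapted to the wavelet-type system $\V^2$.

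First, I would set up duality. For each $\bs \in \theta_l$, choose $\sigma_\bs := \sign f_\bs$, which satisfies $|\sigma_\bs| \le 1$ and $\int f_\bs\, \sigma_\bs\, d\bx = \|f_\bs\|_1$. Let $t_\bs$ denote the $L_2$-orthogonal projection of $\sigma_\bs$ onto $\V(\rho^+(\bs))$. The decay bound (\ref{O12}) implies that this one-block projection is bounded on $L_\infty$ uniformly in $\bs$, so $\|t_\bs\|_\infty \le C_0$. Moreover, since $t_\bs \in \V(\rho^+(\bs))$ and different blocks $\rho^+(\bs)$, $\rho^+(\bs')$ are Fourier-disjoint, one has $\int f\, t_\bs\, d\bx = \int f_\bs\, t_\bs\, d\bx = \int f_\bs\, \sigma_\bs\, d\bx = \|f_\bs\|_1$.

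Next, build a Riesz product. Fix $\eta \in (0, 1/(2C_0)]$ and set
$$R(\bx) := \prod_{\bs \in \theta_l}\bigl(1 + \eta\, t_\bs(\bx)\bigr).$$
Each factor is positive, so $R \ge 0$; and since $l \ge 1$ implies every $t_\bs$ has zero mean, $\int R = 1$, i.e.\ $\|R\|_1 = 1$. Expand $R = \sum_{S \subset \theta_l} \eta^{|S|} \prod_{\bs \in S} t_\bs$. The key geometric ingredient is a Fourier-support estimate. Parameterize $\theta_l$ by $s_1 \in \{0,\ldots,l\}$ (so $s_2 = l - s_1$). For any $S = \{\bs^{(1)},\ldots,\bs^{(M)}\}$ with distinct $s_1$-coordinates, property (\ref{O4'}) yields that $\prod_{\bs \in S} t_\bs$ has Fourier support in which the first coordinate of a frequency is at dyadic level $\ge \max_j s_1^{(j)}$ and the second at dyadic level $\ge l - \min_j s_1^{(j)}$. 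Hence every frequency $\bm$ in the support satisfies $\|\bs(\bm)\|_1 \ge l + (\max s_1 - \min s_1) \ge l + M - 1$. When $M \ge n - l + 2$ this forces $\bm \notin Q_n^+$, so $\int f \prod_{\bs \in S} t_\bs\, d\bx = 0$ for every $f \in \V(Q_n)$.

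Combining these facts with $|\int f R\, d\bx| \le \|f\|_\infty \|R\|_1 = \|f\|_\infty$ and isolating the constant and linear terms yields
$$\eta \sum_{\bs \in \theta_l} \|f_\bs\|_1 \le 2\|f\|_\infty + \sum_{M=2}^{n-l+1} \eta^M \sum_{|S|=M}\bigl|{\textstyle\int} f\prod_{\bs \in S} t_\bs\, d\bx\bigr|.$$
Choosing $\eta$ to be a small absolute constant and bounding the error term sharply should produce the stated $\sum_\bs \|f_\bs\|_1 \le C(n-l+6)\|f\|_\infty$. The main obstacle is precisely this last step: the naive bound $\|\prod_{\bs \in S} t_\bs\|_1 \le C_0^M$ combined with $\binom{l+1}{M}$ subsets of each size gives only a $O(l+1)$ estimate, strictly weaker than $O(n-l+6)$ whenever $l > n/2$. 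One must exploit that the Fourier support of $\prod_{\bs \in S} t_\bs$ is concentrated in a thin band of hyperbolic layers $l+M-1,\ldots,n$ (only $n-l-M+2$ layers), yielding a sharper pairing estimate against $f \in \V(Q_n)$, and then sum in $M$ so that this geometric constraint produces the linear $n-l$ dependence. This is the delicate point and is the reason the method is restricted to $d=2$, as flagged in the introduction.
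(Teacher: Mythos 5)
Your setup (dualizing each block with the sign pattern, projecting onto $\Tr(\rho'(\bs))$, forming a Riesz product, and using positivity to get $\|R\|_1=1$) is the right framework and matches the paper's. But the proof does not close, for two reasons, one of which you flag yourself and one of which you do not. First, the Fourier-support claim for the full product over $\theta_l$ is not correct: if $S$ contains blocks with consecutive $s_1$-coordinates, a frequency of $\prod_{\bs\in S}t_\bs$ in the first variable is a signed sum $k_1=\pm m_1^{(1)}\pm\cdots\pm m_1^{(M)}$ with $|m_1^{(j)}|<(1+\de)2^{s_1^{(j)}-1}$, and for gap~$1$ the tail $\sum_{j\ge 2}(1+\de)2^{s_1^{(j)}-1}$ exceeds the leading lower bound $(1-\de)2^{\max_j s_1^{(j)}-2}$, so $k_1$ can be small or even change sign. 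Hence neither your vanishing claim for $M\ge n-l+2$ nor the ``thin band'' localization of the middle terms is justified for the full product. Second, even granting the support estimate, you correctly observe that the terms with $2\le M\le n-l+1$ survive and that you have no bound for them better than $O(l)$; this is a genuine gap, not a technicality.

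The missing idea is to not take the product over all of $\theta_l$ at once. The paper splits $\theta_l$ into the $a:=6+n-l$ arithmetic progressions $H_l(a,b)$, $0\le b<a$, in which consecutive $s_1$-coordinates differ by at least $a\ge 6$. This separation does two things simultaneously (Lemma \ref{L2.1v}): the geometric tail $\sum_{i\ge1}(1+\de)2^{s_1^1-1-ai}$ is now dominated by $(1-\de)2^{s_1^1-2}$, so the leading frequency survives and $k_1k_2>2^{s_1^1+s_2^m-6}\ge 2^{l+a-6}=2^n$ for \emph{every} product of $M\ge 2$ distinct factors; consequently the entire remainder $g$ lies in $\Tr(Q_n)^\perp$ and pairs to zero with $f$. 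Each progression then yields $\sum_{\bs\in H_l(a,b)}\|\de_\bs(f,\V)\|_1\le 2B\|f\|_\infty$ with no error term to estimate, and the factor $(6+n-l)$ in the statement arises simply from summing over the $a$ progressions (plus the $O(n-l)$ boundary blocks with $\min(s_1,s_2)<a$, each handled trivially by $\|\de_\bs(f,\V)\|_1\le B\|f\|_\infty$). So the linear dependence on $n-l$ comes from the covering by progressions, not from a delicate cancellation estimate on the higher-order terms; without this device the argument as you outline it cannot be completed.
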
 
 
   \begin{Theorem}\label{T1lv} Let $d=2$. For any $f\in \V(Q_n)$ we have
 $$
 \sum_{\bk\in Q_n^+} |f_\bk|  \le C2^{n/2}\|f\|_\infty,
 $$
 where the constant $C$ may depend on the choice of $\hat\ff$.
 \end{Theorem}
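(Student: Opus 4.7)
The plan is to reduce the coefficient $\ell^1$ sum to the $L_1$ norms of the hyperbolic block polynomials $g_\bs := \sum_{\bk \in \rho^+(\bs)} f_\bk v_\bk$, since those quantities are exactly what Theorem \ref{T1v} controls in terms of $\|f\|_\infty$. I would begin by organizing the left-hand side by hyperbolic layers:
$$
\sum_{\bk \in Q_n^+} |f_\bk| = \sum_{l=0}^n \sum_{\bs \in \theta_l} \sum_{\bk \in \rho^+(\bs)} |f_\bk|.
$$

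The main ingredient is a per-block Nikol'skii-type bound
$$
\sum_{\bk \in \rho^+(\bs)} |f_\bk| \le C\, 2^{\|\bs\|_1/2}\, \|g_\bs\|_1,
$$
which I would prove by duality inside the finite-dimensional span of $\{v_\bk\}_{\bk\in\rho^+(\bs)}$. Using orthonormality of $\V^d$, for any scalars with $|\epsilon_\bk|\le 1$ we have $\sum_\bk \epsilon_\bk f_\bk = \<g_\bs,\sum_\bk \bar\epsilon_\bk v_\bk\>$. Taking the supremum over $\epsilon$ recovers $\sum_\bk|f_\bk|$ on the left, while H\"older bounds the right by $\|g_\bs\|_1\,\|\sum_\bk\bar\epsilon_\bk v_\bk\|_\infty$, and Lemma \ref{LO1} bounds the $L_\infty$ factor by $C\,2^{l/2}$ with $l=\|\bs\|_1$.

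With the per-block bound in hand, the rest is bookkeeping. Summing over $\bs\in\theta_l$ and applying Theorem \ref{T1v} gives
$$
\sum_{\bs \in \theta_l} \sum_{\bk \in \rho^+(\bs)} |f_\bk| \le C\, 2^{l/2} \sum_{\bs\in\theta_l} \|g_\bs\|_1 \le C'\, 2^{l/2}(n-l+6) \|f\|_\infty .
$$
Setting $j=n-l$ in the outer sum,
$$
\sum_{l=0}^n 2^{l/2}(n-l+6) = 2^{n/2} \sum_{j=0}^n 2^{-j/2}(j+6) \le K\cdot 2^{n/2},
$$
since $\sum_{j\ge 0} 2^{-j/2}(j+6)$ converges; combined with the layer decomposition this yields the claimed bound $\sum_{\bk \in Q_n^+}|f_\bk| \le C\, 2^{n/2}\|f\|_\infty$.

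The only non-routine step is the per-block duality inequality; Theorem \ref{T1v} then enters as a black box and the final summation is a convergent geometric-type series. What makes the rate $2^{n/2}$ sharp (with no extra logarithmic loss) is that the dominant contribution comes from $l=n$, where Theorem \ref{T1v} reduces to a bound with just an absolute constant---any layerwise loss beyond the linear factor $n-l+6$ would propagate through the geometric weighting $2^{l/2}$ and spoil the estimate.
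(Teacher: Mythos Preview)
Your proof is correct and follows essentially the same approach as the paper: both use the duality identity $\sum_{\bk\in\rho^+(\bs)}|f_\bk| = \langle g_\bs, \sum_\bk (\operatorname{sign} f_\bk)v_\bk\rangle$ together with Lemma~\ref{LO1} to obtain the per-block bound $\sum_{\bk\in\rho^+(\bs)}|f_\bk|\le C\,2^{\|\bs\|_1/2}\|g_\bs\|_1$, then apply Theorem~\ref{T1v} layer by layer and sum the resulting geometric series $\sum_{l\le n}(6+n-l)2^{l/2}\ll 2^{n/2}$. Your write-up is slightly more explicit about the duality step and the convergence of the final sum, but the argument is the same.
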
 
 
  \begin{Theorem}\label{T2v} Let $d=2$. For any $f\in \V(Q_n)$ we have
 $$
 \sum_{\bk\in Q_n^+} |f_\bk|  \le C|Q_n|^{1/2}\|f\|_1,
 $$
 where the constant $C$ may depend on the choice of $\hat\ff$.
 \end{Theorem}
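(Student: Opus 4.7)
The natural approach is duality. For $f \in V(Q_n)$,
\[
\sum_{\bk \in Q_n^+} |f_\bk| = \sup_{|\ep_\bk| \le 1}\, \mathrm{Re}\,\langle f, g_\ep \rangle,
\qquad g_\ep := \sum_{\bk \in Q_n^+} \bar\ep_\bk\, v_\bk \in V(Q_n),
\]
so the theorem is equivalent to showing that the functional $f \mapsto \langle f, g_\ep\rangle$ on $V(Q_n)$ has operator norm at most $C|Q_n|^{1/2}$ with respect to $\|\cdot\|_1$, uniformly in $\ep$. The key observation is that since $f \in V(Q_n)$, we may add to $g_\ep$ any $h \in L_\infty(\T^2)$ that is $L_2$-orthogonal to $V(Q_n)$ without changing the pairing, which gives
\[
|\langle f, g_\ep\rangle| \le \|f\|_1 \inf_{h \perp V(Q_n)} \|g_\ep + h\|_\infty.
\]
Thus it suffices to construct, for every $(\ep_\bk)$ with $|\ep_\bk| \le 1$, a function $\tilde g_\ep = g_\ep + h \in L_\infty(\T^2)$ with prescribed wavelet coefficients $\langle \tilde g_\ep, v_\bk\rangle = \bar\ep_\bk$ for all $\bk \in Q_n^+$ and $\|\tilde g_\ep\|_\infty \le C|Q_n|^{1/2}$.

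As a preliminary, I would record the trivial $L_2$ estimate: by orthonormality of $\{v_\bk\}$,
\[
\|g_\ep\|_2 = \Bigl(\sum_{\bk \in Q_n^+} |\ep_\bk|^2\Bigr)^{1/2} \le |Q_n^+|^{1/2} \asymp |Q_n|^{1/2},
\]
which already has the right order in $L_2$ but not in $L_\infty$. The problem is therefore to upgrade this $L_2$-control to $L_\infty$-control on some $V(Q_n)^\perp$-perturbation of $g_\ep$, losing only a bounded factor. This is carried out via a Riesz-product-type construction on the hyperbolic cross in dimension two, in the same spirit as the volume estimate in Theorem~\ref{T2.5KaTe} and the Gluskin-based construction underlying Lemma~\ref{L2.1KaTe}. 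Once the bounded extension $\tilde g_\ep$ is produced, H\"older's inequality yields $|\langle f, g_\ep\rangle| = |\langle f, \tilde g_\ep\rangle| \le \|f\|_1 \|\tilde g_\ep\|_\infty \le C|Q_n|^{1/2} \|f\|_1$, and taking the supremum in $\ep$ gives the theorem.

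The hard part will be the construction of this $L_\infty$-bounded extension. The naive triangle inequality applied scale-by-scale, using Lemma~\ref{LO1} to bound $\|\sum_{\bk \in \rho^+(\bs)} \ep_\bk v_\bk\|_\infty \le C 2^{\|\bs\|_1/2}$, gives only
\[
\|g_\ep\|_\infty \le C\sum_{l=0}^n (l+1)\, 2^{l/2} \asymp n\, 2^{n/2},
\]
which exceeds the target $|Q_n|^{1/2} \asymp \sqrt{n}\, 2^{n/2}$ by a factor of $\sqrt{n}$. The missing $\sqrt{n}$ must come from cancellation across the $\asymp (l+1)$ dyadic sub-rectangles at each level $\|\bs\|_1 = l$; producing this cancellation is exactly the content of the Riesz-product machinery, and is the technical crux of the argument. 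This is also the reason Theorem~\ref{T2v} is restricted to $d=2$: the analogous Riesz-product construction on the hyperbolic cross is not known for $d>2$, the same obstacle emphasized by the author in the Introduction.
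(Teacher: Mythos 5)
Your overall strategy is exactly the paper's: write $\sum_{\bk}|f_\bk|$ as a pairing $\langle f, g\rangle$ with a polynomial $g$ built from the signs of the $f_\bk$, exploit that $f\in\V(Q_n)\subset\Tr(Q_n)$ to replace $g$ by any representative of $g+\Tr(Q_n)^\perp$, and bound the resulting $L_\infty$ distance by a Riesz product on the hyperbolic cross. Your accounting of the loss in the naive estimate (a factor $\sqrt n$ over the target $|Q_n|^{1/2}\asymp n^{1/2}2^{n/2}$) and of the reason for the restriction to $d=2$ is also correct.

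However, there is a genuine gap: the entire mathematical content of the theorem is the construction you defer. You reduce the statement to ``construct $\tilde g_\ep=g_\ep+h$, $h\perp\V(Q_n)$, with $\|\tilde g_\ep\|_\infty\le C|Q_n|^{1/2}$,'' assert that this ``is carried out via a Riesz-product-type construction,'' and stop. That construction is not routine and is where all the work lies. Concretely, the paper proceeds layer by layer rather than with one global extension: for each $l\le n$ it takes $t^1_\bs=\sum_{\bk\in\rho^+(\bs)}(\sign f_\bk)v_\bk$ for $\bs\in\theta_l$, normalizes by $M_l\ll 2^{l/2}$ (Lemma \ref{LO1}), splits $\theta_l$ into the $a$ arithmetic progressions $H_l(a,b)$, $0\le b<a$, with $a=6+n-l$, and for each $b$ forms $\mathrm{Im}\prod_{\bs\in H_l(a,b)}(1+it_\bs(1+l/a)^{-1/2})$. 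Two separate facts must then be verified: (i) this product is uniformly bounded (Remark \ref{R2.1v}); and (ii) after expansion it equals $(1+l/a)^{-1/2}\sum_{\bs\in H_l(a,b)}t_\bs$ plus a remainder lying in $\Tr(Q_{l+a-6})^\perp=\Tr(Q_n)^\perp$ --- this is Lemma \ref{L2.1v}, a frequency-separation argument that depends on the lacunary spacing of the progression $H_l(a,b)$ and on property (\ref{O4'}) of the $v_\bk$, and it is precisely the step that fails for $d>2$. Summing over $b$ costs the factor $a$, giving $E^\perp_{Q_n}(\sum_{\bs\in\theta_l}t_\bs)_\infty\ll a(1+l/a)^{1/2}$, and only then does summation over $l$ produce $\ll 2^{n/2}n^{1/2}\asymp|Q_n|^{1/2}$. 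Without (i) and (ii) your argument does not yet prove anything beyond the trivial bound $n2^{n/2}$ that you yourself computed. (A minor additional point: Lemma \ref{L2.1KaTe} and Gluskin's theorem are not relevant here --- they give volume lower bounds, not $L_\infty$-bounded dual extensions.)
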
 

Proofs of Theorems \ref{T1v} and \ref{T2v} are based on the Riesz products for the hyperbolic cross polynomials (see \cite{Tem3}, \cite{Tmon}, \cite{TE3}, \cite{TE4}). Relation (\ref{O4'}) implies that for $s_1$ and $s_2$ greater than $3$ the function $v_\bk$ with $\bk\in \rho^+(\bs)$ may have nonzero Fourier coefficients $\hat v_\bk(\bm)$ only for 
$$
\bm \in \rho'(\bs) := \{\bm=(m_1,m_2): (1-\de)2^{s_i-2}<|m_i|<(1+\de)2^{s_i-1}, i=1,2\}.
$$
In other words
$$
v_\bk\in \Tr(\rho'(\bs)),\qquad \bk \in \rho^+(\bs).
$$
We introduce some more notations. For any two integers $a\ge 1$ and $0\le b<a$, we shall denote by $AP(a,b)$ the arithmetical progression of the form $al+b$, $l=0,1,\dots$. Set
$$
H_n(a,b) := \{\bs=(s_1,s_2): \bs\in \Z_+^2,\quad \|\bs\|_1=n, \quad s_1,s_2\ge a,\quad s_1\in AP(a,b)\}.
$$
For a subspace $Y$ in $L_2(\T^d)$ we denote by $Y^\perp$ its orthogonal complement. 
We need the following lemma on the Riesz product, which is a variant of Lemma 2.1 from \cite{TE3}. 

\begin{Lemma}\label{L2.1v} Take any trigonometric polynomials $t_\bs\in \Tr(\rho'(\bs))$ and form the function
$$
\Phi(\bx) := \prod_{\bs\in H_n(a,b)}(1+t_\bs).
$$
Then for any $a\ge 6$ and any $0\le b<a$ this function admits the representation
$$
\Phi(\bx) = 1+ \sum_{\bs\in H_n(a,b)} t_\bs(\bx) +g(\bx)
$$
with $g\in \Tr(Q_{n+a-6})^\perp$.
\end{Lemma}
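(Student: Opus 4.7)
The plan is to expand the Riesz product as
\[
\Phi(\bx) \;=\; \sum_{S \subseteq H_n(a,b)} \prod_{\bs \in S} t_\bs(\bx) \;=\; 1 \;+\; \sum_{\bs \in H_n(a,b)} t_\bs(\bx) \;+\; \sum_{|S|\ge 2} \prod_{\bs \in S} t_\bs(\bx),
\]
and to verify that every higher-order product $\prod_{\bs \in S} t_\bs$ with $|S|\ge 2$ lies in $\Tr(Q_{n+a-6})^\perp$. Since $t_\bs \in \Tr(\rho'(\bs))$, any nonzero Fourier coefficient of such a product at frequency $\bm$ forces a decomposition $\bm = \bm_1 + \cdots + \bm_k$ with $\bm_j \in \rho'(\bs_j)$ for pairwise distinct $\bs_1,\dots,\bs_k \in H_n(a,b)$, $k\ge 2$. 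Everything reduces to showing that any such $\bm$ lies outside $Q_{n+a-6}$.

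Next I would exploit the arithmetic progression structure built into $H_n(a,b)$. After relabelling so that $s_1^{(1)}<s_1^{(2)}<\cdots<s_1^{(k)}$, the condition $s_1^{(j)}\in AP(a,b)$ gives $s_1^{(j+1)}-s_1^{(j)}\ge a$, and because $s_1^{(j)}+s_2^{(j)}=n$ the companion sequence satisfies $s_2^{(1)}>\cdots>s_2^{(k)}$ with the same minimal gap. The crux of the argument is that in each coordinate the contribution from the largest-level factor dominates the tails:
\[
|m_1| \;\ge\; (1-\de)2^{s_1^{(k)}-2} \;-\; (1+\de)\sum_{j<k} 2^{s_1^{(j)}-1} \;\ge\; 2^{s_1^{(k)}-2}\Bigl((1-\de) - \tfrac{2(1+\de)}{2^a-1}\Bigr),
\]
where I have used the geometric bound $\sum_{j\ge 1} 2^{-aj}\le 1/(2^a-1)$. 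A symmetric estimate, anchored at the largest $s_2$-index $s_2^{(1)}=n-s_1^{(1)}$, yields the matching lower bound for $|m_2|$.

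Multiplying the two coordinate bounds,
\[
|m_1|\,|m_2| \;\ge\; 2^{n+(s_1^{(k)}-s_1^{(1)})-4}\,c_{a,\de}^2 \;\ge\; 2^{n+a-4}\,c_{a,\de}^2, \qquad c_{a,\de}:=(1-\de) - \tfrac{2(1+\de)}{2^a-1},
\]
since $s_1^{(k)}-s_1^{(1)}\ge a$. With the standing assumption $\de\le 1/3$ and the hypothesis $a\ge 6$, a direct numerical check gives $c_{a,\de}^2\ge 1/4$, hence $|m_1|\,|m_2|\ge 2^{n+a-6}$. Writing $\bm\in\rho(\bs^*)$ and using $|m_i|<2^{s_i^*}$ we conclude $s_1^*+s_2^*>\log_2(|m_1|\,|m_2|)\ge n+a-6$, i.e.\ $\bm\notin Q_{n+a-6}$, which is exactly what was needed.

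The only delicate point is the constant $c_{a,\de}$: the threshold $a\ge 6$ is precisely what makes the geometric tail $2(1+\de)/(2^a-1)$ small enough relative to $(1-\de)$ for a definite fraction of the leading term to survive. The condition $s_1,s_2\ge a$ in the definition of $H_n(a,b)$ is what allows (\ref{O4'}) to be applied to every factor, justifying the inclusion $t_\bs\in\Tr(\rho'(\bs))$ used in the first step. Beyond this numerical verification, the proof is a routine expansion of the Riesz product.
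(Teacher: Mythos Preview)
Your proof is correct and follows essentially the same route as the paper's: expand the Riesz product, and for every cross term $\prod_{j=1}^k t_{\bs_j}$ with $k\ge 2$ use the arithmetic-progression gap $a$ in the first coordinates (and hence in the second) to show that the leading frequency dominates a geometric tail, forcing $|m_1||m_2|\ge 2^{n+a-6}$. The paper orders the $s_1^{(j)}$ decreasingly and states the coordinate bound directly as $k_1>2^{s_1^{(1)}-3}$, but this is precisely your inequality $c_{a,\de}>1/2$ in slightly cruder form. One minor remark: the inclusion $t_\bs\in\Tr(\rho'(\bs))$ is a hypothesis of the lemma, not something to be derived from (\ref{O4'}); the condition $s_1,s_2\ge a$ in $H_n(a,b)$ matters for the later applications, not for this proof.
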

\begin{proof} We prove that for $\bk=(k_1,k_2)$ such that $|k_1k_2|\le 2^{n+a-6}$ we have $\hat g(\bk)=0$. This proof follows the ideas from \cite{Tem3}. Let $w(kt)$ denote either $\cos kt$ or $\sin kt$. Then $g(\bx)$ contains terms of the form
\be\label{2.1v}
h(\bx) = c\prod_{i=1}^m w(k^i_1x_1)w(k^i_2x_2), \quad \bk^i \in \rho'(\bs^i), 
\ee
with all $\bs^i$, $i=1,\dots,m$, $m\ge 2$, distinct. For the sake of simplicity of notations we assume that 
$s^1_1>s^2_1>\cdots >s^m_1$. Then for $h(\bx)$ the frequencies with respect to $x_1$ have the form
$$ 
k_1 = k^1_1 \pm k^2_1 \pm \cdots \pm k^m_1.
$$
Therefore, for $\de\le 1/3$ and $a\ge 6$ we obtain
\be\label{2.2v}
k_1 > (1-\de)2^{s^1_1-2} - \sum_{i\ge 1} (1+\de)2^{s^1_1-1-ai} > 2^{s^1_1-3}.
\ee
In the same way for frequencies of the function $h(\bx)$ with respect to $x_2$ we get
$k_2> 2^{s^m_2-3}$. Consequently, 
$$
k_1k_2 > 2^{s^1_1+s^m_2-6}.
$$
In order to complete the proof it remains to observe that for all terms $h(\bx)$ of the function $g(\bx)$ we have $m\ge 2$, which implies $s^1_1+s^m_2 \ge n+a$. The lemma is proved.

\end{proof}

{\bf Proof of Theorem \ref{T1v}.} For a rectangle $R \subset \Z_+^2$ denote
$$
S_R(f,\V):= \sum_{\bk\in R} f_\bk v_\bk,\qquad \de_\bs(f,\V) := S_{\rho^+(\bs)}(f,\V). 
$$
It is proved in \cite{OO} that $\|S_R\|_{L_\infty \to L_\infty} \le B$, where $B$ may depend only on the function $\hat\ff$. Define
$$
t_\bs := S_{\rho^+(\bs)}(\sign \de_\bs(f,\V))B^{-1}.
$$
Then $t_\bs\in \Tr(\rho'(\bs))$ and $\|t_\bs\|_\infty \le 1$. By Lemma \ref{L2.1v} with $n$ replaced by $l$ and $a=6+n-l$, where $n$ is from Theorem \ref{T1v} we obtain 
$$
\Phi(\bx) = 1+ \sum_{\bs\in H_l(6+n-l,b)} t_\bs(\bx) +g(\bx)
$$
with $g\in \Tr(Q_{n})^\perp$. Clearly, $\|\Phi\|_1 =1$. Therefore, on one hand 
\be\label{1v}
\<f,\Phi-1\> \le 2\|f\|_\infty.
\ee
On the other hand 
$$
\<f,\Phi-1\> = \sum_{\bs\in H_l(6+n-l,b)} \<f,t_\bs\> = \sum_{\bs\in H_l(6+n-l,b)} \<\de_\bs(f,\V),t_\bs\>
$$
\be\label{2v}
= \sum_{\bs\in H_l(6+n-l,b)} \<\de_\bs(f,\V),\sign \de_\bs(f,\V)\>B^{-1} = B^{-1}\sum_{\bs\in H_l(6+n-l,b)} \|\de_\bs(f,\V)\|_1.
\ee
Thus, for each $0\le b<6+n-l$ we have
\be\label{3v}
\sum_{\bs\in H_l(6+n-l,b)} \|\de_\bs(f,\V)\|_1 \le 2B\|f\|_\infty
\ee
This easily implies the conclusion of Theorem \ref{T1v}.

{\bf Proof of Theorem \ref{T1lv}.} Theorem \ref{T1lv} is a corollary of Theorem \ref{T1v}. 
Indeed, by Lemma \ref{LO1} we get
$$
\sum_{\bk\in\rho^+(\bs)} |f_\bk| = \<\de_\bs(f,\V), \sum_{\bk\in\rho^+(\bs)} (\sign f_\bk)v_\bk\> \le C\|\de_\bs(f,\V)\|_1 2^{\|\bs\|_1/2}.
$$
Thus, by Theorem \ref{T1v} we get
$$
\sum_{\bk\in Q^+_n} |f_\bk| \le C\sum_{l\le n}(6+n-l)2^{l/2}\|f\|_\infty \ll 2^{n/2}\|f\|_\infty,
$$
which completes the proof of Theorem \ref{T1lv}.

{\bf Proof of Theorem \ref{T2v}.} We begin with some auxiliary results. The following simple remark is from \cite{TE4}.

\begin{Remark}\label{R2.1v} For any real numbers $y_l$ such that $|y_l|\le 1$, $l=1,\dots,N$ we have ($i^2 =-1$)
$$
\left|\prod_{l=1}^N \left(1+\frac{iy_l}{\sqrt{N}}\right)\right| \le C.
$$
\end{Remark}

We now prove two lemmas, which are analogs of Lemmas 2.2 and 2.3 from \cite{TE4}. Denote
$$
E^\perp_{Q_n}(f)_p := \inf_{g\in \Tr(Q_n)^\perp} \|f-g\|_p.
$$
\begin{Lemma}\label{L2.2v} For any function $f$ of the form
$$
f=\sum_{\bs\in H_n(a,b)} t_\bs
$$
with $a\ge 6$, $0\le b<a$, where $t_\bs$, $\bs\in H_n(a,b)$, is a real trigonometric polynomial in $\Tr(\rho'(\bs))$ such that $\|t_\bs\|_\infty\le 1$, we have
$$
E^\perp_{Q_{n+a-6}}(f)_\infty \le C(1+n/a)^{1/2}
$$
with $C$ depending only on $\hat\ff$.
\end{Lemma}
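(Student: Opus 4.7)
I will adapt the Riesz product argument of Lemma \ref{L2.1v}, but with imaginary weights of size $1/\sqrt{N}$, where $N := |H_n(a,b)|$. The point of Remark \ref{R2.1v} is precisely to control the sup-norm of such an imaginary-weight product, so this is the natural device.

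First I set
$$
\Phi(\bx) := \prod_{\bs\in H_n(a,b)}\left(1+\frac{i\,t_\bs(\bx)}{\sqrt{N}}\right).
$$
Applied pointwise with $y_\bs=t_\bs(\bx)$ (note $|y_\bs|\le 1$ by hypothesis), Remark \ref{R2.1v} yields $\|\Phi\|_\infty\le C$. Next I expand the product and argue exactly as in the proof of Lemma \ref{L2.1v}: the frequency bound (\ref{2.2v}) depends only on the spectral supports $\rho'(\bs)$, so multiplication by the scalars $i/\sqrt{N}$ is inert, and every cross term involving two or more factors lands in $\Tr(Q_{n+a-6})^\perp$. Collecting terms,
$$
\Phi(\bx)=1+\frac{i}{\sqrt{N}}\,f(\bx)+g(\bx),\qquad g\in \Tr(Q_{n+a-6})^\perp.
$$

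Now I take imaginary parts. Writing out $g$ as a sum, over subsets $S\subset H_n(a,b)$ with $|S|\ge 2$, of terms $i^{|S|}N^{-|S|/2}\prod_{\bs\in S}t_\bs$, whose real-polynomial factors $\prod_{\bs\in S}t_\bs$ all have spectrum in $\Z^2\setminus Q_{n+a-6}$, I see that $\operatorname{Im} g$ is a real $\R$-linear combination of these factors and therefore still lies in $\Tr(Q_{n+a-6})^\perp$. Consequently
$$
f(\bx)=\sqrt{N}\,\operatorname{Im}\Phi(\bx)-\sqrt{N}\,\operatorname{Im} g(\bx),
$$
where the second summand lies in $\Tr(Q_{n+a-6})^\perp$ while the first has $\|\cdot\|_\infty\le\sqrt{N}\|\Phi\|_\infty\le C\sqrt{N}$. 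By the definition of $E^\perp_{Q_{n+a-6}}(f)_\infty$ this yields $E^\perp_{Q_{n+a-6}}(f)_\infty\le C\sqrt{N}$.

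Finally I count $N$. The constraints $s_1+s_2=n$, $s_1,s_2\ge a$, $s_1\in AP(a,b)$ confine $s_1$ to those residues $b\pmod a$ in the interval $[a,n-a]$, giving $N\le C(1+n/a)$. Hence $E^\perp_{Q_{n+a-6}}(f)_\infty\le C(1+n/a)^{1/2}$, which is the claim. The only place I expect any delicacy is verifying that $\operatorname{Im} g\in\Tr(Q_{n+a-6})^\perp$ — this is where the realness of the $t_\bs$ is used in an essential way — but, as shown above, it follows immediately from the fact that each $i^{|S|}$ is either real or purely imaginary.
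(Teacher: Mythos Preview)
Your proof is correct and follows essentially the same Riesz-product argument as the paper: the paper forms $RP(f):=\operatorname{Im}\prod_{\bs\in H_n(a,b)}\bigl(1+i\,t_\bs(1+n/a)^{-1/2}\bigr)$, bounds its sup-norm by Remark~\ref{R2.1v}, and then reads off the decomposition from Lemma~\ref{L2.1v}. The only cosmetic difference is that you weight by $1/\sqrt{N}$ with $N=|H_n(a,b)|$ and bound $N\le C(1+n/a)$ at the end, whereas the paper inserts $(1+n/a)^{-1/2}$ directly; your explicit check that $\operatorname{Im}\,g\in\Tr(Q_{n+a-6})^\perp$ via the realness of the $t_\bs$ is exactly the point behind the paper's one-line appeal to Lemma~\ref{L2.1v}.
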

\begin{proof} Let us form the function
$$
RP(f) := \text{Im} \prod_{\bs\in H_n(a,b)} \left(1+it_\bs(1+n/a)^{-1/2}\right),
$$
which is an analog of the Riesz product. Then by Remark \ref{R2.1v} we have
\be\label{4v}
\|RP(f)\|_\infty \le C.
\ee
Lemma \ref{L2.1v} provides the representation 
\be\label{5v}
RP(f) = (1+n/a)^{-1/2} \sum_{\bs\in H_n(a,b)} t_\bs + g,\quad g\in \Tr(Q_{n+a-6}).
\ee
Combining (\ref{4v}) and (\ref{5v}) we obtain the statement of Lemma \ref{L2.2v}.
\end{proof}

\begin{Remark}\label{R2.2v} It is clear that in Lemma \ref{L2.2v} we can drop the assumption that the $t_\bs$ are real polynomials.
\end{Remark}

\begin{Lemma}\label{L2.3v} For any function $f$ of the form 
$$
f=\sum_{\bs\in \theta_n} t_\bs,\quad t_\bs\in \Tr(\rho'(\bs)),\quad \|t_\bs\|_\infty\le 1,
$$
we have for any $a\ge 6$
$$
E^\perp_{Q_{n+a-6}}(f)_\infty \le Ca(1+n/a)^{1/2}
$$
\end{Lemma}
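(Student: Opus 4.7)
The plan is to decompose the index set $\theta_n = \{\bs : \|\bs\|_1 = n\}$ into the $a$ arithmetic-progression strata $H_n(a,b)$ for $b = 0, 1, \dots, a-1$ together with a small ``boundary'' set, apply Lemma \ref{L2.2v} (with Remark \ref{R2.2v}) to each stratum, and sum the resulting bounds.

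Concretely, set
$$f_b := \sum_{\bs \in H_n(a,b)} t_\bs, \qquad f_{\partial} := \sum_{\bs \in \theta_n \setminus \bigcup_b H_n(a,b)} t_\bs,$$
so that $f = f_{\partial} + \sum_{b=0}^{a-1} f_b$. By the definition of $H_n(a,b)$, the boundary set consists exactly of those $\bs \in \theta_n$ with $s_1 < a$ or $s_2 < a$, hence has at most $2a$ elements. Since $\|t_\bs\|_\infty \le 1$, the triangle inequality gives $\|f_{\partial}\|_\infty \le 2a$, and therefore $E^\perp_{Q_{n+a-6}}(f_{\partial})_\infty \le 2a$ (as $0 \in \Tr(Q_{n+a-6})^\perp$).

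For each interior block $f_b$, Lemma \ref{L2.2v} (in the non-real version allowed by Remark \ref{R2.2v}) applies directly and yields $E^\perp_{Q_{n+a-6}}(f_b)_\infty \le C(1+n/a)^{1/2}$. Because $E^\perp_{Q_{n+a-6}}(\cdot)_\infty$ is a seminorm, being the $L_\infty$-distance to the linear subspace $\Tr(Q_{n+a-6})^\perp$, the triangle inequality gives
$$E^\perp_{Q_{n+a-6}}(f)_\infty \le \sum_{b=0}^{a-1} E^\perp_{Q_{n+a-6}}(f_b)_\infty + E^\perp_{Q_{n+a-6}}(f_{\partial})_\infty \le a\,C(1+n/a)^{1/2} + 2a,$$
and the right-hand side is bounded by $C'a(1+n/a)^{1/2}$, which is the claim.

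There is no serious obstacle: the Riesz-product work has been done in Lemma \ref{L2.2v}, and the only point to check is that the decomposition of $\theta_n$ really partitions it up to an $O(a)$-sized remainder, which is immediate from the definition of $H_n(a,b)$. The mild care needed is to notice that the factor $a$ in the final bound comes from summing the $a$ arithmetic-progression contributions, matching exactly the factor appearing in the statement.
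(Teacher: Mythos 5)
Your proof is correct and is essentially identical to the paper's: the paper also splits $\theta_n$ into the boundary set $\theta_{n,a}=\{\bs\in\theta_n:\ s_1<a\ \text{or}\ s_2<a\}$ plus the $a$ strata $H_n(a,b)$, bounds the boundary part trivially by $\|t_\bs\|_\infty\le 1$, and applies Lemma \ref{L2.2v} with Remark \ref{R2.2v} to each stratum before summing over $b$. No further comment is needed.
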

\begin{proof} Let us introduce some more notations. Denote
$$
\theta_{n,a} := \{\bs\in\theta_n:\quad\text{either} \quad s_1<a \quad \text{or}\quad s_2<a\}.
$$
Then
$$
f=\sum_{\bs\in\theta_n} t_\bs = \sum_{\bs\in\theta_{n,a}} t_\bs + \sum_{b=0}^{a-1}\sum_{\bs\in H_n(a,b)} t_\bs
$$
and
$$
E^\perp_{Q_{n+a-6}}(f)_\infty \le \sum_{\bs\in\theta_{n,a}} \|t_\bs\|_\infty + \sum_{b=0}^{a-1}E^\perp_{Q_{n+a-6}}\left(\sum_{\bs\in H_n(a,b)} t_\bs\right)_\infty.
$$
Using the assumption $\|t_\bs\|_\infty\le 1$, Lemma \ref{L2.2v}, and Remark \ref{R2.2v} we get from here the required estimate.
\end{proof}

We now proceed to the proof of Theorem \ref{T2v}. For $l\in [0,n]$ consider
$$
t^1_\bs := \sum_{\bk\in\rho^+(\bs)}(\sign f_\bk) v_\bk,\quad M_l:=\max_{\bs\in\theta_l}\|t^1_\bs\|_\infty,\quad t_\bs := t^1_\bs/M_l.
$$
By Lemma \ref{LO1}
$$
M_l \ll 2^{l/2}.
$$
Applying Lemma \ref{L2.3v} with $a=6+n-l$ we get
$$
\sum_{\bs\in\theta_l} \sum_{\bk\in\rho^+(\bs)} |f_\bk| = \<f,\sum_{\bs\in\theta_l} t^1_\bs\>= M_l \<f,\sum_{\bs\in\theta_l} t_\bs\>
$$
$$
\ll 2^{l/2} E^\perp_{Q_n}\left(\sum_{\bs\in\theta_l} t_\bs\right)_\infty \|f\|_1 \ll 2^{l/2} (6+n-l)\left(1+\frac{n}{6+n-l}\right)^{1/2}.
$$
Summing up over $l\le n$, we complete the proof of Theorem \ref{T2v}.

\begin{Lemma}\label{L4.1v} Let $2\le p<\infty$. Let $\V_n^1 := \{v_\bk:\bk \in Q_n\}$. Then
\be\label{4.9v}
\sigma_m(\V(Q_n)_A,\V_n^1)_p \ll |Q_n|^{1/2-1/p}m^{1/p-1}.
\ee
\end{Lemma}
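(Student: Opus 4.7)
The strategy is greedy coefficient thresholding plus a dyadic decomposition by coefficient size; each layer is bounded in $L_p$ by interpolating between the $L_2$ orthonormality of $\V$ and Lemma \ref{LO1}, and the layers are aggregated via block orthogonality for the system $\V^d$.

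First, let $\Lambda_m \subset Q_n^+$ index the $m$ largest values of $|f_\bk|$. Since $\sum_{\bk \in Q_n^+}|f_\bk|\le 1$, the standard rearrangement bound gives $|f_\bk|\le 1/m$ for $\bk\notin\Lambda_m$, and the remainder $g:=\sum_{\bk\in R} f_\bk v_\bk$, with $R := Q_n^+\setminus\Lambda_m$, still satisfies $\sum_{\bk\in R}|f_\bk|\le 1$. Stratify $R$ by coefficient size into $B_j:=\{\bk\in R : 2^{-j-1}<|f_\bk|\le 2^{-j}\}$ for $j\ge j_0:=\lceil\log_2 m\rceil$, observing that $|B_j|\le 2^{j+1}$, and write $g=\sum_{j\ge j_0} g_j$ with $g_j:=\sum_{\bk\in B_j} f_\bk v_\bk$.

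For each dyadic frequency block $\bs$, I would combine $L_2$ orthonormality of $\V$ with Lemma \ref{LO1}:
\[
\|\delta_\bs(g_j,\V)\|_2 \le 2^{-j}|B_j\cap\rho^+(\bs)|^{1/2},\qquad \|\delta_\bs(g_j,\V)\|_\infty \ll 2^{\|\bs\|_1/2}\cdot 2^{-j},
\]
and interpolate via $\|\cdot\|_p\le\|\cdot\|_2^{2/p}\|\cdot\|_\infty^{1-2/p}$ to deduce
\[
\|\delta_\bs(g_j,\V)\|_p \ll 2^{\|\bs\|_1(1/2-1/p)}\cdot 2^{-j}\cdot |B_j\cap\rho^+(\bs)|^{1/p}.
\]
Invoking the analogue for $\V^d$ of the block-sum Littlewood--Paley bound (\ref{tag1.18}), for $p>2$ this gives
\[
\|g_j\|_p^2 \ll \sum_{\|\bs\|_1\le n}\|\delta_\bs(g_j,\V)\|_p^2 \ll 2^{-2j}\sum_\bs 2^{\|\bs\|_1(1-2/p)}|B_j\cap\rho^+(\bs)|^{2/p}.
\]
Applying H\"older in $\bs$ with conjugate exponents $p/(p-2)$ and $p/2$, together with $\sum_{\|\bs\|_1\le n}2^{\|\bs\|_1}\asymp |Q_n|$ and $\sum_\bs|B_j\cap\rho^+(\bs)|\le |B_j|$, yields
\[
\|g_j\|_p \ll 2^{-j}|Q_n|^{1/2-1/p}|B_j|^{1/p} \ll |Q_n|^{1/2-1/p}\cdot 2^{-j(1-1/p)},
\]
while the case $p=2$ is immediate from $\|g\|_2^2 \le (1/m)\sum_R|f_\bk|\le 1/m$.

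Summing the geometric series $\sum_{j\ge j_0} 2^{-j(1-1/p)}\asymp m^{1/p-1}$ then produces the desired estimate $\|g\|_p \ll |Q_n|^{1/2-1/p}m^{1/p-1}$. The only non-routine ingredient is the block-sum inequality for $\V^d$; unlike its trigonometric counterpart it does not follow from (\ref{tag1.20}) verbatim, but can be established either by a Fourier multiplier argument that exploits the frequency localization $v_\bk\in\Tr(\rho'(\bs))$, or by invoking the unconditional basis property of $\V^d$ proved in \cite{OO}.
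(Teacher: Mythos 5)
Your proof is correct, but it follows a genuinely different route from the paper's. The paper first reduces to a single hyperbolic layer $\Delta Q_n$ and there runs a \emph{two-stage} greedy scheme: the first $[m/2]$ terms are the globally largest coefficients, which by Lemma \ref{Lqp} leave an $L_2$-residual of size $(m/2+1)^{-1/2}$; the second $[m/2]$ terms are distributed \emph{evenly} among the blocks $\rho^+(\bs)$, $\|\bs\|_1=n$ (with $m_\bs\asymp m/m_n$ per block), and the residual of each block is controlled in $L_p$ via Lemma \ref{Lqp}, Lemma \ref{LO1} and the interpolation $\|\cdot\|_p\le\|\cdot\|_2^{2/p}\|\cdot\|_\infty^{1-2/p}$, the blocks being reassembled by (\ref{4.32}). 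The full set $Q_n$ is then treated by allotting $m_l\asymp m2^{-\mu(n-l)}$ terms to each layer $\Delta Q_l$. You instead work on $Q_n$ directly, take the $m$ largest coefficients in one shot, stratify the tail by coefficient magnitude, and control each stratum by the same interpolation followed by H\"older across \emph{all} blocks $\|\bs\|_1\le n$ at once; the geometric decay $2^{-j(1-1/p)}$ of the strata replaces the paper's geometric allocation over layers. Your version is shorter and dispenses with the $\Delta Q_l$ bookkeeping; the paper's layered, evenly-distributed selection has the advantage of being the template reused verbatim for Lemmas \ref{L4.1v1}, \ref{L4.1v2} and \ref{L4.3v}. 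Both arguments hinge on exactly the same ingredients ($\ell_1$-control of the coefficients, Lemma \ref{LO1}, orthonormality, and the block inequality (\ref{4.32})); your closing remark about (\ref{4.32}) is consistent with the paper, which derives it from (\ref{tag1.18}) using the frequency localization $v_\bk\in\Tr(\rho'(\bs))$. The only blemish is the starting index of your stratification: with $j_0=\lceil\log_2 m\rceil$ a coefficient of size between $2^{-j_0}$ and $1/m$ falls outside your bands, so you should start at $j=\lfloor\log_2 m\rfloor$; this affects only constants.
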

\begin{proof} Note, that for $f\in\V(Q_n)_A$ we easily obtain that $\|f\|_2\le 1$ and $\|f\|_\infty \ll 2^{n/2}$, which, in turn, implies for $2\le p\le\infty$
\be\label{6.15}
\|f\|_p   \ll 2^{n(1/2-1/p)}.
\ee
Thus, it is sufficient to prove (\ref{4.9v}) for big enough $m$. 

First, we prove the lemma for $\DQn$ instead of $Q_n$. Then $f\in \V(\DQn)_A$ has a representation
$$
f=\sum_{\bk\in \DQn} f_\bk v_\bk, \qquad \sum_{\bk\in \DQn} |f_\bk| \le 1.
$$
Using the fact that the system $\V^d$ is orthonormal we obtain by Lemma \ref{Lqp} 
with $m_1:=[m/2]$ that 
\be\label{4.30}
\sigma_{m_1}(f,\V^d)_2 \le (m_1+1)^{-1/2}.
\ee
For a set $\L$ denote
$$
\V(\L)_q := \{f: f=\sum_{\bk\in \L\cap \Z^d_+}f_\bk v_\bk, \quad \|f\|_q\le 1\}.
$$
Next, we estimate the best $m_1$-term approximation of $g\in \V(\DQn)_2$ in $L_p$, $2<p<\infty$.  We apply Lemma \ref{Lqp} to each set of $g_\bk$, $\bk\in \rho^+(\bs)$, $\|\bs\|_1=n$ with $m_\bs := [m_1/m_n]$, $m_n := [|\DQn|2^{-n}]$, assuming that $n\ge C$ with the absolute constant $C$ large enough to guarantee $m_n\ge 1$. Denote $G_\bs$ the set of cardinality $|G_\bs|=m_\bs$ of $\bk$ from $\rho^+(\bs)$ with largest $|g_\bk|$. Then by Lemmas \ref{Lqp} and \ref{LO1} we obtain
$$
\|\sum_{\bk\in \rho^+(\bs)\setminus G_\bs} g_\bk v_\bk \|_\infty \ll 2^{n/2} (m_\bs+1)^{-1/2}\|\de_\bs(g,\V)\|_2.
$$
Applying simple inequality for $2\le p\le \infty$
$$
\|f\|_p\le \|f\|_2^\al \|f\|_\infty^{1-\al},\qquad \al =2/p
$$
  we obtain
\be\label{4.10v}
\|\sum_{\bk\in \rho(\bs)\setminus G_\bs} g_\bk v_\bk \|_p \ll (|\Delta Q_n|/m_1)^{1/2-1/p} \|\de_\bs(g,\V)\|_2.
\ee
Inequality (\ref{tag1.18}) implies easily a similar inequality for $\V^d$ for $2\le p<\infty$
\be\label{4.32}
\|f\|_p \ll \left(\sum_\bs\|\de_\bs(f,\V)\|_p^2\right)^{1/2}.
\ee
Combining (\ref{4.30}), (\ref{4.10v}), and (\ref{4.32}) we
 complete the proof of Lemma \ref{L4.1v} in the case of $\DQn$.

 We now derive the general case of $Q_n$ from the above considered case of $\DQl$. Set 
 $$
 \mu := \frac{1}{2}\left(\frac{1}{2}-\frac{1}{p}\right)\left(1-\frac{1}{p}\right)^{-1}
 $$
 and denote by $l_0$ the smallest $l$ satisfying 
 \be\label{6.19}
 m2^{-\mu(n-l)} \ge 1.
 \ee
 Then for $f\in \V(Q_n)_A$ we have by (\ref{6.15}) and (\ref{6.19})
 \be\label{6.20}
 f_0:=\sum_{\bk\in Q^+_{l_0}}f_\bk v_\bk, \quad \|f_0\|_p \ll 2^{l_0(1/2-1/p)}\ll 2^{n(1/2-1/p)}m^{2(1/p-1)}.
 \ee 
  For $l>l_0$ define $m_l:=[m2^{-\mu(n-l)}] \ge 1$. Then
  $$
  m':= \sum_{l_0<l\le n} m_l \le C(p) m,
  $$
  and 
\be\label{6.21}
\sigma_{m'}(f-f_0, \V^1_n)_p \ll \sum_{l_0<l\le n} |\DQl|^{1/2-1/p}m_l^{1/p-1} \ll   |Q_n|^{1/2-1/p}m^{1/p-1}.
\ee
Combining (\ref{6.20}) -- (\ref{6.21}) we complete the proof of Lemma \ref{L4.1v}. 
  
\end{proof}

Lemma \ref{L4.1v} and Theorem \ref{T2v} imply.
\begin{Lemma}\label{L4.1v1} Let $2\le p<\infty$. Let $\V_n^1 := \{v_\bk:\bk \in Q_n\}$. Then
$$
\sigma_m(\V(Q_n)_1,\V_n^1)_p \ll (|Q_n|/m)^{1-1/p}.
$$
\end{Lemma}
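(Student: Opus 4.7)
The plan is to deduce this essentially immediately by combining the two listed ingredients via a rescaling argument, so no new estimate needs to be made. The key observation is that Theorem \ref{T2v} is exactly the bridge that converts an $L_1$ normalization into an $\ell_1$ normalization of the coefficient sequence with respect to the basis $\V^d$; once we are in $\V(Q_n)_A$, Lemma \ref{L4.1v} finishes the job.

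First I would take an arbitrary $f\in \V(Q_n)_1$, so $\|f\|_1\le 1$. By Theorem \ref{T2v} applied in dimension $d=2$,
\[
S(f):=\sum_{\bk\in Q_n^+}|f_\bk| \le C|Q_n|^{1/2}\|f\|_1 \le C|Q_n|^{1/2},
\]
so that the rescaled function $\tilde f := f/(C|Q_n|^{1/2})$ satisfies $\sum_\bk |\tilde f_\bk|\le 1$, i.e.\ $\tilde f\in \V(Q_n)_A$. Since $\sigma_m(\cdot,\V_n^1)_p$ is positively homogeneous of degree one, we obtain
\[
\sigma_m(f,\V_n^1)_p = C|Q_n|^{1/2}\sigma_m(\tilde f,\V_n^1)_p.
\]

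Next I would apply Lemma \ref{L4.1v} to $\tilde f$, which gives
\[
\sigma_m(\tilde f,\V_n^1)_p \ll |Q_n|^{1/2-1/p}m^{1/p-1}.
\]
Plugging this back yields
\[
\sigma_m(f,\V_n^1)_p \ll |Q_n|^{1/2}\cdot |Q_n|^{1/2-1/p}m^{1/p-1} = |Q_n|^{1-1/p}m^{1/p-1} = (|Q_n|/m)^{1-1/p},
\]
with constants independent of $f$, which is the claimed bound after taking the supremum over $f\in \V(Q_n)_1$.

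There is essentially no obstacle here: all the real work has been done in Theorem \ref{T2v} (the Riesz-product argument that extracts the coefficient $\ell_1$ bound from the $L_1$ norm in dimension two) and in Lemma \ref{L4.1v} (the two-scale greedy selection in $\V(Q_n)_A$). The only thing to be slightly careful about is that Theorem \ref{T2v} is stated precisely for $d=2$, which is why Lemma \ref{L4.1v1} is restricted to the $d=2$ setting inherited from the section; no additional hypothesis on $m$ is needed because Lemma \ref{L4.1v} already accommodates all $m$.
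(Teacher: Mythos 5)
Your argument is correct and is exactly the paper's intended proof: the paper states Lemma \ref{L4.1v1} as an immediate consequence of Lemma \ref{L4.1v} and Theorem \ref{T2v}, and your rescaling computation $|Q_n|^{1/2}\cdot|Q_n|^{1/2-1/p}m^{1/p-1}=(|Q_n|/m)^{1-1/p}$ is precisely the implication left to the reader. Nothing is missing.
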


Lemma \ref{L4.1v} and Theorem \ref{T1lv} imply.
\begin{Lemma}\label{L4.1v2} Let $2\le p<\infty$. Let $\V_n^1 := \{v_\bk:\bk \in Q_n\}$. Then
$$
\sigma_m(\V(Q_n)_\infty,\V_n^1)_p \ll n^{-1/2}(|Q_n|/m)^{1-1/p}.
$$
\end{Lemma}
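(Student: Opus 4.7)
The plan is to reduce $\V(Q_n)_\infty$ to $\V(Q_n)_A$ by means of Theorem \ref{T1lv}, and then invoke Lemma \ref{L4.1v}. Since best $m$-term approximation is positively homogeneous of degree one, any uniform inclusion $\V(Q_n)_\infty \subset \lambda \cdot \V(Q_n)_A$ converts the $m$-term bound for $\V(Q_n)_A$ into one for $\V(Q_n)_\infty$ with a factor of $\lambda$.

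First, let $f \in \V(Q_n)_\infty$, so that $\|f\|_\infty \le 1$. Theorem \ref{T1lv} (applied in the case $d=2$) gives
\be\label{plan-abs-sum}
\sum_{\bk\in Q_n^+} |f_\bk| \le C\, 2^{n/2}\|f\|_\infty \le C\, 2^{n/2}.
\ee
Thus $g := f/(C\,2^{n/2}) \in \V(Q_n)_A$. Applying Lemma \ref{L4.1v} to $g$ and rescaling yields
$$
\sigma_m(f,\V_n^1)_p \ll 2^{n/2}\, |Q_n|^{1/2-1/p} m^{1/p-1}.
$$

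It remains to verify that this matches the desired right-hand side when $d=2$. Using $|Q_n|\asymp 2^n n$, one has
$$
2^{n/2}\,|Q_n|^{1/2-1/p} \asymp 2^{n/2}\cdot 2^{n(1/2-1/p)} n^{1/2-1/p} = 2^{n(1-1/p)} n^{1/2-1/p},
$$
and on the other hand
$$
n^{-1/2}(|Q_n|/m)^{1-1/p} \asymp n^{-1/2}\cdot 2^{n(1-1/p)} n^{1-1/p} m^{-(1-1/p)} = 2^{n(1-1/p)} n^{1/2-1/p} m^{1/p-1}.
$$
These two quantities agree up to an absolute constant, so
$$
\sigma_m(\V(Q_n)_\infty,\V_n^1)_p \ll n^{-1/2}(|Q_n|/m)^{1-1/p},
$$
as required.

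No step is really an obstacle here: the only nontrivial ingredient is Theorem \ref{T1lv}, which is a consequence of the Riesz product analysis for the $\V^d$-system (and is restricted to $d=2$, exactly as in Lemma \ref{L4.1v2}). The rest is homogeneity and bookkeeping with $|Q_n|\asymp 2^n n$.
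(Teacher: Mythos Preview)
Your proof is correct and follows exactly the route the paper indicates: the paper simply states that ``Lemma \ref{L4.1v} and Theorem \ref{T1lv} imply'' Lemma \ref{L4.1v2}, and you have spelled out precisely this implication, including the arithmetic check that $2^{n/2}\,|Q_n|^{1/2-1/p}m^{1/p-1}\asymp n^{-1/2}(|Q_n|/m)^{1-1/p}$ when $d=2$.
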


We now apply the second step of the strategy described in  Section 4. 
Theorem \ref{T3.1}, Remark \ref{R3.1} and Lemma \ref{L4.1v1} imply the following lemma.

\begin{Lemma}\label{L4.2v} Let $ 2\le p <\infty$ and $\bt:=1-1/p$. Then
$$
\e_k(\V(Q_n)_1,L_p) \ll  \left\{\begin{array}{ll}  (|Q_n|/k)^\bt (\log (4|Q_n|/k))^\bt, &\quad k\le 2|Q_n|,\\
 2^{-k/(2|Q_n|)},&\quad k\ge 2|Q_n|.\end{array} \right.
$$
\end{Lemma}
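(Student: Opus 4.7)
The plan is to combine Lemma \ref{L4.1v1} with the general $m$-term approximation to entropy numbers machine from Section 4, namely Theorem \ref{T3.1} and Remark \ref{R3.1}, applied to the dictionary $\V_n^1 = \{v_\bk:\bk\in Q_n\}$ of cardinality $N\asymp |Q_n|$.

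Set $\beta := 1-1/p$. Lemma \ref{L4.1v1} gives
$$
\sigma_m(\V(Q_n)_1,\V_n^1)_p \le C\,|Q_n|^{\beta}\, m^{-\beta},\qquad m\le N,
$$
which is not quite the normalized bound $m^{-r}$ demanded by Theorem \ref{T3.1}. To fix this, I would rescale the class: put $\tilde F := C^{-1}|Q_n|^{-\beta}\V(Q_n)_1$, so that $\sigma_m(\tilde F,\V_n^1)_p\le m^{-\beta}$ for all $m\le N$. Since entropy numbers are positively homogeneous, $\e_k(\V(Q_n)_1,L_p) = C|Q_n|^{\beta}\,\e_k(\tilde F,L_p)$, and it suffices to bound $\e_k(\tilde F,L_p)$.

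For $k\le 2|Q_n|$ (equivalently $k\le 2N$ up to constants), Theorem \ref{T3.1} with $r=\beta$ yields
$$
\e_k(\tilde F,L_p) \le C(\beta)\left(\frac{\log(2N/k)}{k}\right)^{\beta},
$$
and multiplying by $|Q_n|^{\beta}$ gives the first branch
$$
\e_k(\V(Q_n)_1,L_p) \ll (|Q_n|/k)^{\beta}(\log(4|Q_n|/k))^{\beta}.
$$
For $k\ge 2|Q_n|$ one simply observes $\V(Q_n)_1\subset \sp(\V_n^1)$, which is the hypothesis needed for Remark \ref{R3.1}; that remark then produces
$$
\e_k(\tilde F,L_p) \le C(\beta)\,N^{-\beta}\,2^{-k/(2N)},
$$
and undoing the rescaling absorbs the $|Q_n|^{\beta}$ against $N^{-\beta}\asymp |Q_n|^{-\beta}$, leaving the second branch $\e_k(\V(Q_n)_1,L_p)\ll 2^{-k/(2|Q_n|)}$.

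There is essentially no obstacle at this stage — all the genuine work sits upstream in Lemma \ref{L4.1v1} (and in the Riesz-product Theorems \ref{T1v}--\ref{T2v} that feed into it). The only bookkeeping points are the rescaling by $|Q_n|^{\beta}$ to match the normalization in Theorem \ref{T3.1}, the identification $N=|\V_n^1|\asymp |Q_n|$ so that the logarithmic factor $\log(2N/k)$ becomes $\log(4|Q_n|/k)$, and the inclusion $\V(Q_n)_1\subset\sp(\V_n^1)$ that legitimizes the use of Remark \ref{R3.1} in the large-$k$ regime.
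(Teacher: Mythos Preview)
Your proposal is correct and follows exactly the approach the paper takes: the paper's proof is the single sentence ``Theorem \ref{T3.1}, Remark \ref{R3.1} and Lemma \ref{L4.1v1} imply the following lemma,'' and you have simply spelled out the rescaling and bookkeeping that this sentence suppresses.
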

Theorem \ref{T3.1}, Remark \ref{R3.1} and Lemma \ref{L4.1v2} imply the following lemma.

\begin{Lemma}\label{L4.2vc} Let $ 2\le p <\infty$ and $\bt:=1-1/p$. Then
$$
\e_k(\V(Q_n)_\infty,L_p) \ll  \left\{\begin{array}{ll} n^{-1/2}(|Q_n|/k)^\bt (\log (4|Q_n|/k))^\bt, &\quad k\le 2|Q_n|,\\
n^{-1/2}2^{-k/(2|Q_n|)},&\quad k\ge 2|Q_n|.\end{array} \right.
$$
\end{Lemma}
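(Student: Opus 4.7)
The plan is to view Lemma \ref{L4.2vc} as a direct consequence of Lemma \ref{L4.1v2} combined with the general passage from $m$-term approximation to entropy numbers provided by Theorem \ref{T3.1} and Remark \ref{R3.1}. The only real issue is bookkeeping: Theorem \ref{T3.1} is stated for a class $F$ whose best $m$-term error decays like $m^{-r}$, while Lemma \ref{L4.1v2} delivers a bound of the form $n^{-1/2}(|Q_n|/m)^{\beta}$. So I first renormalize the class and then invoke the black-box theorem.

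First I would set $N := |\V_n^1| \asymp |Q_n|$ and observe that $\V(Q_n)_\infty$ lies in the $N$-dimensional subspace $X_N := \sp(\V_n^1)$, so Remark \ref{R3.1} is applicable. Let $C_0$ be the implicit constant in Lemma \ref{L4.1v2}, and define the rescaled class
\begin{equation*}
\tilde F := \frac{n^{1/2}}{C_0\,|Q_n|^{\beta}}\, \V(Q_n)_\infty.
\end{equation*}
Lemma \ref{L4.1v2} then reads $\sigma_m(\tilde F, \V_n^1)_p \le m^{-\beta}$ for all $m \le N$, which is precisely the hypothesis of Theorem \ref{T3.1} with $r=\beta$ and dictionary $\D = \V_n^1$.

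Next, Theorem \ref{T3.1} gives, for $k \le N$,
\begin{equation*}
\e_k(\tilde F, L_p) \le C(\beta)\left(\frac{\log(2N/k)}{k}\right)^{\beta},
\end{equation*}
and Remark \ref{R3.1} (applicable since $\tilde F \subset X_N$) gives, for $k \ge N$,
\begin{equation*}
\e_k(\tilde F, L_p) \le C(\beta)\, N^{-\beta} 2^{-k/(2N)}.
\end{equation*}
Scaling back multiplies both estimates by $C_0\, n^{-1/2}|Q_n|^{\beta}$. Using $N \asymp |Q_n|$ and absorbing the resulting constants (including adjusting the threshold from $N$ to $2|Q_n|$), the first estimate becomes the claimed bound $n^{-1/2}(|Q_n|/k)^{\beta}(\log(4|Q_n|/k))^{\beta}$ for $k \le 2|Q_n|$, while the second becomes $n^{-1/2} 2^{-k/(2|Q_n|)}$ for $k \ge 2|Q_n|$.

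There is no real obstacle here; the substance was already done in proving Lemma \ref{L4.1v2} (which uses Theorems \ref{T1lv} and \ref{T2v}, i.e.\ the Riesz product machinery) and in the general inequality Theorem \ref{T3.1}. The only care needed is in the $k \ge 2|Q_n|$ regime, where one has to verify that the factor $|Q_n|^{\beta} \cdot N^{-\beta}$ produced by the rescaling is $O(1)$, which follows from $N \asymp |Q_n|$ with constants depending only on $d$.
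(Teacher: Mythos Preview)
Your proposal is correct and follows exactly the route the paper takes: the paper simply states that Lemma \ref{L4.2vc} follows from Theorem \ref{T3.1}, Remark \ref{R3.1}, and Lemma \ref{L4.1v2}, and you have supplied the (routine) rescaling and bookkeeping that make this implication explicit. There is nothing to add.
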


The fact that $\Tr(Q_n) \subset \V(Q_{n+6})$, Corollary \ref{Corollary 32.1} and inequality (\ref{qp<2}) allow us to derive the following results from Lemmas \ref{L4.2v} and \ref{L4.2vc}.

\begin{Theorem}\label{T4.2v} Let $ 1< p <\infty$ and $\bt:=1-1/p$. Then
$$
\e_k(\Tr(Q_n)_1,L_p) \ll  \left\{\begin{array}{ll}  (|Q_n|/k)^\bt (\log (4|Q_n|/k))^\bt, &\quad k\le 2|Q_n|,\\
 2^{-k/(2|Q_n|)},&\quad k\ge 2|Q_n|.\end{array} \right.
$$
\end{Theorem}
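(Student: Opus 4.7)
The plan is to follow the recipe announced just above the theorem: transfer Lemma~\ref{L4.2v} from the wavelet-type system $\V^d$ to the trigonometric system by means of the set-theoretic embedding $\Tr(Q_n)\subset \V(Q_{n+6})$, and then extend the range of $p$ from $[2,\infty)$ down to $(1,\infty)$ by the interpolation-type inequality~(\ref{qp<2}). I would begin by verifying the embedding: by~(\ref{O4'}) the basis function $v_\bk$ with $\bk\in \rho^+(\bs)$ has Fourier support inside the slightly thickened dyadic block $\rho'(\bs)$, so the Fourier coefficient $\langle f,v_\bk\rangle$ of any $f\in \Tr(Q_n)$ can be nonzero only when $\rho'(\bs)$ meets $Q_n$. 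A direct dyadic comparison (using $\delta\le 1/3$) shows that this forces $\|\bs\|_1\le n+6$, the small integer $6$ absorbing both the low-scale boundary $s_j\in\{0,1,2\}$ and the $(1\pm\delta)$ slack in $\rho'(\bs)$. In particular $\Tr(Q_n)_1 \subset \V(Q_{n+6})_1$, and monotonicity of the entropy numbers together with Lemma~\ref{L4.2v} applied at level $n+6$ (absorbing $|Q_{n+6}|\asymp|Q_n|$ into the $\ll$-constant) then produces the claimed bound for $2\le p<\infty$.

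For $1<p<2$ I would invoke~(\ref{qp<2}) with $q=1$ and $v=2$, so that the exponent $\mu$ equals $2\beta$ with $\beta=1-1/p$, giving
\[
\e_k(\Tr(Q_n)_1,L_p)\le 2\bigl(\e_k(\Tr(Q_n)_1,L_2)\bigr)^{2\beta}.
\]
Plugging the $p=2$ bound already established into the right-hand side and raising to the $2\beta$-th power reproduces exactly $(|Q_n|/k)^\beta(\log(4|Q_n|/k))^\beta$ in the principal regime $k\le 2|Q_n|$. In the tail $k\ge 2|Q_n|$ this same inequality yields the exponential rate $2^{-2c\beta k/|Q_n|}$ for some $c>0$, which is of the stated form; if a sharper constant in the exponent matching the stated $2^{-k/(2|Q_n|)}$ is required, one may alternatively combine the $D=2|Q_n|$-dimensional estimate of Corollary~\ref{Corollary 32.1} with the Nikol'skii-type bound $\|t\|_p\ll 2^{\beta n}\|t\|_1$ of Theorem~\ref{NI}, which passes directly from the $L_1$-ball to the $L_p$-ball and so lets Corollary~\ref{Corollary 32.1} be applied to $\Tr(Q_n)_p$.

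The only genuinely new ingredient is the embedding argument of the first step, and even there the work reduces to careful bookkeeping with dyadic scales. All of the analytic heavy lifting -- the Riesz product of Lemma~\ref{L2.1v}, the Small-Ball-type inequality of Theorem~\ref{T2v}, and the resulting $m$-term bound of Lemma~\ref{L4.1v1} -- is already encapsulated in Lemma~\ref{L4.2v}, so I expect no serious obstacle beyond handling the boundary scales $s_j\in\{0,1,2\}$ and tracking absolute constants in the exponential tail.
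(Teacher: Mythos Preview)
Your proposal is correct and follows exactly the route the paper indicates in the sentence preceding the theorem: the embedding $\Tr(Q_n)\subset\V(Q_{n+6})$ transfers Lemma~\ref{L4.2v} to the range $2\le p<\infty$, inequality~(\ref{qp<2}) with $v=2$ extends the principal branch to $1<p<2$, and Corollary~\ref{Corollary 32.1} supplies the exponential tail. One small correction on the tail for $1<p<2$: the Nikol'skii route you sketch leaves an extra factor $2^{\beta n}$, so instead use the first branch at $k_0=2|Q_n|$ to get $\e_{k_0}(\Tr(Q_n)_1,L_p)\ll 1$ and then apply Corollary~\ref{Corollary 32.1} directly in the $L_p$-norm on the $2|Q_n|$-dimensional space $\Tr(Q_n)$ to refine each of the $2^{k_0}$ balls, which produces the stated rate $2^{-k/(2|Q_n|)}$ without any loss.
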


\begin{Theorem}\label{T4.2vc} Let $ 2\le p <\infty$ and $\bt:=1-1/p$. Then
$$
\e_k(\Tr(Q_n)_\infty,L_p) \ll  \left\{\begin{array}{ll} n^{-1/2}(|Q_n|/k)^\bt (\log (4|Q_n|/k))^\bt, &\quad k\le 2|Q_n|,\\
n^{-1/2}2^{-k/(2|Q_n|)},&\quad k\ge 2|Q_n|.\end{array} \right.
$$
\end{Theorem}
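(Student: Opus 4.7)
The plan is to derive Theorem \ref{T4.2vc} from Lemma \ref{L4.2vc} by exactly the reduction announced in the sentence preceding Theorem \ref{T4.2v}: use the inclusion $\Tr(Q_n) \subset \V(Q_{n+6})$ together with the monotonicity of entropy numbers. For the range $2\le p<\infty$ of Theorem \ref{T4.2vc}, Lemma \ref{L4.2vc} is already stated in the correct $L_p$ regime, so no further interpolation (i.e.\ no analogue of the argument using $(\ref{qp<2})$ needed for $p<2$ in Theorem \ref{T4.2v}) is required.

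First I would verify the inclusion $\Tr(Q_n)\subset \V(Q_{n+6})$ carefully. Since $\V^d$ is a complete orthonormal basis of $L_2$, any $f\in\Tr(Q_n)$ admits the expansion $f=\sum_{\bk}f_\bk v_\bk$; the coefficient $f_\bk=\langle f,v_\bk\rangle$ can be nonzero only when the Fourier support of $v_\bk$ meets $Q_n$. By (\ref{O4'}) and the definition of $\rho'(\bs)$, for $\bk\in\rho^+(\bs)$ every frequency $\bm$ appearing in $v_\bk$ satisfies $|m_i|<(1+\de)2^{s_i-1}\le (2/3)\cdot 2^{s_i}$ for $i=1,2$. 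Hence $|m_1m_2|>2^n$ whenever $\|\bs\|_1>n+O(1)$, and the choice $\de\le 1/3$ makes the constant $6$ a safe upper bound. Consequently only $\bk\in Q_{n+6}^+$ contribute, and since the $L_\infty$ norm is preserved under this identification we obtain the inclusion of unit balls
$$
\Tr(Q_n)_\infty \subseteq \V(Q_{n+6})_\infty.
$$

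Second, monotonicity of entropy numbers with respect to the set being covered yields
$$
\e_k(\Tr(Q_n)_\infty,L_p)\le \e_k(\V(Q_{n+6})_\infty,L_p),
$$
and I would then apply Lemma \ref{L4.2vc} with $n$ replaced by $n+6$. Since $|Q_{n+6}|\asymp |Q_n|$ and $(n+6)^{-1/2}\asymp n^{-1/2}$, the two regimes of Lemma \ref{L4.2vc} at level $n+6$ translate directly into the two regimes stated in Theorem \ref{T4.2vc}. For $k$ in the transitional range $2|Q_n|\le k\le 2|Q_{n+6}|$ the two branches of the stated estimate are comparable (up to absolute constants), so the resulting bound is uniform in $k$. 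Corollary \ref{Corollary 32.1} can be invoked as a safety net for the regime $k\ge 2|Q_{n+6}|$, giving the trivial volumetric estimate that matches the exponential branch.

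The only technical point to watch is the size of the shift ($+6$) in $\Tr(Q_n)\subseteq\V(Q_{n+6})$: this is a consequence of the sharp frequency localization of the $v_\bk$ encoded in (\ref{O4'}) together with the calibration $\de\le 1/3$, and is essentially bookkeeping rather than a genuine obstacle. All of the real analytic work — the Riesz-product argument delivering Theorem \ref{T2v} and the $m$-term approximation estimate of Lemma \ref{L4.1v2} — has already been done upstream, and Theorem \ref{T4.2vc} is simply the transfer of Lemma \ref{L4.2vc} from the adapted wavelet-type basis $\V^d$ back to the standard trigonometric polynomial setting.
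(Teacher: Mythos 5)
Your argument is exactly the paper's: Theorem \ref{T4.2vc} is obtained there precisely by the inclusion $\Tr(Q_n)\subset\V(Q_{n+6})$, monotonicity of entropy numbers, Lemma \ref{L4.2vc} at level $n+6$, and Corollary \ref{Corollary 32.1} for the large-$k$ regime (the interpolation inequality (\ref{qp<2}) being needed only for the companion Theorem \ref{T4.2v} when $p<2$). The one point to make explicit is that for $k\ge 2|Q_n|$ the factor $n^{-1/2}$ in the exponential branch does not follow from Lemma \ref{L4.2vc} at level $n+6$ alone (since $2^{-k/(2|Q_{n+6}|)}$ is weaker than $2^{-k/(2|Q_n|)}$) nor from the volumetric bound alone; one combines the polynomial branch at $k\asymp|Q_n|$ with Corollary \ref{Corollary 32.1} applied inside the finite-dimensional space $\Tr(Q_n)$ equipped with the $L_p$ norm, which is exactly the role the paper assigns to that corollary.
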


\section{Hyperbolic cross polynomials, $p=\infty$}

We now discuss a more difficult and more interesting case $p=\infty$. Denote 
$$
\|f\|_A := \sum_\bk |{\hat f}(\bk)|,\quad {\hat f}(\bk):= (2\pi)^{-d}\int_{\T^d} f(\bx)e^{-i(\bk,\bx)}d\bx.
$$
The following Theorem \ref{T2.6} is from \cite{VT150} (see Theorem 2.6). We use it in this paper. 
Let as above
$$
\Pi(\bN,d) :=\bigl \{(a_1,\dots,a_d)\in \Z^d  : |a_j|\le N_j,\
j = 1,\dots,d \bigr\} ,
$$
where $N_j$ are nonnegative integers and $\bN:=(N_1,\dots,N_d)$. We denote
$$
\Tr(\bN,d):= \Tr(\Pi(\bN,d))=\{t:t = \sum_{\bk\in \Pi(\bN,d)} c_\bk e^{i(\bk,\bx)}\}.
$$
Then 
$$
\dim \Tr(\bN,d) = \prod_{j=1}^d (2N_j  + 1) =: \vartheta(\bN).
$$
For a nonnegative integer $m$ denote $\mb:= \max(m,1)$.

\begin{Theorem}\label{T2.6} There exist constructive greedy-type approximation methods $G^p_m(\cdot)$, which provide $m$-term polynomials with respect to $\Tr^d$ with the following properties: for $2\le p<\infty$
$$
\|f-G^p_m(f)\|_p \le C_1(d)(\mb)^{-1/2}p^{1/2}\|f\|_A, 
$$
  and for $p=\infty$, $f\in \Tr(\bN,d)$
$$
\|f-G^\infty_m(f)\|_\infty \le C_2(d)(\mb)^{-1/2}(\ln \vartheta(\bN))^{1/2}\|f\|_A.
$$
\end{Theorem}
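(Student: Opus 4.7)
\medskip\noindent The plan is to take $G_m^p$ to be the Weak Chebyshev Greedy Algorithm (WCGA) in $L_p$ applied to the trigonometric dictionary $\Tr^d=\{e^{i(\bk,\bx)}\}_{\bk\in\Z^d}$, which in the $L_\infty$ case I would restrict to $\{e^{i(\bk,\bx)}:\bk\in\Pi(\bN,d)\}$. Each iteration maximizes a linear functional over the dictionary, so the scheme is constructive; for $f$ with $\|f\|_A<\infty$ only countably many coordinates are relevant, and for $f\in\Tr(\bN,d)$ only those in $\Pi(\bN,d)$. The analysis rests on two classical ingredients: the general WCGA convergence theorem in uniformly smooth Banach spaces, and a Nikol'skii-type embedding that reduces the $L_\infty$ case to an $L_p$ case with $p$ of logarithmic size.

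\medskip\noindent For $2\le p<\infty$, I would use the fact that the modulus of smoothness of $L_p$ satisfies $\rho_{L_p}(u)\le (p-1)u^2/2$, so $L_p$ is uniformly smooth of power type $2$ with constant $\asymp p$. The standard WCGA convergence estimate in a Banach space $X$ with $\rho_X(u)\le\gamma u^q$, $q\in(1,2]$, says that for any $f$ representable as an absolutely convergent sum over the dictionary one has the residual bound $\ll\gamma^{1/q}(\mb)^{-(1-1/q)}\|f\|_{A_1(\mathcal D)}$, where $\|\cdot\|_{A_1(\mathcal D)}$ is the $\ell_1$ norm of the coefficients. For the (appropriately normalized) trigonometric dictionary $\|f\|_{A_1(\mathcal D)}\ll\|f\|_A$, and specializing to $q=2$, $\gamma\asymp p$ gives precisely the first bound $\|f-G_m^p(f)\|_p\ll p^{1/2}(\mb)^{-1/2}\|f\|_A$ claimed in the theorem.

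\medskip\noindent For $p=\infty$ and $f\in\Tr(\bN,d)$ the space $L_\infty$ is not uniformly smooth, so I would instead run the algorithm in $L_p$ with $p\asymp\ln\vartheta(\bN)$ against the dictionary restricted to $\Pi(\bN,d)$; then $f-G_m^\infty(f)$ stays in $\Tr(\bN,d)$. The decisive tool is the Nikol'skii-type inequality $\|t\|_\infty\le C(d)\vartheta(\bN)^{1/p}\|t\|_p$ for $t\in\Tr(\bN,d)$, which follows from the univariate Nikol'skii inequality by a tensor argument. Choosing $p=\ln\vartheta(\bN)$ makes $\vartheta(\bN)^{1/p}\asymp 1$ and converts the $p^{1/2}$ of the previous step into $(\ln\vartheta(\bN))^{1/2}$, which is exactly the second estimate. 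The main obstacle is preserving the sharp power-type-$2$ constant $\asymp p$ in the modulus of smoothness of $L_p$ (a classical consequence of Hanner's inequality); any cruder dependence on $p$ would inflate the logarithmic factor into a polynomial one in $\vartheta(\bN)$ and wreck the second bound. A minor check is that the WCGA rate is insensitive to restricting the dictionary to a finite subset containing the Fourier support of $f$, which is immediate from the proof of the convergence theorem.
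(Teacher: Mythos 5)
The paper does not prove this statement: it imports Theorem~\ref{T2.6} verbatim from \cite{VT150}, so there is no internal proof to compare against. Your outline is the standard argument behind that cited result --- run a weak greedy algorithm in the uniformly smooth space $L_p$ (power type $2$ with constant $\asymp p$) to get the $p^{1/2}(\mb)^{-1/2}\|f\|_A$ bound, then handle $p=\infty$ by working in $L_p$ with $p\asymp\ln\vartheta(\bN)$ against the dictionary restricted to $\Pi(\bN,d)$ and applying the Nikol'skii inequality $\|t\|_\infty\le C(d)\vartheta(\bN)^{1/p}\|t\|_p$ --- and it is correct modulo the routine point that the complex exponentials must be replaced by a real normalized dictionary (sines and cosines) before the real-Banach-space WCGA theory applies.
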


We will use a version of Theorem \ref{T2.6}, which follows from the proof of Theorem \ref{T2.6} in \cite{VT150} and a simple inequality 
$$
\|f\|_A \le \|f\|_2 (\#\{\bk:\hat f(\bk)\neq 0\})^{1/2}.
$$
\begin{Theorem}\label{T2.6'} Let $\Lambda\subset \Pi(\bN,d)$ with $N_j=2^n$, $j=1,\dots,d$. There exist constructive greedy-type approximation methods $G^\infty_m(\cdot)$, which provide $m$-term polynomials with respect to $\Tr^d$ with the following properties: 
\newline 
for $f\in \Tr(\Lambda)$ we have $G^\infty_m(f)\in \Tr(\Lambda)$ and 
$$
\|f-G^\infty_m(f)\|_\infty \le C_3(d)(\mb)^{-1/2}n^{1/2}|\Lambda|^{1/2}\|f\|_2.
$$
\end{Theorem}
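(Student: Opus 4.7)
The plan is to derive Theorem \ref{T2.6'} as a direct corollary of Theorem \ref{T2.6} combined with the Cauchy--Schwarz-type estimate $\|f\|_A \le \|f\|_2(\#\{\bk:\hat f(\bk)\neq 0\})^{1/2}$ mentioned in the text. The whole point is to trade the $\|f\|_A$-bound from Theorem \ref{T2.6} for an $\|f\|_2$-bound at the cost of a factor $|\Lambda|^{1/2}$.

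First I would apply Theorem \ref{T2.6} with $p=\infty$ to $f\in\Tr(\Lambda)\subset \Tr(\bN,d)$, obtaining the greedy approximant $G^\infty_m(f)$ with
$$\|f-G^\infty_m(f)\|_\infty \le C_2(d)(\mb)^{-1/2}(\ln\vartheta(\bN))^{1/2}\|f\|_A.$$
Since $N_j=2^n$ for all $j$, one has $\vartheta(\bN)=\prod_{j=1}^d(2\cdot 2^n+1)\le 2^{d(n+2)}$, so $(\ln\vartheta(\bN))^{1/2}\le (d(n+2)\ln 2)^{1/2}\ll n^{1/2}$ with an implied constant depending only on $d$.

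Second, I would establish the auxiliary inequality by Cauchy--Schwarz and Parseval. Writing $S:=\{\bk:\hat f(\bk)\neq 0\}$ and noting $S\subset \Lambda$ because $f\in\Tr(\Lambda)$, one gets
$$\|f\|_A = \sum_{\bk\in S}|\hat f(\bk)| \le |S|^{1/2}\Bigl(\sum_{\bk\in S}|\hat f(\bk)|^2\Bigr)^{1/2} \le |\Lambda|^{1/2}\|f\|_2.$$
Combining this with the bound from the previous step yields the claimed estimate.

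The one remaining point that I expect to be the main (though minor) obstacle is the assertion $G^\infty_m(f)\in\Tr(\Lambda)$. This is not a consequence of Theorem \ref{T2.6} as stated but must be read off from the construction of $G^\infty_m$ in \cite{VT150}: the greedy procedure selects frequencies from the Fourier support of the current residual, and since that support is contained in $\Lambda$ throughout the iteration (the dictionary elements are the pure exponentials $e^{i(\bk,\bx)}$, and residuals preserve the spectral support), the output polynomial lies in $\Tr(\Lambda)$. Once this invariance is verified the proof is complete.
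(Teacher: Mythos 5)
Your proposal is correct and follows essentially the same route as the paper, which derives Theorem \ref{T2.6'} precisely by combining the $p=\infty$ case of Theorem \ref{T2.6} (with $\ln \vartheta(\bN) \asymp n$) with the Cauchy--Schwarz bound $\|f\|_A \le \|f\|_2\,|\Lambda|^{1/2}$. You also correctly identify the only real subtlety --- that $G^\infty_m(f)\in\Tr(\Lambda)$ must be read off from the construction in \cite{VT150} rather than from the statement of Theorem \ref{T2.6}, which is exactly why the paper says the result follows from the \emph{proof} of that theorem.
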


We now prove the following lemma. Let $\D_n^2 := \{u_\bk:\bk\in   Q_n\} \cup \{e^{i(\bk,\bx)}: \bk\in   Q_n\}$.

\begin{Lemma}\label{L4.3} Let $1<q\le 2$. Then
$$
\sigma_m(\Tr(  Q_n)_q,\D_n^2)_\infty \ll n^{1/2}(|  Q_n|/m)^{1/q}.
$$
\end{Lemma}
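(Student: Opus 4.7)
\medskip\noindent\textbf{Proof plan for Lemma \ref{L4.3}.}
The plan is to split the $m$-term approximation into two stages, using both halves of the dictionary $\D_n^2$. In stage one I use the orthonormal $\U$-system to obtain a sharp $L_2$ approximant; in stage two I feed the residue into the constructive greedy exponential method of Theorem \ref{T2.6'} to convert the $L_2$ control into an $L_\infty$ estimate at the cost of a $\sqrt{n}$ factor. This is the $L_\infty$-analogue of the $L_q\to L_p$ scheme of Section 5, with the second stage doing the $p=2\to p=\infty$ upgrade.

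Stage one: given $f\in\Tr(Q_n)_q$ with $1<q\le 2$, apply Lemma \ref{L4.1Q} at $p=2$ to produce
$$
g=\sum_{\bk\in\Lambda_1}c_\bk u_\bk,\qquad \Lambda_1\subset Q_n,\quad |\Lambda_1|\le m_1,
$$
with $\|f-g\|_2\ll(|Q_n|/m_1)^{1/q-1/2}$. Since each $u_\bk$ with $\bk\in Q_n$ is supported in frequency in $Q_n$, the residue $h:=f-g$ again lies in $\Tr(Q_n)\subset\Tr(\Pi(\bN,d))$ with $N_j=2^n$. Stage two: invoke Theorem \ref{T2.6'} on $h$ with $\Lambda=Q_n$ to produce $G^\infty_{m_2}(h)\in\Tr(Q_n)$, a linear combination of at most $m_2$ exponentials $e^{i(\bk,\bx)}$ with $\bk\in Q_n$, satisfying
$$
\|h-G^\infty_{m_2}(h)\|_\infty\ll m_2^{-1/2}n^{1/2}|Q_n|^{1/2}\|h\|_2.
$$
Chaining the two bounds, the polynomial $g+G^\infty_{m_2}(h)$ is an $(m_1+m_2)$-term sum from $\D_n^2$, and choosing $m_1=m_2\asymp m/2$ gives
$$
\|f-g-G^\infty_{m_2}(h)\|_\infty\ll m^{-1/2}n^{1/2}|Q_n|^{1/2}(|Q_n|/m)^{1/q-1/2}\asymp n^{1/2}(|Q_n|/m)^{1/q},
$$
which is exactly the claim. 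Very small $m$ (where this bound already exceeds $\|f\|_\infty$) is handled by the trivial Nikol'skii bound $\|f\|_\infty\ll 2^{n/q}$ from Theorem \ref{NI}.

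I do not foresee a serious obstacle; the only bookkeeping point is to make sure that the stage-one residue genuinely lives in $\Tr(Q_n)$, so that the factor $|\Lambda|^{1/2}$ supplied by Theorem \ref{T2.6'} is $|Q_n|^{1/2}$ rather than something larger coming from the slightly enlarged frequency supports of individual $u_\bk$. This is automatic because every $u_\bk$ with $\bk\in\rho(\bs)\subset Q_n$ itself lies in $\Tr(Q_n)$ by construction. Conceptually, the lemma says that combining an orthonormal basis adapted to $L_p$ approximation with the plain exponentials on which the greedy $L_\infty$ machinery operates loses only a $\sqrt{n}$ factor compared to the formal $p=\infty$ extrapolation $(|Q_n|/m)^{1/q-1/p}$ of Lemma \ref{L4.1Q}.
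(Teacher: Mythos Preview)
Your proposal is correct and is essentially the same two-stage argument the paper gives: first apply Lemma \ref{L4.1Q} with $p=2$ and budget $[m/2]$ to obtain an $L_2$ bound on the residue, then feed the residue into Theorem \ref{T2.6'} with $\Lambda=Q_n$ and budget $[m/2]$ to upgrade to $L_\infty$ at the cost of the factor $n^{1/2}$. Your bookkeeping remark that the stage-one residue stays in $\Tr(Q_n)$ (because each $u_\bk$ with $\bk\in\rho(\bs)\subset Q_n$ has Fourier support in $\rho(\bs)$) is exactly the point that justifies taking $|\Lambda|=|Q_n|$ in Theorem \ref{T2.6'}.
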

\begin{proof} 
 Take $f\in \Tr(  Q_n)$. Applying first Lemma \ref{L4.1Q} with $p=2$ and $[m/2]$ and, then, applying 
Theorem \ref{T2.6'} with $\Lambda =   Q_n$ and $[m/2]$ we obtain
$$
\sigma_m(f,\D_n^2)_\infty \ll n^{1/2}(| Q_n|/m)^{1/q}\|f\|_q,
$$
which proves the lemma.
\end{proof}

Theorem \ref{T3.1}, Remark \ref{R3.1} and Lemma \ref{L4.3} imply the following lemma.

\begin{Theorem}\label{T4.4} Let $1<q\le 2$. Then
$$
\e_k(\Tr( Q_n)_q,L_\infty) \ll  \left\{\begin{array}{ll} n^{1/2}(| Q_n|/k)^{1/q} (\log (4| Q_n|/k))^{1/q}, &\quad k\le 2| Q_n|,\\
n^{1/2}2^{-k/(2| Q_n|)},&\quad k\ge 2| Q_n|.\end{array} \right.
$$
\end{Theorem}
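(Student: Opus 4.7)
The plan is to apply directly the two-step machinery: Lemma \ref{L4.3} provides the $m$-term approximation bound, and then Theorem \ref{T3.1} together with Remark \ref{R3.1} converts it into the desired entropy bound. This is exactly the strategy outlined in the paragraph preceding the statement.

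First I would renormalize the class so that the $m$-term approximation error satisfies the hypothesis of Theorem \ref{T3.1} with a clean rate. Let $c=c(d,q)$ denote the implicit constant in Lemma \ref{L4.3} and set
$$
F_n \;:=\; \bigl(c\, n^{1/2}|Q_n|^{1/q}\bigr)^{-1}\,\Tr(Q_n)_q,\qquad \D := \D_n^2,\qquad N := |\D_n^2|\asymp |Q_n|.
$$
Then Lemma \ref{L4.3} rewrites as $\sigma_m(F_n,\D)_\infty \le m^{-1/q}$ for all $m\le N$, which is precisely the assumption of Theorem \ref{T3.1} with $r=1/q$.

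Next, for the regime $k\le 2|Q_n|$, Theorem \ref{T3.1} yields
$$
\e_k(F_n,L_\infty)\;\le\; C(1/q)\left(\frac{\log(2N/k)}{k}\right)^{1/q},
$$
and unscaling by $c\, n^{1/2}|Q_n|^{1/q}$ produces the first branch of Theorem \ref{T4.4}. For the regime $k\ge 2|Q_n|$, I would invoke Remark \ref{R3.1}: the compact $F_n$ is contained in $\sp(\D)=\Tr(Q_n)$, a complex subspace of dimension $|Q_n|\asymp N$. Hence
$$
\e_k(F_n,L_\infty)\;\le\; C(1/q)\,N^{-1/q}\,2^{-k/(2N)},
$$
and again unscaling gives the second branch $\e_k(\Tr(Q_n)_q,L_\infty)\ll n^{1/2}2^{-k/(2|Q_n|)}$.

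I do not anticipate any genuine obstacle, since the two heavy ingredients — Theorem \ref{T2.6'} (used in the proof of Lemma \ref{L4.3}) and the abstract conversion Theorem \ref{T3.1} — are already in place. The only bookkeeping point worth care is that $N$ must simultaneously play two roles: the cardinality of the dictionary $\D_n^2$ (needed for Theorem \ref{T3.1}) and the complex dimension of its linear span (needed for Remark \ref{R3.1}). Both quantities are of order $|Q_n|$ because $\{u_\bk:\bk\in Q_n\}$ and $\{e^{i(\bk,\bx)}:\bk\in Q_n\}$ each span $\Tr(Q_n)$, so all implicit constants stay under control and the same exponential denominator $2|Q_n|$ appears in the second branch.
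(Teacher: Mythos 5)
Your proposal is correct and follows exactly the paper's own route: the paper derives Theorem \ref{T4.4} precisely by combining Lemma \ref{L4.3} with Theorem \ref{T3.1} and Remark \ref{R3.1}, and your normalization of $\Tr(Q_n)_q$ to fit the hypothesis $\sigma_m(F,\D)_X\le m^{-r}$ with $r=1/q$, together with the observation that both $|\D_n^2|$ and $\dim\sp(\D_n^2)$ are of order $|Q_n|$, is the right (and only) bookkeeping needed.
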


Let us discuss the case $q=1$, which is not covered by Theorem \ref{T4.4}. In this case we restrict ourselves to $d=2$. It is easy to see that $\V(Q_n) \subset \Tr(Q_n)$. Let $\V_n^2 := \{v_\bk:\bk\in   Q_n\} \cup \{e^{i(\bk,\bx)}: \bk\in   Q_{n}\}$. Then we have the following analog of Lemma \ref{L4.3}.

\begin{Lemma}\label{L4.3v} We have for $d=2$
$$
\sigma_m(\V(  Q_n)_1,\V_n^2)_\infty \ll n^{1/2}|Q_n|/m.
$$
\end{Lemma}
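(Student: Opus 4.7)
The plan is to mimic the two-step strategy used in Lemma \ref{L4.3}, but with the endpoint tool supplied by Theorem \ref{T2v}. In Lemma \ref{L4.3} one first reduced the $L_q$ approximation problem to an $L_2$ problem via Lemma \ref{L4.1Q} and then applied the greedy trigonometric $L_\infty$-approximation of Theorem \ref{T2.6'}. At $q=1$ the first reduction fails, because Marcinkiewicz-type inequalities for hyperbolic cross polynomials are not available in $L_1$; the Riesz-product estimate of Theorem \ref{T2v}, which holds in $d=2$, is precisely what replaces it.

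\textbf{Step 1: compression in $L_2$ using the $\V$-system.} Fix $f \in \V(Q_n)_1$, so $\|f\|_1 \le 1$. Theorem \ref{T2v} yields
$$\sum_{\bk \in Q_n^+} |f_\bk| \ll |Q_n|^{1/2}\|f\|_1 \ll |Q_n|^{1/2}.$$
Let $m_1:=[m/2]$. Order the coefficients $|f_\bk|$ in decreasing order and let $h_1$ be the sum of the $m_1$ largest $\V$-terms. Lemma \ref{Lqp} with $q=1$, $p=2$ gives
$$\|f - h_1\|_2 \ll m_1^{-1/2}\sum_{\bk \in Q_n^+}|f_\bk| \ll |Q_n|^{1/2}\, m^{-1/2},$$
using that $\V^2$ is orthonormal.

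\textbf{Step 2: greedy $L_\infty$-approximation using the trigonometric system.} The residual $f-h_1$ still lies in $\V(Q_n)$, and by (\ref{O4'}) its Fourier support is contained in a set $\Lambda \subset \Pi(\bN,2)$ with $N_j \asymp 2^n$ and $|\Lambda|\ll |Q_n|$. Apply Theorem \ref{T2.6'} to $f-h_1$ with this $\Lambda$ and with $m_2:=[m/2]$ greedy steps; this produces an $m_2$-term trigonometric polynomial $h_2$ (with frequencies in $\Lambda \subset Q_n$ up to an harmless constant shift) satisfying
$$\|f - h_1 - h_2\|_\infty \ll m_2^{-1/2}\, n^{1/2}|Q_n|^{1/2}\, \|f-h_1\|_2 \ll n^{1/2} |Q_n|\, m^{-1}.$$
Then $h_1+h_2$ is an $m$-term approximation to $f$ from $\V_n^2$, and the desired bound follows.

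\textbf{Main obstacle.} The nontrivial ingredient is Step 1: passing from $\|f\|_1\le 1$ to a quantitative control of the $\V$-coefficient $\ell_1$-norm. This is exactly what Theorem \ref{T2v} supplies, and its proof rests on the Riesz product construction in Lemma \ref{L2.1v}, which is currently available only for $d=2$. That is the reason the lemma is restricted to $d=2$; all other ingredients (Lemma \ref{Lqp} and Theorem \ref{T2.6'}) are dimension-free.
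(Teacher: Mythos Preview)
Your proof is correct and follows essentially the same approach as the paper. The paper invokes Lemma~\ref{L4.1v1} with $p=2$ for Step~1, but that lemma at $p=2$ is exactly what you spell out directly: Theorem~\ref{T2v} to bound $\sum_{\bk}|f_{\bk}|\ll|Q_n|^{1/2}$, followed by the orthonormality of $\V^2$ and Lemma~\ref{Lqp} with $q=1$, $p=2$ to obtain the $L_2$-residual bound $\ll(|Q_n|/m)^{1/2}$; Step~2 is identical, and the paper applies Theorem~\ref{T2.6'} with $\Lambda=Q_n$ directly (using $\V(Q_n)\subset\Tr(Q_n)$), so your ``harmless constant shift'' is not actually needed.
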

\begin{proof} 
 Take $f\in \V(  Q_n)$. Applying first Lemma \ref{L4.1v1} with $p=2$ and $[m/2]$ and, then, applying 
Theorem \ref{T2.6'} with $\Lambda =   Q_{n}$ and $[m/2]$ we obtain
$$
\sigma_m(f,\V_n^2)_\infty \ll n^{1/2}(| Q_n|/m)\|f\|_q,
$$
which proves the lemma.
\end{proof}
In the same way as above we derive the following result on the entropy numbers from Lemma \ref{L4.3v}.

\begin{Theorem}\label{T4.4v} We have for $d=2$
$$
\e_k(\Tr( Q_n)_1,L_\infty) \ll  \left\{\begin{array}{ll} n^{1/2}(| Q_n|/k) \log (4| Q_n|/k), &\quad k\le 2| Q_n|,\\
n^{1/2}2^{-k/(2| Q_n|)},&\quad k\ge 2| Q_n|.\end{array} \right.
$$
\end{Theorem}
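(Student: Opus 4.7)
The plan is to apply the general two-step scheme from Section 4 exactly as was done to derive Theorem \ref{T4.4} from Lemma \ref{L4.3}, combined at the end with the transfer mechanism (using $\Tr(Q_n)\subset \V(Q_{n+6})$) that was used to obtain Theorem \ref{T4.2v} from the corresponding $\V$-based statements. Specifically, I will feed the $m$-term approximation bound of Lemma \ref{L4.3v} into Theorem \ref{T3.1} and Remark \ref{R3.1}, and then use the embedding to pass from $\V(Q_n)_1$ to $\Tr(Q_n)_1$.

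First, normalize: by Lemma \ref{L4.3v}, the class $F:=(Cn^{1/2}|Q_n|)^{-1}\V(Q_n)_1$ satisfies $\sigma_m(F,\V_n^2)_\infty \le m^{-1}$ for an appropriate constant $C$, and $F$ sits inside the $N$-dimensional space $\sp(\V_n^2)$ with $N\asymp |Q_n|$. Applying Theorem \ref{T3.1} with $r=1$ and rescaling back gives
\[
\e_k(\V(Q_n)_1,L_\infty) \ll n^{1/2}\,\frac{|Q_n|}{k}\,\log(4|Q_n|/k),\quad k\le 2|Q_n|,
\]
while Remark \ref{R3.1} (with $r=1$ and the observation that $F$ is contained in an $N$-dimensional subspace) yields the exponential tail
\[
\e_k(\V(Q_n)_1,L_\infty) \ll n^{1/2}\, 2^{-k/(2|Q_n|)},\quad k\ge 2|Q_n|.
\]

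Second, transfer from $\V$ to $\Tr$. Since $\Tr(Q_n)\subset \V(Q_{n+6})$ (the same inclusion invoked in the derivation of Theorem \ref{T4.2v}), we have $\Tr(Q_n)_1 \subset \V(Q_{n+6})_1$, and therefore $\e_k(\Tr(Q_n)_1,L_\infty) \le \e_k(\V(Q_{n+6})_1,L_\infty)$. Applying the two bounds above with $n$ replaced by $n+6$ and using $(n+6)^{1/2}\asymp n^{1/2}$, $|Q_{n+6}|\asymp |Q_n|$ produces exactly the statement of Theorem \ref{T4.4v}.

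There is essentially no real obstacle in this final step: all of the substantive work has been absorbed into Lemma \ref{L4.3v}, which combined the wavelet-type $L_2$ approximation bound of Lemma \ref{L4.1v1} (itself resting on the Riesz-product estimate Theorem \ref{T1lv} for $d=2$) with the greedy construction of Theorem \ref{T2.6'} (the latter contributing the $n^{1/2}$ factor that reflects the logarithmic dependence on the dimension of the ambient polynomial space). The present deduction is then a purely mechanical application of the general inequalities of Section 4, and the only small care needed is the passage through $\V(Q_{n+6})_1$, whose parameters are $\asymp$-equivalent to those of $\Tr(Q_n)_1$.
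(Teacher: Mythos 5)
Your proposal is correct and follows exactly the route the paper intends: the paper's one-line derivation ("in the same way as above") is precisely the combination of Lemma \ref{L4.3v} with Theorem \ref{T3.1} and Remark \ref{R3.1}, followed by the transfer through the inclusion $\Tr(Q_n)\subset \V(Q_{n+6})$ that was already invoked for Theorems \ref{T4.2v} and \ref{T4.2vc}. Your normalization, the identification $N\asymp|Q_n|$ for the dictionary $\V_n^2$, and the observation that the parameters of $\V(Q_{n+6})_1$ are $\asymp$-equivalent to those of $\Tr(Q_n)_1$ fill in the details faithfully.
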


\section{The upper bounds for function classes}

 In this section we provide a proof of the upper bounds in Theorems \ref{T2.1} -- \ref{T4.6}, except Theorem \ref{T1.5}, formulated in the Introduction. The proof uses Theorem \ref{T2.1G}, which is a general result on the   $\bar \bW^{a,b}_X\{X_n\}$. We apply this theorem in the case $X=L_q$, $1\le q\le \infty$, $X_n=\Tr(Q_n)$.   An important ingredient of Theorem \ref{T2.1G} is the assumption that subspaces $\{X_n\}$ satisfy (\ref{EA}). 
 The main work of this paper is devoted to establishing (\ref{EA}) in the case $X_n=\Tr(Q_n)$, $X=L_q$, $Y=L_p$, for different parameters $1\le q,p\le\infty$. We now indicate which results are used to obtain the appropriate versions of the (\ref{EA}) needed for the proof of the upper bounds in the corresponding theorem. 
 
 {\bf Proof of Theorem \ref{T2.1}.} The case $p=1$ follows from Remark \ref{R=}. The case $1<p<\infty$ follows from Theorem \ref{T4.2v}, which provides (\ref{EA}) with $\al=0$, $\bt=\ga=1-1/p$.
 
 {\bf Proof of Theorem \ref{T2.3}.}   It follows from Theorem \ref{T4.4v}, which provides (\ref{EA}) with $\al=1/2$, $\bt=\ga=1$.

{\bf Proof of Theorem \ref{T1.6}.}   It follows from Theorem \ref{T4.2vc}, which provides (\ref{EA}) with $\al=-1/2$, $\bt=\ga=1-1/p$ for $p\in [2,\infty)$.

 {\bf Proof of Theorem \ref{T1.7}.} It follows from Remark \ref{R=}. 
 
 {\bf Proof of Theorem \ref{T4.6}.}   It follows from Theorem \ref{T4.4}, which provides (\ref{EA}) with $\al=1/2$, $\bt=\ga=1/q$.

\section{Some properties of classes $\bW^{a,b}_q$ and $\bar\bW^{a,b}_q$}

  In this section we give some embedding type properties of classes $\bW^{a,b}_q$ and $\bar\bW^{a,b}_q$. As we already mentioned above we have the following two properties.

\begin{Proposition}\label{P5.1} For $1<q<\infty$ the classes $\bW^{a,b}_q$ and $\bar\bW^{a,b}_q$ are equivalent. 
\end{Proposition}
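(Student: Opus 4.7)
The plan is to prove the two embeddings $\bW^{a,b}_q \hookrightarrow \bar\bW^{a,b}_q$ and $\bar\bW^{a,b}_q \hookrightarrow \bW^{a,b}_q$ (both up to multiplicative constants depending only on $q$ and $d$) and to do this with the help of the Littlewood--Paley decomposition~(\ref{tag1.20}), which is available precisely for the range $1<q<\infty$.

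For the embedding $\bW^{a,b}_q \hookrightarrow \bar\bW^{a,b}_q$, given $f\in \bW^{a,b}_q$, I would set $t_n := f_n = \sum_{\|\bs\|_1 = n} \delta_\bs(f)$. Clearly $t_n\in \Tr(Q_n)$ (in fact $t_n\in \Tr(\Delta Q_n)$), and by the defining inequality of $\bW^{a,b}_q$ we have $\|t_n\|_q \le 2^{-an}\bar n^{(d-1)b}$. That the series $\sum_n t_n$ converges to $f$ in $L_q$ follows from the Littlewood--Paley relation (\ref{tag1.20}) together with the hypothesis $a>0$, which makes the partial sums Cauchy. Hence $f\in \bar\bW^{a,b}_q$ and this direction is essentially tautological.

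For the converse embedding $\bar\bW^{a,b}_q \hookrightarrow \bW^{a,b}_q$, let $E_l$ denote the hyperbolic-layer projector $E_l g := \sum_{\|\bs\|_1 = l}\delta_\bs(g)$. The key observation is that, by the Littlewood--Paley equivalence (\ref{tag1.20}) for $1<q<\infty$,
\[
\|E_l g\|_q \asymp \bigl\|\bigl(\textstyle\sum_{\|\bs\|_1=l}|\delta_\bs(g)|^2\bigr)^{1/2}\bigr\|_q
\le \bigl\|\bigl(\sum_\bs |\delta_\bs(g)|^2\bigr)^{1/2}\bigr\|_q \asymp \|g\|_q,
\]
so $E_l : L_q \to L_q$ is bounded uniformly in $l$, with a constant $C(q,d)$. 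Now if $f = \sum_{n\ge 1} t_n$ with $t_n\in \Tr(Q_n)$ and $\|t_n\|_q \le 2^{-an}n^{(d-1)b}$, then $E_l(t_n) = 0$ for $n < l$ because the frequencies of $t_n$ lie in $Q_n \subset Q_{l-1}$ in that case. Therefore $f_l = E_l(f) = \sum_{n\ge l} E_l(t_n)$, and the triangle inequality combined with the uniform bound on $E_l$ gives
\[
\|f_l\|_q \le C(q,d)\sum_{n\ge l} \|t_n\|_q \le C(q,d)\sum_{n\ge l} 2^{-an} n^{(d-1)b}.
\]
Since $a>0$, this geometric-type tail sum is dominated (up to a constant depending only on $a,b,d$) by its first term $2^{-al}\bar l^{(d-1)b}$, yielding $f\in C\cdot\bW^{a,b}_q$.

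The only nontrivial ingredient is the uniform-in-$l$ boundedness of $E_l$ on $L_q$; this is precisely where the restriction $1<q<\infty$ is essential, and it is the step I expect to require the most care. The endpoints $q=1,\infty$ fail because the Littlewood--Paley inequalities (\ref{tag1.18})--(\ref{tag1.20}) are not available there; this is consistent with the remark in the paper that at $q=1$ the class $\bar\bW^{a,b}_1$ is strictly larger than $\bW^{a,b}_1$.
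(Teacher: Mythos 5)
Your proof is correct. The paper itself does not prove Proposition \ref{P5.1}; it simply cites \cite{VT152}, so there is no in-paper argument to compare against, but your route --- the trivial direction by taking $t_n=f_n$, and the converse via the uniform $L_q$-boundedness of the layer projections $E_l$ obtained from the Littlewood--Paley equivalence (\ref{tag1.20}) together with the observation that $E_l t_n=0$ for $n<l$ and summability of the tail $\sum_{n\ge l}2^{-an}n^{(d-1)b}\ll 2^{-al}\bar l^{(d-1)b}$ --- is exactly the standard argument one expects here. The only cosmetic point is the index $l=0$: the representation in $\bar\bW^{a,b}_q$ starts at $n=1$, so $f_0$ should be absorbed into $t_1$, which only affects the equivalence constant.
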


\begin{Proposition}\label{P5.2} For $1<q<\infty$ we have $\bW^r_{q,\alpha} \hookrightarrow \bW^{r,0}_q$. 
\end{Proposition}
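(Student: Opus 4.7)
The plan is to show that whenever $f=F_r(\cdot,\alpha)*\varphi$ with $\|\varphi\|_q\le 1$, the dyadic block $f_l=\sum_{\|\bs\|_1=l}\delta_\bs(f)$ satisfies $\|f_l\|_q\lesssim 2^{-rl}$ uniformly in $l$, which is precisely the containment in (a constant multiple of) $\bW^{r,0}_q$. The argument rests on two pillars: the Littlewood-Paley equivalence (\ref{tag1.20}), which is available precisely because $1<q<\infty$, and the Marcinkiewicz multiplier theorem.

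First I would compute $\hat F_r(\bk)$ for $\bk\in\rho(\bs)$. Because the univariate Bernoulli kernel satisfies $\hat F_r(k,\alpha)=|k|^{-r}e^{\mp i\alpha\pi/2}$ for $k\ne 0$ and $\hat F_r(0,\alpha)=1$, and because $s_j\ge 1$ iff $k_j\ne 0$ on $\rho(\bs)$, the tensor-product structure of $F_r(\bx,\alpha)$ yields the key factorization
\[
\hat F_r(\bk) \;=\; 2^{-r\|\bs\|_1}\,m_\bs(\bk),\qquad \bk\in\rho(\bs),
\]
where $m_\bs$ is the tensor product of univariate symbols $\mu_{s_j}(k_j):=2^{rs_j}|k_j|^{-r}e^{\mp i\alpha_j\pi/2}$ on $\{2^{s_j-1}\le|k_j|<2^{s_j}\}$ (with $\mu_0(0)=1$). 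Each $\mu_{s_j}$ is uniformly bounded and its discrete derivative is of size $\lesssim 2^{-s_j}$, so its total variation over the relevant band is bounded independently of $s_j$ --- exactly the hypothesis of Marcinkiewicz.

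Let $T_\bs$ denote the multiplier operator with symbol $m_\bs\cdot\mathbf{1}_{\rho(\bs)}$, so that $\delta_\bs(f)=2^{-rl}T_\bs(\delta_\bs(\varphi))$ whenever $\|\bs\|_1=l$. Applying (\ref{tag1.20}) to $f_l$ and pulling out the $2^{-rl}$ factor gives
\[
\|f_l\|_q \;\asymp\; \left\|\Bigl(\sum_{\|\bs\|_1=l}|\delta_\bs(f)|^2\Bigr)^{1/2}\right\|_q \;=\; 2^{-rl}\left\|\Bigl(\sum_{\|\bs\|_1=l}|T_\bs\delta_\bs(\varphi)|^2\Bigr)^{1/2}\right\|_q.
\]
The finishing step is a vector-valued Marcinkiewicz square-function bound
\[
\left\|\Bigl(\sum_{\bs}|T_\bs g_\bs|^2\Bigr)^{1/2}\right\|_q \;\lesssim\; \left\|\Bigl(\sum_{\bs}|g_\bs|^2\Bigr)^{1/2}\right\|_q,
\]
applied with $g_\bs=\delta_\bs(\varphi)$; Littlewood-Paley in the reverse direction then identifies the right-hand side with $\|\varphi\|_q\le 1$, completing the bound $\|f_l\|_q\lesssim 2^{-rl}$.

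The main obstacle is this last inequality: one needs uniformity in $\bs$ together with the square-function extension. For a single block this is the standard multidimensional Marcinkiewicz theorem for the tensor-product symbol $m_\bs$, but the vector-valued version, though well known, needs justification --- one convenient route is a Khintchine randomization that collapses the square function to a single scalar multiplier to which the classical theorem applies; another is to invoke the UMD property of $L_q$ (valid for $1<q<\infty$) together with Banach-space-valued Fourier multiplier theory. Once this uniform estimate is in hand, the proposition follows at once, and moreover the same argument shows that the embedding constant depends only on $r$, $q$ and $d$.
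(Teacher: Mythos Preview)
The paper does not actually prove Proposition~\ref{P5.2}: it is stated in Section~9 as one of two properties that ``we already mentioned above,'' and in the Introduction it is simply declared ``well known.'' So there is no paper proof to compare against; your task was to supply the standard argument, and you have done so correctly.

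One remark that removes the obstacle you flag at the end. You worry about the vector-valued (square-function) Marcinkiewicz bound for the family $\{T_\bs\}$. This can be sidestepped entirely by observing that all the block multipliers $m_\bs$ are restrictions of a \emph{single} global Marcinkiewicz multiplier: define $\sigma(\bk):=\hat F_r(\bk)\prod_{j=1}^d \nu(k_j)^r$, where $\nu(k)=2^{s}$ for $2^{s-1}\le |k|<2^{s}$ and $\nu(0)=1$. Then $\sigma(\bk)=m_\bs(\bk)$ for $\bk\in\rho(\bs)$, and $\sigma$ satisfies the Marcinkiewicz condition uniformly (each factor $(\nu(k_j)/|k_j|)^r$ is bounded with bounded variation on dyadic blocks). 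Writing $M$ for the associated operator, you get $T_\bs\delta_\bs(\varphi)=\delta_\bs(M\varphi)$, whence
\[
\Bigl\|\Bigl(\sum_{\|\bs\|_1=l}|T_\bs\delta_\bs(\varphi)|^2\Bigr)^{1/2}\Bigr\|_q
=\Bigl\|\Bigl(\sum_{\|\bs\|_1=l}|\delta_\bs(M\varphi)|^2\Bigr)^{1/2}\Bigr\|_q
\ll \|M\varphi\|_q \ll \|\varphi\|_q,
\]
using only the scalar Marcinkiewicz theorem and the ordinary Littlewood--Paley inequality~(\ref{tag1.20}). No randomization or UMD machinery is needed. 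With this adjustment your proof is complete and clean.
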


The Nikol'skii type inequalities from Theorem \ref{NI} imply the following embeddings.

\begin{Proposition}\label{P5.3} For $1\le q<p<\infty$ and $a>\bt:=1/q-1/p$ we have $\bW^{a,b}_{q} \hookrightarrow \bW^{a-\bt,b}_p$ and $\bar\bW^{a,b}_{q} \hookrightarrow \bar\bW^{a-\bt,b}_p$. 
\end{Proposition}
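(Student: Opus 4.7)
The plan is to apply the Nikol'skii-type inequality (Theorem \ref{NI}) block by block. Both embeddings are essentially the same idea: each dyadic building block lives in $\Tr(Q_n)$ (or $\Tr(\Delta Q_n)\subset\Tr(Q_n)$), so passing from the $L_q$ norm to the $L_p$ norm costs at most the factor $2^{\bt n}$, which is exactly absorbed into the smoothness parameter.

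First I would handle the $\bar\bW^{a,b}_q$ case, which is the cleaner of the two. Given $f\in \bar\bW^{a,b}_q$ with decomposition $f=\sum_{n\ge 1}t_n$, $t_n\in\Tr(Q_n)$, $\|t_n\|_q\le 2^{-an}n^{b(d-1)}$, Theorem \ref{NI} yields
\[
\|t_n\|_p \le C(q,p,d)\,2^{\bt n}\|t_n\|_q \le C(q,p,d)\,2^{-(a-\bt)n}n^{b(d-1)}.
\]
Since $a-\bt>0$ by hypothesis, the series $\sum_n t_n$ converges absolutely in $L_p$, and the same decomposition exhibits $f$ (up to the constant $C(q,p,d)$) as an element of $\bar\bW^{a-\bt,b}_p$.

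Next I would do the $\bW^{a,b}_q$ case. For $f\in\bW^{a,b}_q$ and each $l\in\N_0$, the block $f_l=\sum_{\|\bs\|_1=l}\delta_\bs(f)$ has Fourier spectrum inside $\Delta Q_l\subset Q_l$, hence $f_l\in\Tr(Q_l)$. Applying Theorem \ref{NI} with $n=l$ gives
\[
\|f_l\|_p \le C(q,p,d)\,2^{\bt l}\|f_l\|_q \le C(q,p,d)\,2^{-(a-\bt)l}(\bar l)^{(d-1)b},
\]
which means $\|f_l\|_p\cdot 2^{(a-\bt)l}(\bar l)^{-(d-1)b}\le C(q,p,d)$ uniformly in $l$, i.e.\ $f\in C(q,p,d)\cdot \bW^{a-\bt,b}_p$. (The assumption $a>\bt$ again ensures $f\in L_p$ via $\sum_l\|f_l\|_p<\infty$.)

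There is no real obstacle here; both implications are a one-line application of the Nikol'skii inequality to each dyadic piece, combined with the fact that the smoothness exponent $a$ is \emph{shifted by exactly $\bt$} under this operation. The only minor point to verify explicitly is that each block belongs to $\Tr(Q_n)$ so that Theorem \ref{NI} is applicable, and that $a>\bt$ guarantees summability of the resulting $L_p$ bounds on the blocks.
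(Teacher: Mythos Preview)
Your proof is correct and is exactly the argument the paper has in mind: the paper does not spell out a proof of Proposition~\ref{P5.3} but simply states that it follows from the Nikol'skii-type inequalities of Theorem~\ref{NI}, and your block-by-block application of that theorem to $f_l\in\Tr(Q_l)$ (resp.\ $t_n\in\Tr(Q_n)$) is precisely that deduction.
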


It is well known (see \cite{Tmon} and \cite{TBook}) that for classes $\bW^r_{q,\alpha}$ and other classes with mixed smoothness approximation by the hyperbolic cross trigonometric polynomials $\Tr(Q_n)$ plays the same role as approximation by classical trigonometric polynomials plays in approximation of the univariate smoothness classes. There are two classical ways to describe compact sets, which are of interest in approximation theory and numerical analysis (see, for instance, \cite{D}). The first way to describe such classes uses the notion of smoothness. A typical example of such classes are classes $\bW^r_{q,\alpha}$. 
The second way to describe such classes is through approximation. In our context these would be the classes
$$
{\mathbf A}^{a,b}_q := \{f\in L_q: E_{Q_n}(f)_q \le 2^{-an}n^{(d-1)b},\quad n=1,2,\dots,\quad \|f\|_q\le 1\},
$$
where
$$
E_{Q_n}(f)_q := \inf_{t\in \Tr(Q_n)}\|f-t\|_q
$$ 
is the best approximation of $f$ by the hyperbolic cross polynomials from $\Tr(Q_n)$ in the $L_q$ norm. 

\begin{Proposition}\label{P5.4} For $1\le q\le\infty$ classes $\bar\bW^{a,b}_q$ and ${\mathbf A}^{a,b}_q$ are equivalent.
\end{Proposition}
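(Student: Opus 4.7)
The plan is to prove equivalence by establishing the two standard embeddings $\bar\bW^{a,b}_q \hookrightarrow C_1 \mathbf{A}^{a,b}_q$ and $\mathbf{A}^{a,b}_q \hookrightarrow C_2 \bar\bW^{a,b}_q$, using the decomposition and telescoping techniques standard for Bernstein/Jackson-type characterizations. Throughout I assume $a>0$, which is implicit for the class to be meaningful.

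For the embedding $\bar\bW^{a,b}_q \hookrightarrow C_1 \mathbf{A}^{a,b}_q$, take $f \in \bar\bW^{a,b}_q$ with representation $f = \sum_{n=1}^\infty t_n$, $t_n \in \Tr(Q_n)$, $\|t_n\|_q \le 2^{-an} n^{(d-1)b}$. The partial sum $S_N := \sum_{n=1}^N t_n$ lies in $\Tr(Q_N)$, so
\[
E_{Q_N}(f)_q \le \|f - S_N\|_q \le \sum_{n=N+1}^\infty \|t_n\|_q \le \sum_{n=N+1}^\infty 2^{-an} n^{(d-1)b} \ll 2^{-aN} N^{(d-1)b},
\]
using the fact that $a>0$ makes the geometric tail comparable to its first term. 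A similar bound gives $\|f\|_q \le \sum_{n\ge 1}\|t_n\|_q \le C$, so $C_1^{-1} f \in \mathbf{A}^{a,b}_q$ for some constant $C_1$ depending only on $a,b,d$.

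For the reverse embedding $\mathbf{A}^{a,b}_q \hookrightarrow C_2 \bar\bW^{a,b}_q$, take $f \in \mathbf{A}^{a,b}_q$. Since $\Tr(Q_n)$ is finite-dimensional, a best (or near-best) approximation $\phi_n \in \Tr(Q_n)$ with $\|f - \phi_n\|_q \le 2 E_{Q_n}(f)_q \le 2 \cdot 2^{-an} \bar n^{(d-1)b}$ exists. Set $\phi_0 := 0$ and define $t_n := \phi_n - \phi_{n-1}$, which lies in $\Tr(Q_n)$ because $\Tr(Q_{n-1}) \subset \Tr(Q_n)$. By the triangle inequality
\[
\|t_n\|_q \le \|f - \phi_n\|_q + \|f - \phi_{n-1}\|_q \le 2\cdot 2^{-an}\bar n^{(d-1)b} + 2\cdot 2^{-a(n-1)}\overline{(n-1)}{}^{(d-1)b} \ll 2^{-an} \bar n^{(d-1)b}.
\]
Since $\|f - \phi_N\|_q \to 0$ as $N \to \infty$, the telescoping sum $\sum_{n=1}^N t_n = \phi_N$ converges to $f$ in $L_q$, so $f = \sum_{n\ge 1} t_n$ and after dividing by the constant hidden in $\ll$ we obtain $C_2^{-1} f \in \bar\bW^{a,b}_q$.

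There is no real obstacle here; the only delicate points are (i) using existence of a best approximation in the finite-dimensional subspace $\Tr(Q_n)$ (valid for all $1 \le q \le \infty$), (ii) handling the $n=1$ term, which is covered by the convention $\phi_0 = 0$ and the notation $\bar n := \max(n,1)$ already adopted in the paper, and (iii) the convergence $\phi_N \to f$ in $L_q$, which follows directly from $E_{Q_N}(f)_q \to 0$, itself forced by $a>0$. Thus both inclusions hold with constants depending only on $a$, $b$, and $d$, proving the equivalence.
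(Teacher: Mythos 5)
Your proof is correct and follows essentially the same route as the paper: the first embedding via tail sums of the representation, and the second via telescoping differences of (near-)best approximants from $\Tr(Q_n)$. The only cosmetic difference is that the paper takes exact best approximations $u_n$ and sets $t_1:=u_1$ rather than introducing $\phi_0:=0$, which changes nothing of substance.
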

\begin{proof} Let $f\in \bar\bW^{a,b}_q$. Then by the definition of this class $f$ has a representation
$$
f=\sum_{j=1}^\infty t_j, \quad t_j\in \Tr(Q_j), \quad \|t_j\|_q \le 2^{-aj} j^{b(d-1)}.
$$
Therefore,
$$
E_{Q_n}(f)_q \le \|\sum_{j>n} t_j\|_q \ll 2^{-an} n^{b(d-1)},
$$
which implies $\bar\bW^{a,b}_q \hookrightarrow {\mathbf A}^{a,b}_q$. 
Let now $f\in {\mathbf A}^{a,b}_q$ and let $u_n\in \Tr(Q_n)$ be such that
$$
\|f-u_n\|_q = E_{Q_n}(f)_q.
$$
Define
$$
t_1:=u_1,\quad t_n := u_n -u_{n-1},\quad n=2,3,\dots.
$$
Then 
$$
f=\sum_{n=1}^\infty t_n, \quad t_n\in \Tr(Q_n), 
$$
with
$$
\|t_1\|_q \le 2, \quad \|t_n\|_q \le 3 (2^{-an} n^{b(d-1)}),\quad n=2,3,\dots, 
$$
 which implies ${\mathbf A}^{a,b}_q \hookrightarrow \bar\bW^{a,b}_q$.
 \end{proof}
 
 Here is a nontrivial embedding type inequality, which is a direct corollary of Lemma 3.3 of Chapter 1 from \cite{Tmon}.
 
 \begin{Proposition}\label{P5.5} For $1\le q<p<\infty$ we have
 $$
 E_{Q_n}(f)_p \le C(q,p,d) \left(\sum_{j>n} 2^{pj\bt}E_{Q_n}(f)_q^p\right)^{1/p}.
 $$
 \end{Proposition}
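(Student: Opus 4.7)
The plan is to combine a blockwise Nikol'skii inequality with Littlewood-Paley summation. First, I would construct near-best $L_q$ approximations $t_j \in \Tr(Q_j)$ with $\|f - t_j\|_q \le 2 E_{Q_j}(f)_q$, chosen so that the differences $h_j := t_j - t_{j-1}$ have spectra localized in the hyperbolic layer $\Delta Q_j$. For $1 < q < \infty$ one may take $t_j = S_{Q_j}(f)$, the hyperbolic-cross partial Fourier sum; at $q = 1$ one uses instead a smoothed de la Vallée-Poussin projector onto $\Tr(Q_j)$, uniformly bounded on $L_1$, in the spirit of the operators built from the system $\V^d$ of Section 6. Telescoping then gives $f - t_n = \sum_{j > n} h_j$ in $L_q$ with $\|h_j\|_q \ll E_{Q_{j-1}}(f)_q$, so that $E_{Q_n}(f)_p \le \|f - t_n\|_p$.

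Second, I would apply Nikol'skii (Theorem \ref{NI}) blockwise: for each $\bs$ with $\|\bs\|_1 = j$, the piece $\delta_\bs(h_j) \in \Tr(\rho(\bs))$ satisfies $\|\delta_\bs(h_j)\|_p \ll 2^{\bt j} \|\delta_\bs(h_j)\|_q$ where $\bt = 1/q - 1/p$. Inserting this bound into the Littlewood-Paley inequalities (\ref{tag1.18})--(\ref{tag1.19}) for $\|\sum_{j > n} h_j\|_p$ converts the $L_p$-norm of the tail into a double sum of $\|\delta_\bs(h_j)\|_p$'s. Regrouping by $j$ and applying Littlewood-Paley on $h_j$ in $L_q$, namely $(\sum_{\|\bs\|_1 = j} \|\delta_\bs(h_j)\|_q^2)^{1/2} \ll \|h_j\|_q \ll E_{Q_{j-1}}(f)_q$, collapses the inner layer sum and produces the required $\ell_p$-sum with general term $2^{\bt j} E_{Q_{j-1}}(f)_q$; a shift of index recovers the statement.

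The main obstacle is the endpoint case $q = 1$: neither the hyperbolic-cross partial Fourier projection nor the $L_q$-side Littlewood-Paley identity is available at that endpoint. To remedy this, one must replace Dirichlet-type blocks by smooth blocks (such as those given by $\V^d$) that are $L_1$-bounded and admit analogous square-function inequalities. This is precisely the role of Lemma~3.3 of Chapter~1 in \cite{Tmon}, which is invoked as a black box here; a proof from scratch at $q = 1$ would require verifying the $L_1$-side Littlewood-Paley for a smooth-block system along the lines of the Riesz-product arguments of Theorems \ref{T1v}--\ref{T2v} in Section~6. A secondary technicality is the slight mismatch between $\ell_2$ and $\ell_p$ in the Littlewood-Paley bound for $p \ge 2$, which is handled either by interpolation or by the trivial layerwise embedding, since $|\{\bs : \|\bs\|_1 = j\}| \asymp j^{d-1}$ is only polynomial in $j$ while the factor $2^{\bt j}$ is exponential.
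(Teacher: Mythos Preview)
The paper does not give a proof of this proposition at all; it simply records it as ``a direct corollary of Lemma~3.3 of Chapter~1 from \cite{Tmon}.'' You likewise defer to that same lemma for the endpoint $q=1$, so in that sense your proposal and the paper agree: both treat the statement as a consequence of the monograph result rather than as something argued from scratch here.

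Your sketch for $1<q<p<\infty$ is in the right spirit---telescoping into layer pieces $h_j$, Nikol'skii on each dyadic block, and Littlewood--Paley to reassemble---but the issue you label a ``secondary technicality'' is more serious than you indicate. For $q\le 2\le p$ the combination of the right inequality in \eqref{tag1.18} with blockwise Nikol'skii and the left inequality in \eqref{tag1.19} yields
\[
\|f-t_n\|_p \ll \Bigl(\sum_{j>n} 2^{2\bt j}\|h_j\|_q^2\Bigr)^{1/2},
\]
an $\ell_2$-sum rather than the claimed $\ell_p$-sum. Since $p>2$ implies $\|\cdot\|_{\ell_p}\le\|\cdot\|_{\ell_2}$, the $\ell_p$-bound does \emph{not} follow from the $\ell_2$-bound. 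Your proposed remedy---``polynomial in $j$ vs.\ exponential factor $2^{\bt j}$''---would at best trade the exponent mismatch for extra polynomial factors in $j$, which are not present in the stated inequality. Similarly, in the range $q<p\le 2$ the right inequality in \eqref{tag1.19} gives an outer $\ell_p$-sum over $\bs$, but then you need $(\sum_{\|\bs\|_1=j}\|\delta_\bs(h_j)\|_q^p)^{1/p}\ll\|h_j\|_q$, which is not one of the Littlewood--Paley inequalities listed. Getting the clean $\ell_p$-sum in $j$ with no loss requires a sharper argument (for instance a duality or interpolation step, or the vector-valued form of Littlewood--Paley), and that is precisely what the cited Lemma~3.3 supplies. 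So your outline correctly identifies the ingredients and the right reference, but as written it does not close the gap between the $\ell_2$ and $\ell_p$ exponents.
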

 
 We formulate one more nontrivial inequality (see Lemma 4.3 in \cite{VT152}), which might be useful in the further study of 
 classes $\bW^{a,b}_1$ and $\bar\bW^{a,b}_1$.
 
 \begin{Lemma}\label{L5.1} Let $1<p<\infty$. For any $\epsilon >0$ there exists a constant $C(\epsilon,d,p)$ such that for each $t\in \Tr(Q_n)$ we have
$$
\sum_{\|\bs\|_1\le n}\|\delta_\bs(t)\|_p \le C(\epsilon,d,p)n^\epsilon 2^{\bt n} \|t\|_1,\quad \bt=1-1/p.
$$
\end{Lemma}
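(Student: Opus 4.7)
The plan is to prove Lemma~\ref{L5.1} by duality, reducing it to a Nikol'skii-type bound for block-structured sums of hyperbolic cross polynomials.

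First, by the duality between the block $\ell^1(L_p)$ and block $\ell^\infty(L_{p'})$ norms,
$$
\sum_{\|\bs\|_1\le n}\|\delta_\bs(t)\|_p = \sup\left\{\left\langle t,\, \sum_{\|\bs\|_1\le n} g_\bs\right\rangle : g_\bs\in\Tr(\rho(\bs)),\ \|g_\bs\|_{p'}\le 1\right\}.
$$
Bounding the pairing by $\|t\|_1\,\|\sum_\bs g_\bs\|_\infty$ reduces the claim to the Nikol'skii-type inequality
$$
\left\|\sum_{\|\bs\|_1\le n} g_\bs\right\|_\infty \le C(\epsilon,d,p)\, n^\epsilon 2^{\bt n}
$$
for every such family $\{g_\bs\}$.

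Next, I would group the blocks into hyperbolic layers $G_l := \sum_{\|\bs\|_1=l} g_\bs\in\Tr(\Delta Q_l)$. Since $g_\bs\in\Tr(\rho(\bs))$ and $\|g_\bs\|_{p'}\le 1$, the standard Nikol'skii inequality on the parallelepiped $\rho(\bs)$ (Theorem~\ref{NI}) gives $\|g_\bs\|_\infty \le C\, 2^{\|\bs\|_1/p'}=C\,2^{\bt l}$. A crude triangle inequality, summed over the $\asymp l^{d-1}$ indices in layer $l$ and then over $l\le n$, already yields
$$
\left\|\sum_{\|\bs\|_1\le n} g_\bs\right\|_\infty \le C\, n^{d-1} 2^{\bt n},
$$
which establishes the lemma with the exponent $d-1$ in place of $\epsilon$.

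The main obstacle, and the heart of the argument, is sharpening the prefactor $n^{d-1}$ down to $n^\epsilon$. This requires exploiting cancellation between blocks rather than summing their individual $L_\infty$ norms. A natural plan is to mimic the Riesz-product construction of Section~6 (Lemmas~\ref{L2.1v} and~\ref{L2.3v}): build, layer by layer and residue class by residue class modulo a parameter $a$, a dual polynomial of the form $\mathrm{Im}\prod_\bs (1+i\theta_l g_\bs)$ with $\theta_l$ chosen so that after extracting the imaginary part and projecting onto the relevant hyperbolic shell only the linear term $\sum_\bs g_\bs$ survives, while the $L_\infty$ norm of the Riesz product remains bounded by $(1+l/a)^{O(1)}$ (cf.\ Remark~\ref{R2.1v}). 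Summing over the $a$ residue classes costs a factor $a$, and optimizing $a$ against $l$ reduces the total loss to a $(\log n)^{O(1)}$ factor that is absorbed into $n^\epsilon$. The technical difficulty is extending the $d=2$ Riesz-product analysis of Section~6 to arbitrary $d$, which is known to introduce serious combinatorial complications of the same nature as those obstructing the Small Ball Inequality in higher dimensions (as highlighted in the discussion following Theorem~\ref{T1.6}); this is precisely where the constant $C(\epsilon,d,p)$ is forced to blow up as $\epsilon\to 0$.
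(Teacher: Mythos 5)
First, note that the paper does not actually prove Lemma \ref{L5.1}; it is quoted from Lemma 4.3 of \cite{VT152}, so there is no in-paper argument to compare against. Judged on its own terms, your proposal has a genuine gap at the very first step. The duality identity is fine (up to constants, using the uniform $L_{p'}$-boundedness of $\delta_\bs$ for $1<p'<\infty$), but the estimate $\langle t,\sum_\bs g_\bs\rangle\le\|t\|_1\,\|\sum_\bs g_\bs\|_\infty$ discards the crucial information that $t\in\Tr(Q_n)$, and the target inequality $\|\sum_{\|\bs\|_1\le n}g_\bs\|_\infty\ll n^\epsilon 2^{\bt n}$ is simply false for $d\ge 2$: take $g_\bs:=c\,2^{-\|\bs\|_1/p}\sum_{\bk\in\rho(\bs)}e^{i(\bk,\bx)}$, so that $\|g_\bs\|_{p'}\asymp 1$ while $g_\bs(0)\asymp 2^{\bt\|\bs\|_1}$; since all these block Dirichlet kernels peak positively at the origin, $\bigl(\sum_{\|\bs\|_1\le n}g_\bs\bigr)(0)\asymp\sum_{l\le n}l^{d-1}2^{\bt l}\asymp n^{d-1}2^{\bt n}$. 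So your crude bound $n^{d-1}$ is sharp for the quantity you reduced to, and no amount of work can push it to $n^\epsilon$ along this route. The correct dual quantity, obtained by Hahn--Banach applied to the functional $t\mapsto\langle t,\sum_\bs g_\bs\rangle$ on the subspace $\Tr(Q_n)\subset L_1$, is $E^\perp_{Q_n}\bigl(\sum_\bs g_\bs\bigr)_\infty$, i.e.\ the distance in $L_\infty$ to $\Tr(Q_n)^\perp$ --- which is exactly what a Riesz product controls, and which can be much smaller than the $L_\infty$ norm itself.

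Even after this repair, the second half of the plan does not deliver the stated bound. The Riesz-product machinery of Section 6 is developed only for $d=2$, and the paper emphasizes that its extension to $d>2$ is an open problem of the same nature as the higher-dimensional Small Ball Inequality; since Lemma \ref{L5.1} is asserted for all $d$, an argument resting on that machinery cannot prove it. Moreover, the quantitative claim that ``optimizing $a$ against $l$'' reduces the loss to $(\log n)^{O(1)}$ is not correct: for the top layer $l=n$ one must sum over $a$ residue classes, each costing $(1+n/a)^{1/2}$ by Remark \ref{R2.1v} and Lemma \ref{L2.2v}, and $\min_{a\ge 6}\,a(1+n/a)^{1/2}\asymp n^{1/2}$; so even for $d=2$ this scheme yields at best $n^{1/2}2^{\bt n}\|t\|_1$ (compare Lemma \ref{L2.3v} and the proof of Theorem \ref{T2v}), not $n^\epsilon 2^{\bt n}\|t\|_1$. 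The $n^\epsilon$ in the lemma has to come from a different mechanism (the cited proof in \cite{VT152} does not proceed through Riesz products), so both the reduction and the proposed main engine need to be replaced.
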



\begin{thebibliography}{9999}


\bibitem{Bass} R.F. Bass,  Probability estimates for multiparameter Brownian processes, {\em Ann. Probab.}, {\bf 16} (1988),     251--264.

\bibitem{Bel}  E.S. Belinsky,  Estimates of entropy numbers and Gaussian measures for classes of functions with bounded mixed derivative, {\em J. Approx. Theory}, {\bf 93} (1998),   114-127.

\bibitem{D} R.A. DeVore, Nonlinear approximation {\em Acta Numerica}
{\bf 7} (1998), 51--150.

\bibitem{DD}  Ding Dung,  Approximation of multivariate functions by means of harmonic analysis, {\em Hab. Dissertation} (1985), Moscow, MGU.

\bibitem{DTU} Ding Dung, V.N. Temlyakov, and T. Ullrich, Hyperbolic Cross Approximation, arXiv:1601.03978v1 [math.NA] 15 Jan 2016.

\bibitem{G}  E.D. Gluskin,  Extremal properties of orthogonal parallelepipeds and their applications to the geometry of Banach spaces,  
Mat. Sb., {\bf 136} (1988),  85--96.

\bibitem{H}  K. H{\" o}llig  Diameters of classes of smooth functions 
 in Quantitative Approximation, Academic Press, New York, 1980  163--176



\bibitem{K1}  B.S. Kashin,  On certain properties of the space of trigonometric
polynomials with the uniform norm,  Trudy Mat. Inst. Steklov,  {\bf145}  (1980),  111--116;
 English transl. in  Proc. Steklov Inst. Math.,  {\bf 145}  (1981).
 
\bibitem{KT1} B. S. Kashin and V. N. Temlyakov,  On best $m$-terms
approximations and the entropy of sets in the space $L^1$,  Mat. Zametki,
 {\bf 56}  (1994),  57--86; 
 English transl. in Math. Notes,  {\bf 56} (1994),  1137--1157.
 
 \bibitem{KTE2} B.S. Kashin and V.N. Temlyakov, Estimate of approximate characteristics for classes of functions with bounded mixed derivative, {\em Math. Notes}, {\bf 58} (1995), 1340--1342.

\bibitem{KaTe03} B.S. Kashin and V.N. Temlyakov, The volume estimates and their
applications, {\em East J. Approx.}, {\bf 9}  (2003), 469--485.




 \bibitem{KL}  J. Kuelbs and W.V. Li,   Metric entropy and the small ball problem for Gaussian measures, {\em   J. Functional Analysis}, {\bf  116} (1993),  133--157.
 
\bibitem{LiTs}  M.A. Lifshits and B.S. Tsirelson, Small deviations of Gaussian fields, {\em Teor. Probab. Appl.}, {\bf 31} (1986),      557--558.

\bibitem{M}  V.E. Maiorov  On various widths of the class $H^r_p$ in the space $L_q$  Izv. Akad. Nauk SSSR Ser. Mat.  {\bf 42} (1978),  773--788;
 English transl. in  Math. USSR-Izv.  {\bf 13} (1979).
 
 \bibitem{OO} D. Offin and K. Oskolkov, A note on orthonormal polynomial bases and wavelets, Constructive Approx. {\bf 9} (1993), 319--325.


  \bibitem{Schu}   C. Sch{\" u}tt, Entropy numbers of diagonal operators between symmetric Banach spaces,
{\em J. Approx. Theory }, {\bf 40} (1984), 121--128.

\bibitem{Smo} S.A. Smolyak,  The $\epsilon$-entropy of the classes $E^{\alpha k}_s(B)$ and $W^\alpha_s(B)$ in the metric $L_2$, {\em Dokl. Akad. Nauk SSSR}, {\bf 131} (1960),    30--33.

\bibitem{TaE}  M. Talagrand, The small ball problem for the Brownian sheet, {\em Ann. Probab.}, {\bf 22} (1994), 1331--1354.

\bibitem{Tem3} V.N Temlyakov, Approximation of periodic functions of several
variables with bounded mixed difference,  Mat. Sb., {\bf 133} (1980),
 65--85;
  English transl. in  Math. USSR Sbornik {\bf 41} (1982).


\bibitem{Tmon} V.N. Temlyakov, Approximation of functions with bounded mixed derivative, Trudy MIAN, {\bf 178} (1986), 1--112. English transl. in Proc. Steklov Inst. Math., {\bf 1} (1989).

\bibitem{TE1}  V.N. Temlyakov, On estimates of $\epsilon$-entropy and widths of classes of functions with bounded mixed derivative or difference, {\em Dokl. Akad. Nauk SSSR}, {\bf 301}   (1988),    288--291;
  English transl. in {\em Soviet Math. Dokl.}, {\bf 38},    
84--87.

\bibitem{TE2}  V.N. Temlyakov, Estimates of the asymptotic characteristics of classes of functions with bounded mixed derivative or difference, {\em Trudy Matem. Inst. Steklov}, {\bf 189}   (1989),    138--168;
  English transl. in {\em Proceedings of the Steklov Institute of Mathematics}, 1990,   Issue 4, 
161--197.  

\bibitem{T3}  V.N. Temlyakov,  Bilinear Approximation and Related Questions,  Trudy Mat. Inst. Steklov,  {\bf 194}  (1992),  229--248;
 English transl. in  Proc. Steklov Inst.
of Math.,  {\bf 4}  (1993),  245-265.

\bibitem{T4}  V.N. Temlyakov,  Estimates of Best Bilinear Approximations of Functions and
Approximation Numbers of Integral Operators,  Mat. Zametki, {\bf 51} (1992),  125--134;
 English transl. in  Math. Notes, {\bf 51}  (1992),
 510-517.
 
  \bibitem{TBook}  V.N. Temlyakov, {\em Approximation of periodic functions}, 
Nova Science Publishes, Inc., New York.,  1993.

\bibitem{TE3}  V.N. Temlyakov, An inequality for trigonometric polynomials and its application for estimating the entropy numbers, {\em J. Complexity}, 
{\bf 11}   (1995),     293--307.

\bibitem{TE5}  V.N. Temlyakov, Some inequalities for multivariate Haar polynomials, {\em East J. Approx.}, {\bf 1}  (1995),     61--72.

\bibitem{TE4}  V.N. Temlyakov, On two problems in the multivariate approximation, {\em East J. Approx.}, {\bf 4}  (1998),     505--514.

 


\bibitem{VT69} V.N. Temlyakov, Greedy Algorithms with Regards to Multivariate Systems with Special Structure, Constructive Approximation, {\bf 16} (2000), 399--425.

\bibitem{Tbook} V.N. Temlyakov, Greedy approximation, Cambridge University
Press, 2011.

\bibitem{VT138} V.N. Temlyakov, An inequality for the entropy numbers and its application,
J. Approximation Theory, {\bf 173} (2013), 110--121.

\bibitem{VT150} V.N. Temlyakov, Constructive sparse trigonometric approximation and other problems for functions with mixed smoothness, Matem. Sb. {\bf 206} (2015), 131--160;  arXiv: 1412.8647v1 [math.NA] 24 Dec 2014, 1--37. 

\bibitem{VT152} V.N. Temlyakov, Constructive sparse trigonometric approximation for functions with small mixed smoothness, arXiv:1503.0282v1 [math.NA] 1 Mar 2015.

\bibitem{Tr} H. Triebel, Bases in Function Spaces, Sampling, Discrepancy, Numerical Integration, European Mathematical Society, Germany, 2010.

\end{thebibliography}
\end{document}